\newcommand{\footremember}[2]{%
    \footnote{#2}
    \newcounter{#1}
    \setcounter{#1}{\value{footnote}}%
}
\newcommand{\footrecall}[1]{%
    \footnotemark[\value{#1}]%
}
\theoremstyle{definition}
\newtheorem{defin}{Definition}
\newtheorem{remark}[defin]{Remark}
\newtheorem*{const*}{Construction}
\theoremstyle{plain}
\newtheorem{lem}[defin]{Lemma}
\newtheorem{thm}[defin]{Theorem}
\newtheorem{cor}[defin]{Corollary}
\newcommand{\D}{\mathcal{D}}
\title{On the Metric Dimensions for Sets of Vertices \footnote{Parts of this article have been presented in the 9th Slovenian International Conference on Graph Theory (Bled, 2019) and the Bordeaux Graph Workshop (Bordeaux, 2019).}}
\author{%
  Anni Hakanen \footnote{Corresponding author, email: anehak@utu.fi. Research partially funded by the Magnus Ehrnrooth foundation.}  \footremember{TY}{Department of Mathematics and Statistics, University of Turku, Turku, Finland}%
  \and Ville Junnila \footrecall{TY}%
  \and Tero Laihonen \footrecall{TY}%
  \and Mar\'{i}a Luz Puertas \footnote{Partially supported by grants MINECO MTM2015-63791-R and RTI2018-095993-B-I00.} \footremember{alm}{Department of Mathematics, Universidad de Almer\'{i}a, Almer\'{i}a, Spain}%
  }
\begin{document}

\maketitle

\abstract{Resolving sets were originally designed to locate vertices of a graph one at a time.
For the purpose of locating multiple vertices of the graph simultaneously, $\{\ell\}$-resolving sets 
were recently introduced. In this paper, we present new results regarding the $\{\ell\}$-resolving sets of a graph. In addition to proving general results, we consider $\{2\}$-resolving sets in rook's graphs and connect them to block designs. We also introduce the concept of $\ell$-solid-resolving sets, which is a natural generalisation of solid-resolving sets. We prove some general bounds and characterisations for $\ell$-solid-resolving sets and show how $\ell$-solid- and $\{\ell\}$-resolving sets are connected to each other. In the last part of the paper, we focus on the infinite graph family of flower snarks. We consider the $\ell$-solid- and $\{\ell\}$-metric dimensions of flower snarks. 
In two proofs regarding flower snarks, we use a new computer-aided reduction-like approach. \\ 
\textbf{Keywords:} resolving set, metric dimension, resolving several objects, block design, rook's graph, flower snark.}

\section{Introduction}

The graphs we consider are undirected and simple. They are also connected and finite unless otherwise stated. The vertex set of a graph $G$ is denoted by $V(G)$ or simply by $V$ if the graph in question is clear from context. The \emph{distance} between vertices $v$ and $u$, denoted by $d(v,u)$, is the length of any shortest path between $v$ and $u$.

Consider a graph $G$ with vertices $V$. Let $S = \{s_1, \ldots, s_k\} \subseteq V$ be nonempty. The \emph{distance array} of vertex $v\in V$ with respect to the set $S$ is defined as $\D_S (v) = (d(s_1,v),\ldots,d(s_k,v))$. If no two vertices have the same distance array, the set $S$ is called a \emph{resolving set} of $G$. This concept was introduced independently by Slater \cite{S:leavesTree} and Harary and Melter \cite{Harary76}. Resolving sets have applications in robot navigation \cite{Khuller96} and network discovery and verification \cite{Beerliova06}, for example. For some recent developments, see \cite{EstradaLexi16, KangFrac18, KelencEdge18}.

Resolving sets can be used to locate vertices of a graph one at a time. Our research focuses on how we can locate multiple vertices simultaneously. To that end, let us define the \emph{distance array} of a vertex set $X \subseteq V$ with respect to $S = \{s_1, \ldots, s_k\} \subseteq V$ as 
\begin{align*}
\D_S (X) = (d(s_1,X),\ldots, d(s_k,X)),
\end{align*}
where $d(s_i, X) = \min_{x \in X} \{ d(s_i,x) \}$ for all $s_i \in S$. For any singleton set $\{v\} \subseteq V$ we naturally have $\D_S (\{v\}) = \D_S (v)$. The following definition was introduced in \cite{Laihonen16}.

\begin{defin}\label{def:lresolving}
Let $\ell \geq 1$ be an integer. The set $S \subseteq V(G)$  is an $\{\ell\}$\emph{-resolving set} of $G$, if for all distinct nonempty sets $X,Y \subseteq V(G)$ such that $|X| \leq \ell$ and $|Y| \leq \ell$ we have $\D_S (X) \neq \D_S (Y)$.
\end{defin}

When $\ell = 1$, Definition \ref{def:lresolving} is equivalent to the definition of a resolving set.

Consider the graph $H$ illustrated in Figure \ref{fig:intro}. 
The set $R_1 = \{v_2,v_3,v_7\}$ is a $\{1\}$-resolving set of $H$. The vertex $v_6$ and the set $X= \{v_8,v_9\}$ have the same distance array $\D_{R_1} (v_6) = (2,3,1) = \D_{R_1} (X)$ with respect to the set $R_1$. Thus, the set $R_1$ cannot distinguish $X$ from $v_6$. The set $R_2 = \{v_1,v_2,v_3,v_4,v_8,v_9\}$ is a $\{2\}$-resolving set of $H$, and with it we can distinguish $X$ from $v_6$. Indeed, we have $\D_{R_2} (v_6) = (3,2,3,2,2,2)$ and $\D_{R_2} (X) = (3,2,3,2,0,0)$. Moreover, we can uniquely determine the elements of $X$ using $\D_{R_2} (X)$.

We can also distinguish $v_6$ from $X$ with another type of resolving sets introduced in \cite{Solid}. The set $S \subseteq V$ is a \emph{solid-resolving set} of a graph $G$ if for all $v \in V$ and nonempty $X \subseteq V$ we have $\D_S (v) \neq \D_S (X)$. For example, the set $S_1 = \{v_1,v_2,v_3,v_7,v_8\}$ is a solid-resolving set of the graph $H$. Indeed, we have $\D_{S_1} (v_6) = (3,2,3,1,2)$ and $\D_{S_1} (X) = (3,2,3,1,0)$. Solid-resolving sets give unique distance arrays to all vertices. However, some sets of vertices with at least two elements may share the same distance array. Let $Y = \{v_6,v_8\}$. Now $\D_{S_1} (X) = (3,2,3,1,0) = \D_{S_1} (Y)$, and thus the set $S_1$ is not a $\{2\}$-resolving set of $H$. 

The concept of solid-resolving sets can be generalised for larger sets of vertices. Consider again the graph $H$. We want to be able to distinguish sets with up to two vertices as with a $\{2\}$-resolving set, but we want to also distinguish sets with up to two vertices from sets with three or more vertices. In other words, the aim is to locate the elements of sets with up to two vertices and detect if a set contains at least three vertices. Our $\{2\}$-resolving set $R_2$ can do the former but not the latter; the sets $U = \{v_5,v_7\}$ and $W=\{v_5,v_6,v_7\}$ have the same distance array $\D_{R_2} (U) = (2,1,2,1,1,1) = \D_{R_2} (W)$. As a solution to this problem, we now present the following generalisation of solid-resolving sets.

\begin{figure}
\centering
\begin{subfigure}[b]{0.45\linewidth}
 \centering
 \begin{tikzpicture}[scale=0.8]
 	\draw (0,0) -- (1,1) -- (2.5,1) -- (3.5,.5) -- (3.5,-.5) -- (2.5,-1) -- (1,-1) -- (0,0) -- (1,0) -- (1,1);
 	\draw (1,0) -- (1,-1);
 	\draw (2.5,1) -- (2.5,-1);
	\draw \foreach \x in {(1,0),(1,1),(2.5,-1)}{
		\x node[circle, draw, fill=black,
                        inner sep=0pt, minimum width=6pt] {}
	};
	\draw \foreach \x in {(0,0),(1,-1),(2.5,0),(2.5,1),(3.5,.5),(3.5,-.5)}{
		\x node[circle, draw, fill=white,
                        inner sep=0pt, minimum width=6pt] {}
	};
 	\draw {
 		(-.4,0) node[] {$v_1$}
 		(1,1.4) node[] {$v_2$}
 		(1.4,0) node[] {$v_3$}
 		(1,-1.4) node[] {$v_4$}
 		(2.5,1.4) node[] {$v_5$}
 		(2.9,0) node[] {$v_6$}
 		(2.5,-1.4) node[] {$v_7$}
 		(3.9,.5) node[] {$v_8$}
 		(3.9,-.5) node[] {$v_9$}
 	};
 \end{tikzpicture}
 \caption{The set $R_1$.}
\end{subfigure}
\hfill
\begin{subfigure}[b]{0.45\linewidth}
 \centering
 \begin{tikzpicture}[scale=0.8]
 	\draw (0,0) -- (1,1) -- (2.5,1) -- (3.5,.5) -- (3.5,-.5) -- (2.5,-1) -- (1,-1) -- (0,0) -- (1,0) -- (1,1);
 	\draw (1,0) -- (1,-1);
 	\draw (2.5,1) -- (2.5,-1);
	\draw \foreach \x in {(0,0),(1,0),(1,1),(1,-1),(3.5,.5),(3.5,-.5)}{
		\x node[circle, draw, fill=black,
                        inner sep=0pt, minimum width=6pt] {}
	};
	\draw \foreach \x in {(2.5,0),(2.5,1),(2.5,-1)}{
		\x node[circle, draw, fill=white,
                        inner sep=0pt, minimum width=6pt] {}
	};
 	\draw {
 		(-.4,0) node[] {$v_1$}
 		(1,1.4) node[] {$v_2$}
 		(1.4,0) node[] {$v_3$}
 		(1,-1.4) node[] {$v_4$}
 		(2.5,1.4) node[] {$v_5$}
 		(2.9,0) node[] {$v_6$}
 		(2.5,-1.4) node[] {$v_7$}
 		(3.9,.5) node[] {$v_8$}
 		(3.9,-.5) node[] {$v_9$}
 	};
 \end{tikzpicture}
 \caption{The set $R_2$.}
\end{subfigure}
\caption{The graph $H$ with a $\{1\}$-resolving set and a $\{2\}$-resolving set.}\label{fig:intro}
\end{figure}
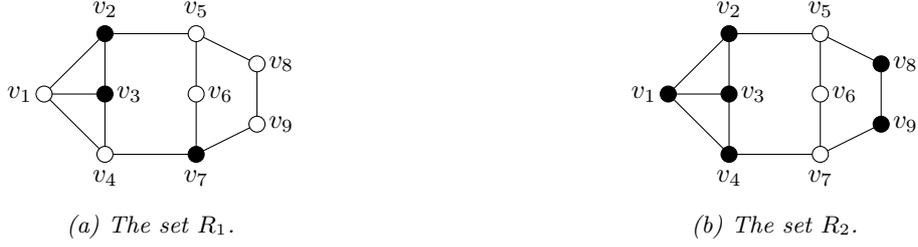

\begin{defin}\label{def:lsolid}
Let $\ell \geq 1$ be an integer. The set $S \subseteq V(G)$  is an $\ell$\emph{-solid-resolving set} of $G$, if for all distinct nonempty sets $X,Y \subseteq V(G)$ such that $|X| \leq \ell$ we have $\D_S (X) \neq \D_S (Y)$.
\end{defin}

When $\ell=1$, the previous definition is exactly the same as the definition of a solid-resolving set in \cite{Solid}. The set $S_2 = \{ v_1,v_2,v_3,v_4,v_6,v_8,v_9 \}$ is a 2-solid-resolving set of $H$. We can distinguish the sets $U$ and $W$ from each other using $S_2$ since $\D_{S_2} (U) = (2,1,2,1,1,1,1)$ and $\D_{S_2} (W) = (2,1,2,1,0,1,1)$.

The difference between Definitions \ref{def:lresolving} and \ref{def:lsolid} is significant but subtle; the set $Y$ can have any cardinality in Definition \ref{def:lsolid}, but in Definition \ref{def:lresolving}, we have the restriction $|Y| \leq \ell$.
If a set $S$ satisfies Definition \ref{def:lsolid} for some $\ell\geq 1$, then $S$ also satisfies Definition \ref{def:lresolving} for the same $\ell$. However, an $\{\ell\}$-resolving set is not necessarily an $\ell$-solid-resolving set (as we saw in the graph $H$).

Since $V(G)$ is an $\ell$-solid-resolving set of $G$ for any $\ell \in \{ 1, \ldots , |V(G)| \}$, it is clear that an $\ell$-solid-resolving set exists for any graph $G$ and any integer $\ell \in \{ 1, \ldots , |V(G)| \}$. Similarly, for any $G$ and $\ell \in \{1, \ldots , |V(G)|\}$ the set $V(G)$ is an $\{\ell\}$-resolving set. Therefore, we focus on determining the minimum cardinality of an $\ell$-solid- or $\{\ell\}$-resolving set of a graph.

The $\{\ell\}$\emph{-metric dimension} of $G$, denoted by $\beta_\ell (G)$, is the minimum cardinality of an $\{\ell\}$-resolving set of $G$. An $\{\ell\}$-resolving set of cardinality $\beta_\ell (G)$ is called an \emph{$\{\ell\}$-metric basis} of $G$.
Similarly, the $\ell$\emph{-solid-metric dimension} of $G$, denoted by $\beta_\ell^s (G)$, is the minimum cardinality of an $\ell$-solid-resolving set of $G$. An $\ell$-solid-resolving set of cardinality $\beta_\ell^s (G)$ is called an \emph{$\ell$-solid-metric basis} of $G$.

We explore the basic properties of $\ell$-solid- and $\{\ell\}$-resolving sets in Section 2. In Section 3, we prove a general lower bound on the $\ell$-solid-metric dimension of a graph and characterise the graphs that attain this bound. In Section 4, we consider Cartesian products of graphs. In particular, we consider the rook's graph $K_m \Box K_n$, and it turns out that the $\{2\}$-metric dimension of a rook's graph is connected to combinatorial designs. Finally, in Section 5, we consider the $\ell$-solid- and $\{\ell\}$-metric dimensions of flower snarks. 
The structure of a flower snark allows us to prove bounds on the 1-solid- and $\{2\}$-metric dimensions by using a new reduction-like approach. We also point out and correct an error in a proof in \cite{ImranFlower} regarding the $\{1\}$-metric dimension of a flower snark.

\section{General Results}

\subsection{The Connection Between $\ell$-Solid- and $\{\ell\}$-Resolving Sets}

The following theorem gives a characterisation for $\ell$-solid-resolving sets. Compared to Definition \ref{def:lsolid}, this characterisation provides a significantly easier way to verify that a set is an $\ell$-solid-resolving set.

\begin{thm}\label{thm:superb}
Let $S \subseteq V$ and $\ell \geq 1$. The set $S$ is an $\ell$-solid-resolving set of $G$ if and only if for all $x \in V$ and nonempty $Y \subseteq V$ such that $x \notin Y$ and $|Y| \leq \ell$ there exists an element $s \in S$ such that
\begin{equation}\label{eq:superb}
d(s,x) < d(s,Y).
\end{equation}
\end{thm}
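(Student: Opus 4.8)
The plan is to prove both directions of the biconditional by relating the defining condition of an $\ell$-solid-resolving set (Definition \ref{def:lsolid}) to the local separation condition \eqref{eq:superb}. The key observation driving both directions is the elementary fact that for any $s \in S$ and any sets $X, Y$, we have $d(s, X \cup Y) = \min\{d(s,X), d(s,Y)\}$, together with the monotonicity consequence that $d(s, X) \le d(s, Y)$ whenever $Y \subseteq X$. In particular, adding elements to a set can only decrease (or preserve) its distance from $s$.

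For the forward direction, I would prove the contrapositive: suppose the condition \eqref{eq:superb} fails, so there exist some $x \in V$ and nonempty $Y \subseteq V$ with $x \notin Y$, $|Y| \le \ell$, such that $d(s,x) \ge d(s,Y)$ for every $s \in S$. I would then exhibit two distinct sets with the same distance array, one of which has cardinality at most $\ell$, contradicting that $S$ is $\ell$-solid-resolving. The natural candidates are $Y$ and $Y \cup \{x\}$. Since $x \notin Y$ these sets are distinct, and $|Y| \le \ell$ satisfies the cardinality restriction in Definition \ref{def:lsolid} (where only the first set is constrained in size). For each $s \in S$ we have $d(s, Y \cup \{x\}) = \min\{d(s,Y), d(s,x)\} = d(s,Y)$, the last equality because $d(s,x) \ge d(s,Y)$. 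Hence $\D_S(Y) = \D_S(Y \cup \{x\})$, which is the desired contradiction.

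For the reverse direction, assume \eqref{eq:superb} holds for all eligible $x$ and $Y$, and let $X, Y$ be distinct nonempty sets with $|X| \le \ell$; I must show $\D_S(X) \ne \D_S(Y)$. Since $X \ne Y$, without loss of generality there is an element $x$ lying in the symmetric difference. I expect the clean case to be $x \in X \setminus Y$, so that $x \notin Y$ and $|Y|$ is unconstrained — but this is exactly where care is needed, since the hypothesis \eqref{eq:superb} requires $|Y| \le \ell$, not $|X| \le \ell$. The fix is to apply the hypothesis with the roles arranged correctly: I would pick $x \in X \setminus Y$ (using $|X| \le \ell$ to bound the set containing $x$) and apply \eqref{eq:superb} with the pair being $x$ together with a subset of size at most $\ell$ — here one instead takes $x \in X$ and uses the hypothesis applied to $x$ and the set $X \setminus \{x\}$ or argues directly that some $s \in S$ separates $x$ from $Y$. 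Concretely, I would choose $x \in X$ with $x \notin Y$; since $x \in X$ and $|X| \le \ell$, the set $X \setminus \{x\}$ has size at most $\ell - 1$, but the relevant application is to bound $d(s, Y)$ against $d(s, x)$, so I apply \eqref{eq:superb} to the pair $(x, Y)$ only when $|Y| \le \ell$.

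The main obstacle, and the step demanding the most care, is precisely this asymmetry in the cardinality restrictions between the two definitions. In Definition \ref{def:lsolid} only $X$ is required to satisfy $|X| \le \ell$, whereas in \eqref{eq:superb} the bounded set is $Y$. To reconcile these I would, in the reverse direction, take the distinguishing element $x$ from the set of size at most $\ell$: if $X \not\subseteq Y$ choose $x \in X \setminus Y$, and apply \eqref{eq:superb} with this $x$ and with the bounded set being a suitable size-$\le\ell$ set; if instead $X \subsetneq Y$ then some $y \in Y \setminus X$ exists and one shows $d(s, X) > d(s, Y)$ is impossible to avoid at some coordinate by using monotonicity $d(s,Y) \le d(s, X)$ for all $s$ together with strict inequality somewhere. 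I anticipate needing to split into the cases $X \setminus Y \ne \emptyset$ and $X \subseteq Y$ (hence $X \subsetneq Y$), handling each by the appropriate application of the separation hypothesis, and verifying that in each case some coordinate $s$ yields $d(s,X) \ne d(s,Y)$.
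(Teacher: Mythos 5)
Your forward direction is correct and is exactly the paper's argument: if \eqref{eq:superb} fails for some pair $(x,Y)$ with $x \notin Y$ and $|Y| \leq \ell$, then $\D_S(Y) = \D_S(Y \cup \{x\})$, and these are distinct sets with the smaller one of cardinality at most $\ell$, contradicting Definition \ref{def:lsolid}.

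The reverse direction, however, has a genuine gap, and it sits exactly at the asymmetry you flagged. Your plan splits on whether $X \setminus Y \neq \emptyset$ or $X \subsetneq Y$, and in the first case you propose to pick $x \in X \setminus Y$ and apply \eqref{eq:superb} to the pair $(x,Y)$ --- which, as you yourself concede, is legitimate ``only when $|Y| \leq \ell$''. But nothing bounds $|Y|$: Definition \ref{def:lsolid} constrains only $X$. So the sub-case where $X$ and $Y$ are incomparable and $|Y| > \ell$ is left with no argument at all, and your fallback suggestion --- applying \eqref{eq:superb} to $x$ and $X \setminus \{x\}$ --- gives information only about $X$ and separates nothing from $Y$. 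The heuristic you adopted, ``take the distinguishing element from the set of size at most $\ell$'', is backwards. The paper's resolution is the opposite: split on whether $Y \subset X$ or not. If $Y \not\subset X$, then since $Y \neq X$ we have $Y \setminus X \neq \emptyset$; pick $y \in Y \setminus X$ and apply \eqref{eq:superb} with the \emph{point} $y$ and the \emph{bounded set} $X$ --- always legal because $|X| \leq \ell$ --- to get some $s$ with $d(s,Y) \leq d(s,y) < d(s,X)$. (This covers your problematic sub-case.) Only when $Y \subset X$ does one take $x \in X \setminus Y$ and apply \eqref{eq:superb} to $(x,Y)$, which is now legal because $|Y| < |X| \leq \ell$; then $d(s,X) \leq d(s,x) < d(s,Y)$. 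Note also that this same observation is what supplies the ``strict inequality somewhere'' that you invoke but never source in your case $X \subsetneq Y$: it comes from applying \eqref{eq:superb} to $(y,X)$ with $y \in Y \setminus X$, i.e., again with the element drawn from the unbounded set and $X$ serving as the bounded set.
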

\begin{proof}
($\Rightarrow$) Assume that $S$ does not satisfy \eqref{eq:superb}. There exists a vertex $x \in V$ and a set $Y \subseteq V$ such that $x \notin Y$, $|Y| \leq \ell$ and $d(s,x) \geq d(s,Y)$ for all $s \in S$. Now $\D_S (Y) = \D_S (Y \cup \{x\})$ and $S$ is not an $\ell$-solid-resolving set of $G$ by Definition \ref{def:lsolid}.

($\Leftarrow$) Assume then that $S$ satisfies \eqref{eq:superb}. Consider nonempty vertex sets $X,Y \subseteq V$ such that $|X| \leq \ell$ and $X \neq Y$. We have the following two cases:
\begin{enumerate}[topsep=2pt,itemsep=1pt]
 \item $Y \not\subset X$:
 Let $y \in Y \setminus X$. Since $S$ satisfies \eqref{eq:superb}, there exists an element $s \in S$ such that $d(s,y) < d(s,X)$. Now we have $\D_S (X) \neq \D_S (Y)$.
 \item $Y \subset X$:
 Since $X \neq Y$, there exists a vertex $x \in X$ such that $x \notin Y$. Furthermore, we have $|Y| < |X| \leq \ell$. According to \eqref{eq:superb}, we have $d(s,x) < d(s,Y)$ for some $s \in S$, and consequently $\D_S (X) \neq \D_S (Y)$.
\end{enumerate}
Thus, the set $S$ is an $\ell$-solid-resolving set of $G$ by Definition \ref{def:lsolid}.
\end{proof}

Theorem \ref{thm:superb} will be very useful throughout the article. This theorem also implies the corresponding result for $\ell = 1$ in \cite[Thm 2.2]{Solid}.
A somewhat similar result holds for $\{\ell\}$-resolving sets as stated in the following lemma. Unlike in Theorem \ref{thm:superb}, we now have only an implication and not an equivalence.

\begin{lem}\label{lem:superbl}
Let $S \subseteq V$ and $\ell \geq 2$. If $S$ is an $\{\ell\}$-resolving set of $G$, then for all $x \in V$ and $Y \subseteq V$ such that $x \notin Y$ and $|Y| \leq \ell -1$ there exists an element $s \in S$ for which we have 
\begin{equation}\label{eq:superbl}
d(s,x) < d(s,Y).
\end{equation}
\end{lem}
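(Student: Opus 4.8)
The plan is to prove the contrapositive: assume that the conclusion fails, and show that $S$ is then not an $\{\ell\}$-resolving set. So suppose there exist $x \in V$ and $Y \subseteq V$ with $x \notin Y$ and $|Y| \leq \ell - 1$ such that $d(s,x) \geq d(s,Y)$ for every $s \in S$. The natural candidates to violate the $\{\ell\}$-resolving condition are the two sets $Y$ and $Y \cup \{x\}$, exactly as in the forward direction of Theorem \ref{thm:superb}. I would set $X = Y$ and $X' = Y \cup \{x\}$ and compare their distance arrays.

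First I would check the cardinality constraints that Definition \ref{def:lresolving} demands: both $X$ and $X'$ must be nonempty with at most $\ell$ elements. Since $Y$ could in principle be empty (the statement only requires $|Y| \leq \ell-1$, not $Y \neq \emptyset$), the cleanest argument treats $Y$ as possibly empty but notes that $Y \cup \{x\}$ is always nonempty; if $Y = \emptyset$ one should instead compare $\{x\}$ with some other singleton, so I expect a small case split here. For nonempty $Y$ we have $1 \leq |Y| \leq \ell-1$ and $2 \leq |Y \cup \{x\}| \leq \ell$, so both sets are legitimate inputs to the $\{\ell\}$-resolving condition, and they are distinct because $x \notin Y$.

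Next I would verify that the two sets have the same distance array, which forces the contradiction. For each $s \in S$ we compute $d(s, Y \cup \{x\}) = \min\{d(s,Y), d(s,x)\}$ directly from the definition of $d(s,\cdot)$ applied to a set. By our standing assumption $d(s,x) \geq d(s,Y)$, so the minimum is $d(s,Y)$, giving $d(s, Y \cup \{x\}) = d(s,Y)$ for every $s \in S$. Hence $\D_S(Y) = \D_S(Y \cup \{x\})$, and since $Y \neq Y \cup \{x\}$ this contradicts the assumption that $S$ is $\{\ell\}$-resolving.

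The main obstacle is not any hard estimate but the careful bookkeeping of the cardinality bounds, which is exactly why the lemma gives only an implication rather than the equivalence of Theorem \ref{thm:superb}. In the solid setting the companion set $Y$ may have arbitrary size, so one can always append $x$ and stay within the hypotheses; here the rigid ceiling $|Y \cup \{x\}| \leq \ell$ forces the restriction $|Y| \leq \ell - 1$, and the reverse direction fails because an $\{\ell\}$-resolving set need not separate a size-$\ell$ set from a strictly larger set — there is simply no room to enlarge $Y$ by one element while staying at most $\ell$ in the relevant edge cases. I would flag in the proof that this asymmetry is precisely what breaks the converse, mirroring the earlier remark that $\{\ell\}$-resolving sets need not be $\ell$-solid-resolving.
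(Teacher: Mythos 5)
Your proof is correct and follows essentially the same route as the paper's: both argue the contrapositive, compare $\D_S(Y)$ with $\D_S(Y \cup \{x\})$, use $d(s,x) \geq d(s,Y)$ for all $s \in S$ to conclude $\D_S(Y) = \D_S(Y \cup \{x\})$, and note that $|Y| < |Y \cup \{x\}| \leq \ell$ keeps both sets admissible in Definition \ref{def:lresolving}, contradicting the $\{\ell\}$-resolving property. Your extra case split for $Y = \emptyset$ is harmless but unnecessary: with the convention $d(s,\emptyset) = \infty$ the hypothesis $d(s,x) \geq d(s,Y)$ can never hold for empty $Y$, so the degenerate case never arises in the contrapositive (the paper silently treats $Y$ as nonempty, consistent with Theorem \ref{thm:superb}).
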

\begin{proof}
Assume that $S$ does not satisfy \eqref{eq:superbl}. There exists a vertex $x \in V$ and a set $Y \subseteq V$ such that $x \notin Y$, $|Y| \leq \ell - 1$ and $d(s,x) \geq d(s,Y)$ for all $s \in S$. Now $\D_S (Y) = \D_S (Y \cup \{x\})$ and since $|Y| < |Y \cup \{x\}| \leq \ell$, the set $S$ is not an $\{\ell\}$-resolving set of $G$.
\end{proof}

Now, if $S$ is an $\{\ell + 1\}$-resolving set of $G$ for some $\ell \geq 1$, then according to Lemma \ref{lem:superbl} for all $x \in V$ and $Y \subseteq V$ such that $x \notin Y$ and $|Y| \leq \ell$ there exists an element $s \in S$ such that $d(s,x) < d(s,Y)$. According to Theorem \ref{thm:superb}, the set $S$ is now also an $\ell$-solid-resolving set, and the next result is immediate.

\begin{thm}\label{thm:solidrs}
Let $S \subseteq V$ and $\ell \geq 1$.
\begin{enumerate}[topsep=1pt,itemsep=0pt]
 \item[(i)] If $S$ is an $\ell$-solid-resolving set, then it is an $\{\ell\}$-resolving set of $G$.
 \item[(ii)] If $S$ is an $\{\ell +1\}$-resolving set, then it is an $\ell$-solid-resolving set of $G$.
\end{enumerate}
\end{thm}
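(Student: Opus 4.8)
The plan is to treat the two parts separately and, in both cases, to lean on the machinery already established rather than returning to first principles.

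For part (i) I would argue directly from the definitions. The $\ell$-solid condition in Definition \ref{def:lsolid} requires $\D_S (X) \neq \D_S (Y)$ for every pair of distinct nonempty sets with $|X| \leq \ell$ and \emph{no} restriction on $|Y|$, whereas the $\{\ell\}$-resolving condition in Definition \ref{def:lresolving} imposes the same inequality only on the subcollection of those pairs in which additionally $|Y| \leq \ell$. Hence the $\ell$-solid requirement quantifies over strictly more pairs, and any $S$ meeting it automatically meets the weaker $\{\ell\}$-resolving requirement. This is precisely the observation already recorded in the discussion following Definition \ref{def:lsolid}, so part (i) needs essentially no work.

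For part (ii) I would chain Lemma \ref{lem:superbl} with Theorem \ref{thm:superb}. Suppose $S$ is an $\{\ell+1\}$-resolving set; since $\ell \geq 1$ we have $\ell+1 \geq 2$, so Lemma \ref{lem:superbl} applies with parameter $\ell+1$. It yields that for every $x \in V$ and every $Y \subseteq V$ with $x \notin Y$ and $|Y| \leq (\ell+1)-1 = \ell$ there exists an $s \in S$ with $d(s,x) < d(s,Y)$. This is exactly condition \eqref{eq:superb} of Theorem \ref{thm:superb} for the parameter $\ell$. Invoking the ($\Leftarrow$) direction of that characterisation then shows that $S$ is an $\ell$-solid-resolving set, completing the argument.

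The only point requiring care is the index bookkeeping in part (ii): one must check that feeding an $\{\ell+1\}$-resolving set into Lemma \ref{lem:superbl} produces exactly the $|Y| \leq \ell$ bound demanded by Theorem \ref{thm:superb}, and that the lemma's hypothesis $\ell+1 \geq 2$ holds for all $\ell \geq 1$. Beyond this matching of indices there is no genuine obstacle; the statement is an immediate corollary of the two preceding results, which is why I would present it as such.
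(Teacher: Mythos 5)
Your proposal is correct and matches the paper's own argument: part (i) is exactly the observation (already made after Definition \ref{def:lsolid}) that the $\ell$-solid condition quantifies over strictly more pairs $(X,Y)$ than the $\{\ell\}$-resolving condition, and part (ii) is precisely the paper's chaining of Lemma \ref{lem:superbl} (applied with parameter $\ell+1$, valid since $\ell+1 \geq 2$) into the ($\Leftarrow$) direction of Theorem \ref{thm:superb}. No gaps; the index bookkeeping you flag is handled correctly.
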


If we know that a set $S$ is an $\ell$-solid-resolving set of $G$, then to prove that the set $S$ is an $\{\ell + 1\}$-resolving set of $G$, it is sufficient to check that the distance arrays of vertex sets of cardinality $\ell + 1$ are unique. Indeed, according to Definition \ref{def:lsolid} the distance arrays $\D_S (X)$, where $|X| \leq \ell$, are unique. The only thing we need to do to prove that $S$ satisfies Definition \ref{def:lresolving} is to show that no two vertex sets of cardinality $\ell +1$ have the same distance array with respect to $S$.

\subsection{Forced Vertices}

A vertex $v \in V(G)$ is called a \emph{forced vertex} of an $\{\ell\}$-resolving set (sim. $\ell$-solid-resolving set) of $G$ if it must be included in any $\{\ell\}$-resolving set of $G$. In other words, no subset of $V(G) \setminus \{v\}$ is an $\{\ell\}$-resolving set of $G$. The graph we are considering is often clear from the context, and we may refer to a forced vertex of that graph by saying simply that the vertex is forced for an $\ell$-solid- or $\{\ell\}$-resolving set. The number of forced vertices of an $\ell$-solid- or $\{\ell\}$-resolving set gives us an immediate lower bound on the corresponding metric dimension.

The concept of forced vertices was first introduced in \cite{EllRSandKing}, where the forced vertices of $\{\ell\}$-resolving sets were partially characterised. As was pointed out in \cite{Chartrand00}, the set $V \setminus \{v\}$ is a $\{1\}$-resolving set of a nontrivial connected graph $G$ for all $v \in V$. Thus, no such graph has forced vertices for a $\{1\}$-resolving set. In \cite{Solid}, the forced vertices of 1-solid-resolving sets were fully characterised. In this section, we prove characterisations for $\ell$-solid- and $\{\ell\}$-resolving sets for all $\ell$.

We denote by $N(v)$ the \emph{open neighbourhood} of vertex $v$ which is defined as $N(v) = \{ u\in V \ | \ d(v,u) = 1 \}$. The \emph{closed neighbourhood} of a vertex $v \in V$ is $N[v] = N(v) \cup \{v\}$ and the closed neighbourhood of a vertex set $U$ is $N[U] = \cup_{u \in U} N[u]$.

\begin{thm}\label{thm:solfor}
Let $\ell \geq 1$. A vertex $v \in V$ is a forced vertex of an $\ell$-solid-resolving set of $G$ if and only if there exists a set $U \subseteq V$ such that $v \notin U$, $|U| \leq \ell$ and $N(v) \subseteq N[U]$.
\end{thm}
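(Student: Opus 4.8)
The plan is to reduce the forcing condition to a single non-resolving test set and then to unpack that test using the characterisation in Theorem~\ref{thm:superb}. First I would observe that the property of being an $\ell$-solid-resolving set is monotone under inclusion: if $S$ satisfies the condition of Theorem~\ref{thm:superb} and $S \subseteq S'$, then $S'$ satisfies it as well, since the required witness $s$ still lies in $S'$. Consequently $v$ is forced if and only if the largest candidate set avoiding $v$, namely $V \setminus \{v\}$, already fails to be an $\ell$-solid-resolving set. This turns the whole statement into a concrete question about one specific set.

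Next I would apply Theorem~\ref{thm:superb} to $S = V \setminus \{v\}$: this $S$ is \emph{not} $\ell$-solid-resolving precisely when there exist $x \in V$ and nonempty $Y \subseteq V$ with $x \notin Y$, $|Y| \leq \ell$, and $d(s,x) \geq d(s,Y)$ for every $s \neq v$. The key simplification is to show that the offending $x$ can only be $v$ itself. Indeed, if $x \neq v$, then $x$ is an admissible witness $s = x \in S$, and the inequality forces $0 = d(x,x) \geq d(x,Y)$, hence $x \in Y$, contradicting $x \notin Y$. So the test reduces to: there is a nonempty $Y \subseteq V$ with $v \notin Y$, $|Y| \leq \ell$, and $d(s,v) \geq d(s,Y)$ for all $s \neq v$.

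The heart of the argument, and the step I expect to be the main obstacle, is to prove that this distance-domination condition is equivalent to the neighbourhood-covering condition $N(v) \subseteq N[Y]$; setting $U = Y$ then yields the theorem. For one direction, given $N(v) \subseteq N[Y]$ and any $s \neq v$, I would take a neighbour $w$ of $v$ lying on a shortest path from $s$ to $v$, so that $d(s,w) = d(s,v) - 1$; since $w \in N(v) \subseteq N[Y]$, some $u \in Y$ has $d(w,u) \leq 1$, and the triangle inequality gives $d(s,Y) \leq d(s,u) \leq d(s,w) + 1 = d(s,v)$. For the converse, I would specialise the distance condition to $s = w$ for each $w \in N(v)$ (legitimate since $w \neq v$): this yields $d(w,Y) \leq d(w,v) = 1$, so $w \in N[Y]$, whence $N(v) \subseteq N[Y]$. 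Both directions hinge on the observation that a shortest path into $v$ must pass through a vertex of $N(v)$, and that the single extra edge from such a neighbour to $v$ is exactly matched by the radius-one slack in the covering $N[Y]$; making this matching precise is the only delicate point, the remaining steps being routine bookkeeping.
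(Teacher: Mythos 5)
Your proposal is correct and follows essentially the same route as the paper: both reduce the question to whether $V \setminus \{v\}$ satisfies the condition of Theorem~\ref{thm:superb}, both use the trivial witness $s = x$ when $x \neq v$, and both rest on the same two facts — that $N(v) \subseteq N[U]$ forces $d(s,v) \geq d(s,U)$ via the penultimate vertex of a shortest path, and that any $w \in N(v) \setminus N[U]$ satisfies $d(w,v) = 1 < d(w,U)$. Your reorganisation as a chain of equivalences (using monotonicity explicitly) is a tidy packaging, but the underlying argument is the paper's.
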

\begin{proof}
$(\Leftarrow)$ Assume that $v$ and $U$ are as described. The shortest path from any $s \in V \setminus \{v\}$ to $v$ goes through $N(v)$. Since $N(v) \subseteq N[U]$, we have $d(s,v) \geq d(s,U)$ for all $s \in V \setminus \{v\}$. Thus, $\D_S (U) = \D_S (U \cup \{v\})$ for all subsets $S \subseteq V \setminus \{v\}$.

$(\Rightarrow)$ Assume then that $v \in V$ and for all $U \subseteq V$ such that $v \notin U$ and $|U|\leq \ell$ we have $N(v) \nsubseteq N[U]$. Now there exists a vertex $w \in N(v) \setminus N[U]$, and we have $d(w,v) < d(w,U)$. Since $d(x,x) < d(x,Y)$ for any $x \in V$ and $Y \subseteq V \setminus \{x\}$, the set $V \setminus \{v\}$ satisfies \eqref{eq:superb} and is thus an $\ell$-solid-resolving set of $G$, which contradicts the fact that $v$ is forced for an $\ell$-solid-resolving set.
\end{proof}

According to Theorem \ref{thm:solidrs} an $\{\ell\}$-resolving set, where $\ell \geq 2$, is always an $(\ell-1)$-solid-resolving set of the graph in question. Thus, if a vertex is forced for $(\ell-1)$-solid-resolving sets of a graph, then it is also forced for the $\{\ell\}$-resolving sets of the same graph. The following theorem characterises all forced vertices of an $\{\ell\}$-resolving set of a graph, and shows that the forced vertices of $\{\ell\}$-resolving sets are in fact \emph{exactly the same} as those of $(\ell-1)$-solid-resolving sets. 

\begin{thm}\label{thm:resfor}
Let $\ell \geq 2$. A vertex $v \in V$ is a forced vertex of an $\{\ell\}$-resolving set of $G$ if and only if there exists a set $U \subseteq V$ such that $v \notin U$, $|U| \leq \ell -1$ and $N(v) \subseteq N[U]$.
\end{thm}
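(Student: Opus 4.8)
The plan is to prove Theorem~\ref{thm:resfor} by establishing both implications, exploiting the close analogy with Theorem~\ref{thm:solfor} while carefully tracking where the cardinality bound shifts from $\ell$ to $\ell-1$. The underlying intuition is that a vertex $v$ is forced for $\{\ell\}$-resolving sets precisely when its neighbourhood can be ``covered'' by the closed neighbourhood of some set $U$ of size at most $\ell-1$; such a covering lets us hide $v$ behind $U$ by forming two sets of cardinality at most $\ell$ with identical distance arrays.

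First I would prove the easier direction $(\Leftarrow)$. Assume $v \notin U$, $|U| \leq \ell - 1$ and $N(v) \subseteq N[U]$. As in the proof of Theorem~\ref{thm:solfor}, any shortest path from a vertex $s \in V \setminus \{v\}$ to $v$ passes through $N(v)$, and since $N(v) \subseteq N[U]$ we obtain $d(s,v) \geq d(s,U)$ for every $s \in V \setminus \{v\}$. Consequently $\D_S(U) = \D_S(U \cup \{v\})$ for every $S \subseteq V \setminus \{v\}$. Here the point is that $U$ and $U \cup \{v\}$ are distinct nonempty sets with $|U| \leq \ell - 1 < \ell$ and $|U \cup \{v\}| \leq \ell$, so \emph{both} fall within the cardinality restriction of Definition~\ref{def:lresolving}. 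Hence no subset of $V \setminus \{v\}$ can be an $\{\ell\}$-resolving set, which means exactly that $v$ is forced.

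For the converse $(\Rightarrow)$, the cleanest route is contraposition: assume that for every $U \subseteq V$ with $v \notin U$ and $|U| \leq \ell - 1$ we have $N(v) \nsubseteq N[U]$, and show $v$ is not forced by exhibiting an $\{\ell\}$-resolving set inside $V \setminus \{v\}$. The natural candidate is $S = V \setminus \{v\}$ itself. Rather than verifying Definition~\ref{def:lresolving} directly, I would invoke the reduction already built in the paper: by Theorem~\ref{thm:solidrs}(i) it suffices to check that $V \setminus \{v\}$ is an $(\ell-1)$-solid-resolving set, since that implies it is $\{\ell-1\}$-resolving, and then combined with the structure one upgrades to $\{\ell\}$-resolving. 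Concretely, our hypothesis says that for every admissible $U$ there is a vertex $w \in N(v) \setminus N[U]$, giving $d(w,v) < d(w,U)$; together with the trivial inequality $d(x,x) < d(x,Y)$ for $x \in V$ and $Y \subseteq V \setminus \{x\}$, this shows $V \setminus \{v\}$ satisfies the characterisation~\eqref{eq:superb} with parameter $\ell-1$, hence is an $(\ell-1)$-solid-resolving set by Theorem~\ref{thm:superb}. By Theorem~\ref{thm:solidrs}(i) it is then $\{\ell-1\}$-resolving, and I would finish by arguing it is moreover $\{\ell\}$-resolving.

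The main obstacle is precisely this last upgrade from $\{\ell-1\}$-resolving to $\{\ell\}$-resolving, since Theorem~\ref{thm:solidrs}(i) only yields $(\ell-1)$-solid implies $\{\ell-1\}$-resolving. The remark following Theorem~\ref{thm:solidrs} is exactly the tool needed: once $V \setminus \{v\}$ is known to be $(\ell-1)$-solid-resolving, to prove it is $\{\ell\}$-resolving it suffices to show that no two distinct sets of cardinality exactly $\ell$ share a distance array. I would verify this directly using the same covering-failure hypothesis, observing that the vertex $w \in N(v)$ witnessing $N(v) \nsubseteq N[U]$ belongs to $V \setminus \{v\} = S$ and separates the relevant sets. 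I expect the bookkeeping here --- ensuring the witness $w$ lies in $S$ and handling the case where the two $\ell$-sets differ only in $v$ --- to be the delicate part, whereas the $(\Leftarrow)$ direction and the application of Theorems~\ref{thm:superb} and~\ref{thm:solidrs} are routine.
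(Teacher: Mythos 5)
Your proposal is correct, but your forward direction is organised differently from the paper's. The paper proves $(\Rightarrow)$ by a direct decoding argument: for $S = V \setminus \{v\}$, the zeros of $\D_S(X)$ determine $X' = X \cap S$ exactly (since $S$ omits only $v$); if $|X'| = \ell$ then $X = X'$, and otherwise $|X'| \leq \ell - 1$, so the covering-failure hypothesis supplies a witness $w \in N(v) \setminus N[X']$ with $d(w,X) = d(w,v)$ if and only if $v \in X$, which pins down $X$ completely. You instead first verify via Theorem \ref{thm:superb} that $V \setminus \{v\}$ is an $(\ell-1)$-solid-resolving set (the hypothesis entering only at $x = v$, the trivial inequality $d(x,x) < d(x,Y)$ handling all $x \neq v$), and then invoke the remark after Theorem \ref{thm:solidrs} to reduce the $\{\ell\}$-resolving property to distinguishing pairs of sets of cardinality exactly $\ell$. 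This route closes, and the step you flag as delicate is in fact vacuous: if $|X| = |Y| = \ell$ and $X \cap S = Y \cap S$, then $X$ and $Y$ could differ only in whether they contain $v$, which would force $|X| \neq |Y|$; hence any two distinct $\ell$-sets have $X \cap S \neq Y \cap S$ and are separated by a zero coordinate, with no need for the witness $w$ at all in this final step. (Your intermediate appeal to Theorem \ref{thm:solidrs}(i) to get $\{\ell-1\}$-resolving is a detour you never use; the remark needs only the $(\ell-1)$-solid property.) What your packaging buys is reuse of the paper's machinery and a clean separation of where the hypothesis is needed; what the paper's decoding argument buys is a self-contained proof that also exhibits explicitly how to reconstruct a set from its distance array. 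Your $(\Leftarrow)$ direction is the same as the paper's, simply unfolding the argument that the paper compresses into citations of Theorems \ref{thm:solidrs} and \ref{thm:solfor}.
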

\begin{proof}
$(\Leftarrow)$ Clear by Theorems \ref{thm:solidrs} and \ref{thm:solfor}.

$(\Rightarrow)$ Assume then that $v \in V$ and that for all $U \subseteq V$ such that $v \notin U$ and $|U|\leq \ell - 1$ we have $N(v) \nsubseteq N[U]$. We will show that the set $S=V \setminus \{v\}$ is an $\{\ell\}$-resolving set of $G$ by showing how to determine the elements of a vertex set $X$ when the distance array $\D_S (X)$ is known. Consider a nonempty set $X \subseteq V$, where $|X| \leq \ell$, and let $\D_S (X)$ be known. We can easily determine the elements of $X' = X \cap S$ by considering the zeros in the distance array $\D_S (X)$. If $|X'| = \ell$, then $X = X'$ and we have uniquely determined all elements of $X$. Otherwise, we still need to determine whether $v$ is in $X$ since it is the only vertex of the graph that is not in $S$. Since $|X'| \leq \ell - 1$ and $v \notin X'$, there exists a vertex $w \in N(v) \setminus N[X']$ according to our assumption. Now, $d(w,v) < d(w,X')$ and $d(w,X) = d(w,v)$ if and only if $v \in X$.
\end{proof}

To illustrate the previous theorems, consider again the graph $H$ in Figure \ref{fig:intro}. Since $N(v_1) = \{ v_2,v_3,v_4 \}$ and $N(v_3) = \{v_1,v_2,v_4\}$, we have $N(v_1) \subseteq N[v_3]$ and $N(v_3) \subseteq N[v_1]$. By Theorems \ref{thm:solfor} and \ref{thm:resfor}, the vertices $v_1$ and $v_3$ are forced vertices of 1-solid- and $\{2\}$-resolving sets of $H$.

Consider then any connected graph $G$. If $\deg (v) \leq \ell$ for some vertex $v$ and integer $\ell \geq 1$, then $N(v) \subseteq N[N(v)]$ and $v$ is forced for $\ell$-solid- and $\{\ell+1\}$-resolving sets of $G$ by Theorems \ref{thm:solfor} and \ref{thm:resfor}. 
In particular, if $G$ is a tree, then a vertex $v$ is forced for $\ell$-solid- and $\{\ell+1\}$-resolving sets if and only if $\deg (v) \leq \ell$.
In \cite{EllRSandKing}, it was shown that the forced vertices of an $\{\ell\}$-resolving set of a tree indeed form an $\{\ell\}$-resolving set, when $\ell \geq 2$. Since any $\{\ell + 1\}$-resolving set is an $\ell$-resolving set and the forced vertices of these two types of resolving sets are exactly the same, the $\ell$-solid-resolving sets of a tree consist of only the corresponding forced vertices. Thus, for any $\ell$ we can construct trees that have nontrivial $\ell$-solid- and $\{\ell\}$-resolving sets.

\section{Bounds and Characterisations}

For the $\{1\}$-metric dimension of a graph there is the obvious lower bound $\beta_1 (G) \geq 1$. This lower bound is attained if and only if $G = P_n$ \cite{Chartrand00, Khuller96}.
In this section, we prove a lower bound on the $\ell$-solid-metric dimension of a graph and characterise the graphs attaining that bound. The lower bound $\beta_1^s (G) \geq 2$ on the 1-solid-metric dimension of a graph was shown in \cite{Solid}. The following theorem generalises this lower bound for $\ell$-solid-metric dimensions where $\ell \geq 2$.

\begin{thm}\label{thm:lslowerbound}
Let $G$ be a graph with $n$ vertices. When $1 \leq \ell \leq n-1$, we have $\beta_\ell^s (G) \geq \ell + 1$.
\end{thm}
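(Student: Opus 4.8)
The plan is to argue by contradiction using the characterisation of $\ell$-solid-resolving sets from Theorem \ref{thm:superb}. Suppose $S$ is an $\ell$-solid-resolving set of $G$ with $|S| \leq \ell$; I aim to exhibit a vertex $x$ and a set $Y$ that violate the condition \eqref{eq:superb}, forcing $S$ to fail the characterisation and hence contradicting the assumption. Since Theorem \ref{thm:superb} turns the set-distinguishing requirement into the single inequality $d(s,x) < d(s,Y)$, it suffices to find one pair $(x,Y)$ for which no $s \in S$ achieves a \emph{strict} inequality.

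The key observation is that taking $Y = S$ itself is the natural witness. First I would note that $S$ must be nonempty: the hypotheses $\ell \geq 1$ and $\ell \leq n-1$ force $n \geq 2$, and the empty set assigns every nonempty vertex set the same (empty) distance array, so $\emptyset$ is never $\ell$-solid-resolving. With $S \neq \emptyset$ and $|S| \leq \ell$, the set $Y = S$ is a legitimate candidate since it is nonempty and satisfies $|Y| = |S| \leq \ell$. Because $|S| \leq \ell \leq n-1 < n$, there exists a vertex $x \in V \setminus S$, so the requirement $x \notin Y$ holds. Now for every $s \in S$ we have $s \in Y$, whence $d(s,Y) = 0$, and since distances are nonnegative the strict inequality $d(s,x) < d(s,Y) = 0$ is impossible for any $s \in S$. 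Thus the pair $(x,Y)$ violates \eqref{eq:superb}, and by Theorem \ref{thm:superb} the set $S$ is not $\ell$-solid-resolving, a contradiction. Therefore every $\ell$-solid-resolving set has at least $\ell + 1$ vertices, giving $\beta_\ell^s(G) \geq \ell + 1$.

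This argument is short, and the only genuinely delicate point—what I would flag as the step to state carefully rather than the main mathematical obstacle—is the bookkeeping around the hypothesis $\ell \leq n-1$. This inequality is used precisely to guarantee the existence of a vertex $x$ lying outside $S$ when $|S| \leq \ell$; without it (for instance if $\ell = n$) one could have $S = V$ and no such $x$ would exist, and indeed the bound itself would be vacuous or false. I would also make explicit that the strictness of \eqref{eq:superb} is what makes $Y = S$ lethal: the condition demands a strictly smaller distance from some $s \in S$, which $d(s,S)=0$ can never be beaten by. Everything else is immediate once Theorem \ref{thm:superb} is invoked, so no separate calculations are needed.
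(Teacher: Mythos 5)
Your proof is correct and is essentially the paper's own argument: both use the set $S$ itself together with a vertex $v \notin S$ as the witness, the only difference being that you route through the characterisation of Theorem \ref{thm:superb} (observing no $s$ can satisfy $d(s,x) < d(s,S) = 0$) while the paper applies Definition \ref{def:lsolid} directly to the pair $S$ and $S \cup \{v\}$, which both have distance array $(0,\ldots,0)$. Since the forward direction of Theorem \ref{thm:superb} is proved by exactly this construction, the two arguments are the same up to one level of packaging; your explicit handling of the empty set is a minor extra care the paper sidesteps by convention (resolving sets are taken nonempty).
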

\begin{proof}
Let $S \subseteq V$ such that $1 \leq |S| \leq \ell$. Since $\ell \leq n-1$, there exists at least one vertex $v$ which is not in $S$. Now, $\D_S (S) = (0,\ldots,0) = \D_S (S \cup \{v\})$, and $S$ is not an $\ell$-solid-resolving set of $G$ according to Definition \ref{def:lsolid}.
\end{proof}

The following theorem characterises the graphs attaining the bound of Theorem \ref{thm:lslowerbound}.

\begin{thm}
Let $G$ be a connected graph with $n$ vertices and let $2\leq \ell \leq n-1$. We have
\begin{align*}
\beta_{\ell}^s(G)=\ell +1 \text{ if and only if } n=\ell +1 \text{ or } G= K_{1,\ell + 1}.
\end{align*}
\end{thm}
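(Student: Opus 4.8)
The plan is to treat the two implications separately, spending almost all of the effort on the forward (``only if'') direction.

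For the ``if'' direction both cases are short. If $n = \ell + 1$, then Theorem~\ref{thm:lslowerbound} gives $\beta_\ell^s(G) \ge \ell + 1$, while the whole vertex set $V(G)$ is trivially an $\ell$-solid-resolving set of cardinality $n = \ell + 1$, so equality holds. If $G = K_{1,\ell+1}$, then $n = \ell + 2$ and again $\beta_\ell^s(G) \ge \ell + 1$ by Theorem~\ref{thm:lslowerbound}; I would then verify, via the criterion of Theorem~\ref{thm:superb}, that the set $S$ of all $\ell + 1$ leaves is $\ell$-solid-resolving. Indeed, for a leaf target $x$ the leaf $x$ itself witnesses $d(x,x)=0<d(x,Y)$, and for the centre $x=c$ any admissible $Y$ with $c\notin Y$ consists of leaves, so by pigeonhole (there are $\ell+1$ leaves and $|Y|\le\ell$) some leaf $s\notin Y$ exists and satisfies $d(s,c)=1<2=d(s,Y)$. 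This yields $\beta_\ell^s(K_{1,\ell+1})=\ell+1$.

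For the ``only if'' direction, assume $\beta_\ell^s(G) = \ell + 1$; if $n = \ell + 1$ we are done, so assume $n \ge \ell + 2$ and fix a minimum $\ell$-solid-resolving set $S$ with $|S| = \ell + 1$. The first key step is to extract strong local information from Theorem~\ref{thm:superb}: applying its criterion to a target $x \in V \setminus S$ and to $Y = S \setminus \{s_0\}$ (of size $\ell$), the only element of $S$ that can witness the required strict inequality is $s_0$ itself, since every other $s \in Y$ has $d(s,Y)=0$. Hence for every external vertex $x$ and every $s_0 \in S$,
\begin{equation*}
d(s_0,x) < d(s_0, S \setminus \{s_0\}).
\end{equation*}
As $n \ge \ell + 2$ guarantees that such an external $x$ exists, this inequality forces $d(s_0, S\setminus\{s_0\}) \ge 2$ for every $s_0$, i.e.\ $S$ is an independent set.

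The heart of the argument is to show that there is exactly one external vertex. Suppose instead that $V \setminus S$ contains two distinct vertices $a$ and $b$. By the displayed inequality, for any two distinct $s_0, s_1 \in S$ the set $Y = \{a\} \cup (S \setminus \{s_0,s_1\})$ satisfies $d(s_0,Y)=d(s_0,a)$ and $d(s_1,Y)=d(s_1,a)$ (the $a$-term beats every remaining $S$-distance), while $d(s,Y)=0$ for all other $s$. Now compare $a$ and $b$ over the $\ell+1$ coordinates indexed by $S$: writing $p,q,r$ for the numbers of coordinates $s$ with $d(s,a)<d(s,b)$, $d(s,a)=d(s,b)$, $d(s,a)>d(s,b)$, we have $p+q+r=\ell+1\ge 3$, so $p+q$ and $r+q$ cannot both be at most $1$; hence there are two coordinates on which one of the two vertices is at least as close as the other. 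Picking $s_0,s_1$ among these favourable coordinates (and swapping the roles of $a$ and $b$ if necessary) gives $x \in \{a,b\}$ and a set $Y$ of size $\ell$ with $x \notin Y$ and $d(s,x) \ge d(s,Y)$ for all $s \in S$, so $\D_S(Y)=\D_S(Y\cup\{x\})$, contradicting that $S$ is $\ell$-solid-resolving. Therefore $|V \setminus S| = 1$ and $n = \ell + 2$; since $S$ is independent and $G$ is connected, the unique external vertex must be adjacent to all of $S$, giving $G = K_{1,\ell+1}$. I expect this counting step to be the main obstacle: the earlier reductions follow directly from Theorem~\ref{thm:superb}, whereas producing the unresolved pair requires mixing the external vertex $a$ with $\ell-1$ elements of $S$ and then invoking the three-way coordinate count, which is exactly where the hypothesis $\ell \ge 2$ (so $\ell+1 \ge 3$ coordinates) is used.
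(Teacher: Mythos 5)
Your proof is correct, but it takes a genuinely different and noticeably shorter route than the paper's for the hard direction. The paper also uses Theorem~\ref{thm:superb} as the engine, but its ``only if'' argument is structural: it first shows $S$ is independent, then that every $s \in S$ has degree one, and then splits into two cases according to whether some vertex outside $S$ has two neighbours in $S$; the second case requires an elaborate construction (the neighbours $v_{i,j}$ of the vertices $v_i$, a minimality choice of $d(v_{1,1},s_{\ell+1})$, and a final application of \eqref{eq:superb}). You replace all of this with one clean observation --- applying \eqref{eq:superb} to an external vertex $x$ and $Y = S \setminus \{s_0\}$ forces the witness to be $s_0$ itself, giving $d(s_0,x) < d(s_0, S\setminus\{s_0\})$ for \emph{every} $s_0 \in S$ and every external $x$ --- from which independence of $S$ is immediate, and then a three-way pigeonhole count over the $\ell+1 \ge 3$ coordinates kills the possibility of two external vertices outright, with no degree analysis and no case split. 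Your count is airtight: if $p+q \le 1$ and $r+q \le 1$ then $p+r+2q \le 2 < \ell+1$, so one of the two vertices is at least as close as the other on two coordinates, and mixing the closer vertex with $S \setminus \{s_0,s_1\}$ produces a set of size exactly $\ell$ whose distance array absorbs the other vertex, contradicting Theorem~\ref{thm:superb}. This also isolates precisely where $\ell \ge 2$ enters, matching the fact that the characterisation fails for $\ell = 1$. In the ``if'' direction you are also more self-contained: the paper imports $\beta_{\ell+1}(K_{1,\ell+1}) = \ell+1$ from an external reference and passes through Theorem~\ref{thm:solidrs}, whereas you verify directly via Theorem~\ref{thm:superb} that the $\ell+1$ leaves form an $\ell$-solid-resolving set. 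What the paper's longer route buys is auxiliary structural information about extremal bases (independence plus the degree-one property of basis vertices); what yours buys is brevity and a transparent dependence on the hypothesis $\ell \ge 2$.
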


\begin{proof}
If $n=\ell +1$, on the one hand, $\beta_{\ell}^s(G)\leq n=\ell+1$ and on the other hand $\beta_{\ell}^s(G)> \ell$, and thus $\beta_{\ell}^s(G)=\ell+1$. Also, by Theorem 2.9 of \cite{EllRSandKing}, the star $K_{1,\ell+1}$ with $\ell +2 $ vertices satisfies $\beta_{\ell+1}(K_{1,\ell+1})=\ell+1$. Therefore $\ell<\beta_{\ell}^s(K_{1,\ell+1})\leq \beta_{\ell+1}(K_{1,\ell+1})=\ell+1$, and thus $\beta_{\ell}^s(K_{1,\ell+1})=\ell +1$.

Conversely, suppose that $G$ is a connected graph such that $|V|=n\geq \ell +2$ and $\beta_{\ell}^s(G)=\ell +1$ and let $S\subseteq V$ be an $\ell$-solid-resolving set with $\ell +1$ vertices. The following properties hold.

\begin{enumerate}

\item The set $S$ is independent: Suppose to the contrary that there exist $s_1,s_2 \in S$ such that $d(s_1,s_2)=1$. Since $|V|\geq \ell+2$, there exists $u\in V\setminus S$ that satisfies $d(u,s_1)\geq 1=d(s_2,s_1)$. Now, $d(v,u) \geq d(v, S \setminus \{s_1\})$ for all $v \in S$, and since $|S\setminus\{s_1\}|=\ell$, the set $S$ is not an $\ell$-solid-resolving set of $G$ according to \eqref{eq:superb}, when $x=u$ and $Y=S \setminus \{s_1\}$.

\item We have $\deg(s)=1$ for every $s\in S$: Denote $S = \{s_1, \ldots, s_{\ell+1} \}$. Since $G$ is connected and $S$ is independent, each $s_i$ has a neighbour in $V \setminus S$, say $v_i \in N(s_i)$ for $i = 1, \ldots , \ell +1$. Suppose to the contrary that $\deg (s_i) \geq 2$ for some $i$. Assume without loss of generality that $\deg (s_1) \geq 2$. There exists a vertex $v_1' \in N(s_1)$, $v_1' \neq v_1$. Let $A = \{ v_1, \ldots , v_\ell \}$. Since $S$ is an $\ell$-solid-resolving set of $G$, according to Theorem \ref{thm:superb} we must have $d(s_i,v_1') < d(s_i,A)$ for some $i \in \{1, \ldots, \ell+1\}$. However, we have $d(s_i,A)=1$ for all $i \in \{1, \ldots, \ell \}$, and thus $d(s_{\ell +1}, v_1') < d(s_{\ell+1}, A)$. Specifically, we have $d(s_{\ell +1}, v_1') < d(s_{\ell+1}, v_1)$. Similarly, for $v_1$ and $B = \{ v_1', v_2, \ldots, v_\ell \}$ we have $d(s_{\ell+1} , v_1) < d(s_{\ell+1}, B)$, and specifically $d(s_{\ell+1}, v_1) < d(s_{\ell+1}, v_1')$, a contradiction. Thus, $\deg (s) = 1$ for all $s \in S$.
\end{enumerate}

We now consider two cases.

\begin{enumerate}
\item[Case 1:] There exists $u\in V\setminus S$ and two different vertices $s_1,s_2\in S$ such that $d(u,s_1)=d(u,s_2)=1$. If $|V\setminus S|\geq 2$, then let $v\in V\setminus S$ be such that $v\neq u$. Let $X=(S\setminus \{s_1,s_2\})\cup \{u\}$ and $Y=X\cup \{v\}$. We obtain that $\mathcal{D}_S(X)=\mathcal{D}_S(Y)=(1,1,0,\dots, 0)$, a contradiction. This means that, in this case, $V\setminus S=\{u\}$, and since $u$ is not a forced vertex, $\deg(u)\geq \ell +1$, and thus $u$ is a neighbour of every vertex in $S$. Finally, $G=K_{1,\ell+1}$ because $S$ is independent.

\par\medskip

\item[Case 2:] Every vertex in $V\setminus S$ has at most one neighbour in $S$. As seen above, we know that every vertex in $S$ has exactly one neighbour in $V\setminus S$. We denote $S=\{s_1, \dots , s_{\ell +1}\}$ and $A=\{v_1,\dots , v_{\ell +1}\}$ ($|A|=\ell +1$) where $v_i$ is the unique neighbour of $s_i$, for $1\leq i\leq \ell+1$, and note that $\ell +1\geq 3$. The following properties hold.

\begin{enumerate}

\item The set $A$ is independent. Suppose to the contrary that, say, $v_1$ and $v_2$ are neighbours. Thus, $d(v_1,s_2)=2$. Define the sets $X=(S\setminus \{s_1,s_2\})\cup \{v_1\}$ and $Y=X\cup \{ v_3\}$. Clearly $\mathcal{D}_S(X)=\mathcal{D}_S(Y)=(1,2,0,\dots, 0)$, a contradiction.

\item No pair of vertices of $A$ has a common neighbour. Suppose to the contrary (without loss of generality) that there exists $w\in V$ that satisfies $d(v_1,w)=d(v_2,w)=1$. Then $\deg(w)\geq 2$ and $w\notin S$. Moreover $w\notin A$, because $A$ is independent. Let $X=(S\setminus \{s_1,s_2\})\cup \{w\}$ and $Y=X\cup \{ v_3\}$. Then $\mathcal{D}_S(X)=\mathcal{D}_S(Y)=(2,2,0,\dots, 0)$, a contradiction.
\end{enumerate}

Note that every $v_i\in A$ has at least $\ell$ neighbours ($\ell \geq 2$) in $V\setminus S$, say $\{v_{i,j} \ | \ 1\leq j\leq \ell\}$, because it is not forced. The last property gives that $v_{i,j}\neq v_{i',j'}$ for $(i,j)\neq (i',j')$.

Assume, without loss of generality, that $d(v_{1,1}, s_{\ell +1})=\min \{d(v_{1,j}, s_{\ell +1}) \ | \ 1\leq j\leq \ell\}$ 
and let $X=\{ v_{1,1}, v_{2,1}, \dots , v_{\ell, 1}\}$.
Then for all $s_i \in S$, where $i \neq \ell+1$, we have $d(s_i, X) = 2 \leq d(s_i,v_{1,2})$ since $A$ and $S$ are both independent. Furthermore, since $d(v_{1,1}, s_{\ell +1})\leq d(v_{1,2}, s_{\ell +1})$, we have $d(s_{\ell+1},X) \leq d(s_{\ell+1}, v_{1,1}) \leq d(s_{\ell+1},v_{1,2})$. Thus, $d(s_i,v_{1,2}) \geq d(s_i, X)$ for all $s_i \in S$, and $S$ is not an $\ell$-solid-resolving set of $G$ by Theorem \ref{thm:superb}, a contradiction.
\end{enumerate}
\end{proof}

Notice that the number of graphs that attain the lower bound $\beta_\ell^s (G) \geq \ell + 1$ is infinite when $\ell=1$ and finite when $\ell \geq 2$. 
Corresponding results for $\{\ell\}$-resolving sets can be found in \cite{EllRSandKing}.

Let us then consider infinite graphs, that is, graphs with infinitely many vertices. In \cite{Caceres12}, it was shown that an infinite graph may have finite or infinite $\{1\}$-metric dimension. We will show that the $\{\ell\}$-metric dimension, where $\ell \geq 2$, is infinite for any infinite graph. Moreover, the $\ell$-solid-metric dimension of any infinite graph is infinite. To prove these results, we will consider doubly resolving sets.

\begin{defin}[\cite{Caceres07}]
Let $G$ be a graph with $|V(G)|\geq 2$. Two vertices $v, w \in V (G)$ are \emph{doubly resolved} by $x, y \in V (G)$ if
$d(v, x)-d(w, x)\neq d(v, y)-d(w, y)$.
A set of vertices $S \subseteq V (G)$ doubly resolves $G$, and $S$ is a \emph{doubly resolving set}, if
every pair of distinct vertices $v, w \in  V (G)$ is doubly resolved by two vertices in $S$.
\end{defin}

In \cite{Solid}, it was shown that a 1-solid-resolving set of $G$ is a doubly resolving set of $G$. 
According to Theorem \ref{thm:solidrs} any $\{\ell\}$-resolving set, where $\ell \geq 2$, and $\ell$-solid-resolving set is a 1-solid-resolving set. The following result is now immediate.

\begin{cor}\label{cor:doubly}
If $S\subseteq V(G)$ is an $\{\ell\}$-resolving set ($\ell \geq 2$) or an $\ell$-solid-resolving set of $G$ ($\ell \geq 1$), then $S$ is a doubly resolving set of $G$.
\end{cor}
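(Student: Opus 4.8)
The plan is to reduce both cases in the statement to the single fact, established in \cite{Solid}, that every $1$-solid-resolving set of $G$ is a doubly resolving set of $G$. Once I verify that each of the two kinds of sets named in the corollary is in particular a $1$-solid-resolving set, the conclusion follows at once. So the whole argument amounts to chaining the implications of Theorem \ref{thm:solidrs} (together with the definitions) down to the index $\ell = 1$.

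First I would dispose of the $\ell$-solid case, for $\ell \geq 1$. If $S$ is an $\ell$-solid-resolving set, then by Definition \ref{def:lsolid} we have $\D_S (X) \neq \D_S (Y)$ for every pair of distinct nonempty sets $X, Y \subseteq V$ with $|X| \leq \ell$. Restricting attention to singletons $X = \{v\}$, which trivially satisfy $|X| = 1 \leq \ell$, this says precisely that $\D_S (v) \neq \D_S (Y)$ for every vertex $v$ and every nonempty $Y \neq \{v\}$. That is exactly the definition of a $1$-solid-resolving set, so $S$ is a $1$-solid-resolving set of $G$.

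Next I would treat the $\{\ell\}$-resolving case for $\ell \geq 2$. Here I would invoke Theorem \ref{thm:solidrs}(ii) with a shifted index: since $S$ is an $\{\ell\}$-resolving set and $\ell = (\ell - 1) + 1$ with $\ell - 1 \geq 1$, that theorem yields that $S$ is an $(\ell-1)$-solid-resolving set. Applying the previous paragraph with $\ell$ replaced by $\ell - 1 \geq 1$, the set $S$ is then a $1$-solid-resolving set. Thus in both cases $S$ is a $1$-solid-resolving set, and the cited result from \cite{Solid} immediately gives that $S$ is a doubly resolving set of $G$. There is no genuine obstacle in this proof: the entire content is carried by Theorem \ref{thm:solidrs} and the known fact about $1$-solid-resolving sets, and the only point requiring care is the bookkeeping of the cardinality restrictions so that each chain of implications correctly terminates at the index $\ell = 1$.
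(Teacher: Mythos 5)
Your proposal is correct and follows essentially the same route as the paper: the paper also reduces both cases to the fact from \cite{Solid} that a $1$-solid-resolving set is doubly resolving, using Theorem \ref{thm:solidrs} to pass from $\{\ell\}$-resolving sets to $(\ell-1)$-solid-resolving sets and the (definitional) monotonicity in $\ell$ to descend to $\ell=1$. The only difference is that you spell out the monotonicity step explicitly, which the paper leaves implicit.
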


\begin{lem}[\cite{Caceres12}]\label{lem:doublyinf}
If $G$ is an infinite graph, then any doubly resolving set of $G$ is infinite.
\end{lem}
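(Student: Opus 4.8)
The plan is to prove the contrapositive: if $G$ admits a \emph{finite} doubly resolving set, then $G$ itself is finite. So suppose $S = \{s_1, \ldots, s_k\}$ is a finite doubly resolving set of $G$. Since $G$ is connected, every distance $d(s_i, v)$ is a finite nonnegative integer, and in particular each $d(s_i, s_1)$ is finite.

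First I would reformulate the doubly resolving condition in terms of distance arrays. Two distinct vertices $v, w$ fail to be doubly resolved by any pair from $S$ precisely when $d(v, s_i) - d(w, s_i)$ takes one and the same value, say $c$, for every index $i$; that is, when $\D_S(v) - \D_S(w)$ is the constant vector $(c, \ldots, c)$. Hence $S$ doubly resolves $G$ if and only if the reduced map
\[
\phi(v) = \bigl(d(s_2, v) - d(s_1, v), \ldots, d(s_k, v) - d(s_1, v)\bigr),
\]
obtained by subtracting the first coordinate of $\D_S(v)$ from all the others, is injective on $V(G)$: two vertices have the same image under $\phi$ exactly when their distance arrays differ by a constant vector, i.e. exactly when they are not doubly resolved by $S$.

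Next I would bound the image of $\phi$. By the triangle inequality, $|d(s_i, v) - d(s_1, v)| \leq d(s_i, s_1)$ for every $i$ and every $v \in V(G)$, so each coordinate of $\phi(v)$ is an integer lying in the finite interval $[-d(s_i, s_1),\, d(s_i, s_1)]$. Consequently $\phi$ takes at most $\prod_{i=2}^{k} \bigl(2\, d(s_i, s_1) + 1\bigr)$ distinct values, which is a finite number depending only on $S$.

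Since $\phi$ is injective and its image is finite, the domain $V(G)$ must be finite, which establishes the contrapositive and hence the lemma. The one step requiring genuine care is the reformulation in the second paragraph — recognising that ``not doubly resolved by any pair in $S$'' is equivalent to the difference $\D_S(v) - \D_S(w)$ being a constant vector — after which the triangle-inequality bound and the pigeonhole conclusion are routine. The effectiveness of that bound rests on all pairwise distances being finite, which is exactly what connectedness supplies; in a setting where some distances could be infinite, the reduction to a finite image would break down.
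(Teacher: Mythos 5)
Your proof is correct. Note that the paper itself gives no proof of this lemma --- it is imported as a known result from the cited reference \cite{Caceres12} --- so there is no internal argument to compare against; your reconstruction (reformulating ``not doubly resolved'' as the distance arrays differing by a constant vector, passing to the difference map $\phi$, bounding each coordinate by $d(s_i,s_1)$ via the triangle inequality, and concluding by pigeonhole that an injective map with finite image has finite domain) is exactly the standard argument, and it is essentially the one used in that reference. The only hypotheses it leans on, connectedness (so that all distances are finite integers) and the convention that a single vertex never doubly resolves a pair, are consistent with the paper's standing assumptions, so the argument goes through without gaps.
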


The following corollary is now immediate due to Corollary \ref{cor:doubly} and Lemma \ref{lem:doublyinf}.

\begin{cor}
If $G$ is an infinite graph, then $\beta_\ell (G) = \infty$, when $\ell \geq 2$, and $\beta_\ell^s (G) = \infty$, when $\ell \geq 1$.
\end{cor}

\section{On Cartesian Products of Graphs}

The \emph{Cartesian product} of the graphs $G$ and $H$ is the graph $G \Box H$ with the vertex set $\{av \ | \ a \in V(G), \ v \in V(H)\}$. Distinct vertices $av,bu \in V(G \Box H)$ are adjacent if $a=b$ and $v \in N_H(u)$, or $a \in N_G(b)$ and $v=u$. We have $d_{G \Box H} (av,bu) = d_G (a,b) + d_H (v,u)$. To simplify notations, we may denote $V$ instead of $V(G \Box H)$ and omit the subscript $G \Box H$ from the distance function. The \emph{projection} of $X \subseteq V$ onto $G$ is the set $\{ x_1 \in V(G) \ | \ x_1 x_2 \in X \}$. Similarly, the projection of $X \subseteq V$ onto $H$ is the set $\{ x_2 \in V(H) \ | \ x_1 x_2 \in X \}$.

\begin{thm}
Let $G$ and $H$ be nontrivial connected graphs and $\ell \geq 1$.
\begin{enumerate}
\item If $S$ is an $\ell$-solid-resolving set of $G\Box H$, then the projection of $S$ onto $G$ (respectively onto $H$) is an $\ell$-solid-resolving of $G$ (respectively of $H$).
\item If $T$ is an $\ell$-solid-resolving set of $G$ and $U$ is an $\ell$-solid-resolving set of $H$, then $T\times U$ is an $\ell$-solid-resolving of $G\Box H$.
\item We have $\max \{\beta_{\ell}^s(G), \beta_{\ell}^s(H)\}\leq \beta_{\ell}^s(G\Box H)\leq  \beta_{\ell}^s(G)\cdot \beta_{\ell}^s(H)$.
\end{enumerate}
\end{thm}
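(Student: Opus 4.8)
The plan is to base everything on the characterisation of Theorem~\ref{thm:superb} together with the additive distance formula $d(av,bu)=d_G(a,b)+d_H(v,u)$, and to deduce part~3 as a formal consequence of parts~1 and~2. Writing a vertex of $G\Box H$ as $ab$ with $a\in V(G)$ and $b\in V(H)$, I would prove part~1 for the projection $S_G$ by a lifting argument. Given $a\in V(G)$ and nonempty $Y\subseteq V(G)$ with $a\notin Y$ and $|Y|\le\ell$, fix an arbitrary $h_0\in V(H)$ and consider the vertex $ah_0$ and the set $Yh_0=\{yh_0 : y\in Y\}$ in $G\Box H$. Since $a\notin Y$ we have $ah_0\notin Yh_0$ and $|Yh_0|=|Y|\le\ell$, so Theorem~\ref{thm:superb} applied to the $\ell$-solid-resolving set $S$ gives some $s_1s_2\in S$ with $d(s_1s_2,ah_0)<d(s_1s_2,Yh_0)$. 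By the distance formula the common term $d_H(s_2,h_0)$ cancels on both sides, leaving $d_G(s_1,a)<d_G(s_1,Y)$ with $s_1\in S_G$; thus $S_G$ meets the condition of Theorem~\ref{thm:superb}. The statement for $S_H$ is symmetric.

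For part~2 I would again verify Theorem~\ref{thm:superb}, now for $T\times U$. Fix $ab$ and nonempty $Y\subseteq V(G\Box H)$ with $ab\notin Y$ and $|Y|\le\ell$. The crucial idea is to resolve each coordinate against the \emph{other} values appearing in $Y$: set $C=\{c : cw\in Y\text{ for some }w,\ c\ne a\}$ and $W=\{w : cw\in Y\text{ for some }c,\ w\ne b\}$. These avoid $a$ and $b$ respectively and have size at most $\ell$, so when nonempty Theorem~\ref{thm:superb} yields $t\in T$ with $d_G(t,a)<d_G(t,c)$ for every $c\in C$ and $u\in U$ with $d_H(u,b)<d_H(u,w)$ for every $w\in W$ (take $t$ or $u$ arbitrary if the relevant set is empty). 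For $s=tu$ and any $cw\in Y$ the quantity $d(s,cw)-d(s,ab)$ equals $(d_G(t,c)-d_G(t,a))+(d_H(u,w)-d_H(u,b))$; each summand is nonnegative, being $0$ when the coordinate equals $a$ or $b$ and at least $1$ otherwise, and since $cw\ne ab$ at least one coordinate differs, so the total is at least $1$. Hence $d(s,ab)<d(s,Y)$ and $T\times U$ is $\ell$-solid-resolving.

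The step I expect to be the main obstacle is precisely this combination. Resolving the first coordinate through $T$ and the second through $U$ independently does not work, because an element of $Y$ resolved only in, say, the $G$-coordinate may be very close in the $H$-coordinate, so the cross term can overwhelm the gain. The remedy is the asymmetric choice above: resolving $a$ from all first coordinates \emph{except} $a$ (and $b$ from all second coordinates except $b$) forces the matching coordinate to contribute exactly $0$ rather than a negative amount, so the strict gain in the differing coordinate always survives. I would also dispose of the degenerate cases in which $C$ or $W$ is empty — then every element of $Y$ agrees with $ab$ in one fixed coordinate and a single use of Theorem~\ref{thm:superb} suffices — and note that $C$ and $W$ cannot both be empty, since $Y$ is nonempty and $ab\notin Y$.

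Part~3 is then immediate. Taking $\ell$-solid-metric bases $T$ of $G$ and $U$ of $H$, part~2 makes $T\times U$ an $\ell$-solid-resolving set of $G\Box H$ of cardinality $\beta_\ell^s(G)\cdot\beta_\ell^s(H)$, giving the upper bound. For the lower bound, an $\ell$-solid-metric basis $S$ of $G\Box H$ projects by part~1 to an $\ell$-solid-resolving set $S_G$ of $G$, so $\beta_\ell^s(G)\le|S_G|\le|S|=\beta_\ell^s(G\Box H)$, and symmetrically for $H$; together these yield $\max\{\beta_\ell^s(G),\beta_\ell^s(H)\}\le\beta_\ell^s(G\Box H)$.
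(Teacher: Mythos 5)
Your proposal is correct, and parts 1 and 3 coincide with the paper's own argument: the same lifting of $(a,Y)$ to $\bigl(ah_0,\,Y\times\{h_0\}\bigr)$ with cancellation of the common term $d_H(\cdot,h_0)$, and the same derivation of the dimension bounds from parts 1 and 2. In part 2 you deviate from the paper in one detail, and the deviation is an improvement. The paper applies Theorem~\ref{thm:superb} directly to the full projections $Y_G$ and $Y_H$, asserting the existence of $t\in T$ with $d_G(t,a)<d_G(t,Y_G)$ and of $u\in U$ with $d_H(u,b)<d_H(u,Y_H)$; this invocation is legitimate only when $a\notin Y_G$ and $b\notin Y_H$, which does not follow from $ab\notin Y$. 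For instance, if $Y=\{ab'\}$ with $b'\neq b$, then $a\in Y_G$ and no such $t$ can exist, so the paper's proof silently skips this case (the theorem of course still holds there). Your sets $C=Y_G\setminus\{a\}$ and $W=Y_H\setminus\{b\}$ are exactly the repair: resolving $a$ only against $C$ and $b$ only against $W$ (with the arbitrary choice when $C$ or $W$ is empty) makes each coordinate difference $d_G(t,c)-d_G(t,a)$ and $d_H(u,w)-d_H(u,b)$ nonnegative, equal to zero precisely on a matching coordinate, and since every $cw\in Y$ differs from $ab$ in at least one coordinate, the sum is strictly positive, giving $d(tu,ab)<d(tu,Y)$ as required by \eqref{eq:superb}. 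Notably, the paper itself uses this very device in its proof of the companion theorem on $\{\ell\}$-resolving sets of $G\Box H$, where it resolves against $Y_H\setminus\{h_1\}$ and makes an explicit agreement for the empty case; your part 2 simply imports that care into the solid-resolving setting. So: same overall strategy, but your version of the key step is the watertight one.
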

\begin{proof}
1. Let $a\in V(G)$ and $Y\subseteq V(G)$, $|Y| \leq \ell$, and let $h_0\in V(H)$ be a fixed vertex. Let $ah_0\in V(G\Box H)$ and $Y_0=Y\times \{h_0\}$. Now $|Y_0| \leq \ell$ and there exists $s=g_s h_s \in S$ such that
    \begin{align*}
    d_G(g_s,a)+d_H(h_s,h_0) & = d(g_s h_s, a h_0)<d(g_s h_s,Y_0)\\
    & = \min\{ d_G(g_s,y)+d_H(h_s,h_0) \ | \ y \in Y\} \\
    & = \min\{ d_G(g_s,y) \ | \ y \in Y\} + d_H(h_s,h_0).
    \end{align*}
Therefore, $d_G(g_s,a)<\min\{ d_G(g_s,y) \ | \ y \in Y\} = d_G(g_s,Y)$, as desired.

2. Let $a b \in V(G\Box H)$ and $Y\subseteq V(G\Box H)$ such that $|Y| \leq \ell$. Then the projections $Y_G$ and $Y_H$ of $Y$ onto $G$ and $H$, respectively, satisfy $|Y_G| , |Y_H| \leq \ell$. Therefore, there exist $t\in T$ and $u\in U$ such that $d_G(t,a) < d_G(t,Y_G)=\min \{ d_G(t,y_g) \ | \ y_g \in Y_G\}$ and  $d_H(u,b) < d_H(u,Y_H)=\min \{ d_H(u,y_h)\ | \ y_h \in Y_H\}$.

Note that $\min \{ d_G(t,y_g) \ | \ y_g \in Y_G\}+\min \{ d_H(u,y_h) \ | \ y_h \in Y_H\} \leq \min\{ d_G(t,\alpha)+d_H(u,\beta) \ | \ \alpha \beta \in Y\} = \min \{ d(t u,\alpha \beta) \ | \ \alpha \beta \in Y\}=d(t u ,Y).$

Finally, $d(tu,ab)=d_G(t,a)+d_H(u,b)<\min \{ d_G(t,y_g) \ | \ y_g \in Y_G\}+\min \{ d_H(u,y_h) \ | \ y_h\in Y_H\}\leq d(tu,Y)$, as desired.

3. The lower bound follows from 1. and the upper bound follows from 2.
\end{proof}

Notice that in 2., it would be sufficient that the set $U$ satisfies the condition \eqref{eq:superb} with equality, that is, for all $x \in V(H)$ and nonempty $Y \subseteq V(H)$ such that $x \notin Y$ and $|Y| \leq \ell$ there exists $u\in U$ such that $d_H (u,x) \leq d_H (u,Y)$.

\begin{thm}
Let $G$ and $H$ be nontrivial connected graphs and $\ell \geq 2$.
\begin{enumerate}
\item If $S$ is an $\{\ell\}$-resolving set of $G\Box H$, then the projection of $S$ onto $G$ (respectively onto $H$) is an $\{\ell\}$-resolving of $G$ (respectively of $H$).
\item If $S$ is an $\{\ell\}$-resolving set of $G$ (respectively of $H$) and $S'$ is an $\ell$-solid-resolving set of $H$ (respectively of $G$), then $S\times S'$ (respectively $S'\times S$) is a $\{\ell\}$-resolving set of $G\Box H$.
\item We have $\max \{\beta_{\ell}(G), \beta_{\ell}(H)\}\leq \beta_{\ell}(G\Box H)\leq \min \{ \beta_{\ell}(G)\cdot \beta_{\ell}^s(H),  \beta_{\ell}^s(G) \cdot \beta_{\ell}(H)\}$.
\end{enumerate}
\end{thm}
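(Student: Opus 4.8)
The plan is to handle the three parts in turn. For part~1 I would reuse the projection technique from the $\ell$-solid case. Writing $S_G$ for the projection of $S$ onto $G$, take distinct nonempty $X,Y\subseteq V(G)$ with $|X|,|Y|\le\ell$, fix $h_0\in V(H)$, and lift to $X_0=X\times\{h_0\}$ and $Y_0=Y\times\{h_0\}$. These are distinct and of size at most $\ell$, so since $S$ is $\{\ell\}$-resolving there is $s=g_sh_s\in S$ with $d(s,X_0)\neq d(s,Y_0)$. As $d(s,X_0)=d_H(h_s,h_0)+d_G(g_s,X)$ and likewise for $Y_0$, the shared term cancels and $d_G(g_s,X)\neq d_G(g_s,Y)$ with $g_s\in S_G$, so $S_G$ is $\{\ell\}$-resolving of $G$; the argument for $H$ is symmetric.

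For part~2 I would first reduce to the essential case. By Theorem~\ref{thm:solidrs}(ii), $S$ is $(\ell-1)$-solid-resolving of $G$, and directly from Definition~\ref{def:lsolid} the $\ell$-solid-resolving set $S'$ is also $(\ell-1)$-solid-resolving of $H$; the preceding product theorem for $\ell$-solid-resolving sets then makes $S\times S'$ an $(\ell-1)$-solid-resolving set of $G\Box H$. By the observation following Theorem~\ref{thm:solidrs}, it remains to separate distinct sets $X,Y$ with $|X|=|Y|=\ell$. Fix such $X,Y$ and a vertex $ab\in X\setminus Y$; writing $Y_G,Y_H$ for the projections of $Y$, I aim to find $t\in S$, $u\in S'$ with $d_G(t,a)+d_H(u,b)<d_G(t,y_1)+d_H(u,y_2)$ for every $y_1y_2\in Y$, which forces $d(tu,X)<d(tu,Y)$. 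Since each $y_1y_2\in Y$ differs from $ab$ in at least one coordinate, it is enough to arrange $d_G(t,a)\le d_G(t,y_1)$ for all $y_1\in Y_G$ with strict inequality when $y_1\neq a$, and $d_H(u,b)\le d_H(u,y_2)$ for all $y_2\in Y_H$ with strict inequality when $y_2\neq b$: a strict gain in the coordinate where the point differs, combined with a non-strict bound in the other, yields the required strict inequality.

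The inequalities in the $H$-coordinate come essentially for free: applying Theorem~\ref{thm:superb} to $b$ and $Y_H\setminus\{b\}$ (which has at most $\ell$ vertices) produces $u$ with $d_H(u,b)<d_H(u,y_2)$ for all $y_2\in Y_H\setminus\{b\}$. For the other side I would apply Lemma~\ref{lem:superbl} to $a$ and $Y_G\setminus\{a\}$, obtaining a suitable $t$ provided $|Y_G\setminus\{a\}|\le\ell-1$, that is, whenever $a\in Y_G$ or $|Y_G|<\ell$. The step I expect to be the main obstacle is the remaining configuration $a\notin Y_G$ with $|Y_G|=\ell$, where Lemma~\ref{lem:superbl} no longer delivers the needed domination of $a$ over the full $\ell$-element set $Y_G$. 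Here I would instead exploit that $a\in X_G\setminus Y_G$ forces $X_G\neq Y_G$: since $S$ is $\{\ell\}$-resolving of $G$ and $|X_G|,|Y_G|\le\ell$, some $t_0\in S$ satisfies $d_G(t_0,X_G)\neq d_G(t_0,Y_G)$, and after possibly swapping the roles of $X$ and $Y$ the closer projection supplies a vertex $a^*b^*$ of the symmetric difference with $d_G(t_0,a^*)<d_G(t_0,y_1)$ for every $y_1$ in the other projection. Using $a^*b^*$ as the distinguishing vertex, the $G$-inequalities hold strictly for the whole competing projection, and the solid side is again supplied by Theorem~\ref{thm:superb}; this completes the separation of size-$\ell$ sets.

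Part~3 is then immediate. The lower bound follows from part~1, because the projection of an $\{\ell\}$-metric basis of $G\Box H$ onto $G$ (respectively $H$) is an $\{\ell\}$-resolving set of cardinality at most $\beta_\ell(G\Box H)$, giving $\beta_\ell(G),\beta_\ell(H)\le\beta_\ell(G\Box H)$. For the upper bound, part~2 applied to an $\{\ell\}$-metric basis of $G$ and an $\ell$-solid-metric basis of $H$ yields an $\{\ell\}$-resolving set of $G\Box H$ of size $\beta_\ell(G)\cdot\beta_\ell^s(H)$, and symmetrically one of size $\beta_\ell^s(G)\cdot\beta_\ell(H)$; the minimum of these two bounds is the claimed upper bound.
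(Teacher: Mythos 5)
Your proposal is correct, and its essential machinery coincides with the paper's: part 1 is the same lifting argument via $X_0 = X\times\{h_0\}$, part 3 follows identically from parts 1 and 2, and in part 2 you separate $X$ from $Y$ through a vertex of the symmetric difference by combining a strict inequality in one coordinate with a non-strict one in the other --- the strict one coming either from Lemma \ref{lem:superbl} or from the $\{\ell\}$-resolving property of $S$ applied to the projections, and the non-strict one from Theorem \ref{thm:superb} applied to $S'$ --- which is exactly what the paper does. The differences are organizational. First, you reduce to pairs of sets of cardinality exactly $\ell$ by observing (via Theorem \ref{thm:solidrs} and the preceding product theorem for solid-resolving sets) that $S\times S'$ is $(\ell-1)$-solid-resolving; the paper skips this reduction and treats arbitrary cardinalities directly, choosing its distinguished vertex in $X \triangle Y$ rather than in $X\setminus Y$. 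Your reduction buys you that $X\setminus Y$ and $Y\setminus X$ are both nonempty, so fixing $ab\in X\setminus Y$ and later swapping the roles of $X$ and $Y$ are always legitimate moves. Second, your case split --- on whether $|Y_G\setminus\{a\}|\le \ell-1$ so that Lemma \ref{lem:superbl} applies, with the residual case $a\notin Y_G$, $|Y_G|=\ell$ forcing $X_G\neq Y_G$ and handled by the resolving property of $S$ in $G$ --- is a rearrangement of the paper's split on $X_G = Y_G$ versus $X_G\neq Y_G$; in both proofs each case ends with the same chain $d(tu,X)\le d(tu,a^{*}b^{*}) < d(tu,Y)$. So the two arguments differ only in bookkeeping, yours trading one extra invocation of earlier theorems for a slightly cleaner residual case analysis.
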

\begin{proof}
1. Let $S$ be an $\{\ell\}$-resolving set of $G\Box H$, and $X$ and $Y$ be subsets of $V(G)$ such that $X \neq Y$, $1 \leq |X| \leq \ell$ and $1 \leq |Y| \leq \ell$. Define $X_0 = X \times \{h_0\}$ and $Y_0 = Y \times \{h_0\}$, where $h_0 \in H$. Clearly, we have $X_0 \neq Y_0$, $|X| = |X_0|$ and $|Y| = |Y_0|$. Hence, there exists a vertex $s = g_s h_s \in S$ such that $d(s, X_0) \neq d(s, Y_0)$. Therefore, as $d(s, X_0) = d_G(g_s, X) + d_H(h_s, h_0)$ and $d(s, Y_0) = d_G(g_s, Y) + d_H(h_s, h_0)$, we obtain that $d_G(g_s, X) \neq d_G(g_s, Y)$. Thus, the projection of $S$ onto $G$ is an $\{\ell\}$-resolving set of $G$. Analogously, it can be shown that the projection of $S$ onto $H$ is an $\{\ell\}$-resolving set of $H$.

2. Let $S$ be an $\{\ell\}$-resolving set of $G$ and $S'$ be an $\ell$-solid-resolving set of $H$. Assume that $X, Y \subseteq V(G\Box H)$ are such that $X \neq Y$, $1 \leq |X| \leq \ell$ and $1 \leq |Y| \leq \ell$. Denote $X = \{g_1 h_1, \ldots, g_k h_k\}$ and $Y = \{g'_1 h'_1, \ldots, g'_{k'} h'_{k'} \}$, where $k = |X|$, $k' = |Y|$, $g_i, g'_i \in V(G)$ and $h_i, h'_i \in V(H)$. Further denote $X_G = \{g_1, \ldots, g_k\}$ and $Y_G = \{g'_1, \ldots, g'_{k'}\}$, and $X_H = \{h_1, \ldots, h_k\}$ and $Y_H = \{h'_1, \ldots, h'_{k'}\}$. The proof now divides into the following two cases:
\begin{itemize}
\item Suppose that $X_G \neq Y_G$. Now there exists a vertex $s \in S$ such that $d_G(s, X_G) \neq d_G(s, Y_G)$. Without loss of generality, we may assume that $d_G(s, g_1) = d_G(s, X_G) < d_G(s, Y_G)$. Observe that by the condition~\eqref{eq:superb} there exists $s' \in S'$ such that $d_H(s', h_1) < d_H(s', h)$ for any $h \in Y_H \setminus \{h_1\}$ since $|Y_H \setminus \{h_1\}| \leq \ell$; we agree that if $Y_H \setminus \{h_1\} = \emptyset$, then any $s' \in S'$ meets the required (empty) condition (similar agreement is also made in the case with $X_G = Y_G$). Thus, we have a vertex $s' \in S$ satisfying $d_H(s', h_1) = d_H(s', Y_H)$. Therefore, we obtain that $d(ss', X) \leq d(ss', g_1 h_1) = d_G(s,g_1) + d_H(s',h_1) < d_G(s, Y_G) + d_H(s', Y_H) \leq  d(ss', Y)$.

\item Suppose that $X_G = Y_G$. Since $X \neq Y$,  we have $X \triangle Y = (X \setminus Y) \cup (Y \setminus X) \neq \emptyset$ and, without loss of generality, we may assume that $g_1 h_1 \in X \triangle Y$. By the condition~\eqref{eq:superbl}, there exists $s \in S$ such that $d_G(s, g_1) < d_G(s, g)$ for any $g \in Y_G \setminus \{g_1\}$ since $|Y_G \setminus \{g_1\}| \leq \ell -1$. Analogously, by~\eqref{eq:superb}, there exists $s' \in S'$ such that $d_H(s', h_1) < d_H(s', h')$ for any $h' \in Y_H \setminus \{h_1\}$ since $|Y_H \setminus \{h_1\}| \leq \ell$. 
For any $g'_i h'_i \in Y$ we have $g'_i \neq g_1$ or $h'_i \neq h_1$ since $g_1 h_1 \notin Y$.
Now $d(ss', X) \leq d(ss', g_1 h_1) = d_G(s, g_1) + d_H(s', h_1) < d_G(s, g'_i) + d_H(s', h'_i) = d(ss', g'_i h'_i)$ for any $g'_i h'_i \in Y$. Hence, we have shown that $d(ss', X) < d(ss', Y)$.
\end{itemize}
Thus, $S \times S'$ is an $\{\ell\}$-resolving set of $G\Box H$. The other claim can be proven analogously.

3. The lower bound follows from 1. and the upper bound follows from 2.
\end{proof}

\subsection{The Rook's Graph $K_m \Box K_n$}

The graph $K_m \Box K_n$ can be illustrated as a grid, see Figure \ref{fig:rook}. A \emph{column} of $K_m \Box K_n$ is the set $\{vu \ | \ u \in V(K_n)\}$ for some fixed $v \in V(K_m)$. Similarly, a \emph{row} of $K_m \Box K_n$ is the set $\{vu \ | \ v \in V(K_m)\}$ for some fixed $u \in V(K_n)$. Two vertices are adjacent if and only if they are on the same row or column. Moreover, if two distinct vertices $x$ and $y$ are on different rows and columns, we have $d(x,y) = 2$. 

Consider any $K_m \Box K_n$ where $m,n \geq 2$. Let $x$, $y$ and $z$ be distinct vertices such that $x$ and $y$ are on the same column, and $x$ and $z$ are on the same row. Any neighbour of $x$ is in the closed neighbourhood of either $y$ or $z$. Thus, we have $N(x) \subseteq N[\{y,z\}]$. According to Theorems \ref{thm:solfor} and \ref{thm:resfor}, $x$ is a forced vertex for $\ell$-solid-resolving sets when $\ell \geq 2$ and $\{\ell\}$-resolving sets when $\ell \geq 3$. Consequently, $\beta_\ell^s (K_m \Box K_n) = mn$ for all $\ell \geq 2$ and $\beta_\ell (K_m \Box K_n) = mn$ for all $\ell \geq 3$. 

The 1-solid- and $\{1\}$-metric dimensions of $K_m \Box K_n$ were considered in \cite{Solid} and \cite{Caceres07}, respectively. Thus, the only $\ell$-solid- or $\{\ell\}$-metric dimension of $K_m \Box K_n$ yet to be determined is the $\{2\}$-metric dimension. In what follows, we show a characterisation for the $\{2\}$-resolving sets of $K_m \Box K_n$. As it turns out, this characterisation provides us an exciting connection between combinatorial designs and $\{2\}$-resolving sets of $K_m \Box K_n$.

A \emph{quadruple} of $K_m \Box K_n$ is the set $\{av,au,bv,bu\}$ where $a,b \in V(K_m)$ and $v,u \in V(K_n)$ are distinct. For example, in $K_7 \Box K_7$ illustrated in Figure \ref{fig:rook}, the set $\{v_1u_1 , v_1u_3 , v_4u_1 , v_4u_3 \}$ is a quadruple, and we can see that these four vertices lie on the corners of a rectangle.

\begin{lem}\label{lem:quadruple}
Let $m,n \geq 2$. If the set $S$ is a $\{2\}$-resolving set of $K_m \Box K_n$, then each quadruple contains at least one element of $S$.
\end{lem}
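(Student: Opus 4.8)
The plan is to argue by contraposition: assuming that some quadruple $Q = \{av, au, bv, bu\}$ (with $a \neq b$ in $V(K_m)$ and $u \neq v$ in $V(K_n)$) is disjoint from $S$, I will exhibit two distinct sets of size $2$ that $S$ fails to separate, contradicting Definition \ref{def:lresolving}. The natural candidates are the two \emph{diagonals} of the rectangle: let $X = \{av, bu\}$ and $Y = \{au, bv\}$. These are distinct (for instance $av \in X \setminus Y$, since $a \neq b$ and $u \neq v$) and both have cardinality $2$, so it suffices to show $\D_S(X) = \D_S(Y)$.

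Fixing an arbitrary $s \in S$, the task reduces to verifying that $d(s, X) = d(s, Y)$, that is, $\min\{d(s, av), d(s, bu)\} = \min\{d(s, au), d(s, bv)\}$. Writing $s = ch$ with $c \in V(K_m)$ and $h \in V(K_n)$, I use $d(ch, a'v') = d_{K_m}(c, a') + d_{K_n}(h, v')$, where each factor equals $0$ if the coordinates agree and $1$ otherwise. Abbreviating these four $\{0,1\}$-indicators expresses each of the four corner distances as a "row" contribution ($c$ against $a$ or $b$) plus a "column" contribution ($h$ against $v$ or $u$). Both diagonals carry the same total of these contributions, so equality of the two minima is exactly what must be checked.

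The decisive observation is that $s \notin Q$ means $s$ is not one of the four corners, which forces either $c \notin \{a, b\}$ or $h \notin \{u, v\}$. In the first case the row contributions of all four corners equal $1$, so both minima reduce to $1$ plus the minimum of the two column contributions; in the second case the column contributions are all $1$ and both minima reduce to $1$ plus the minimum of the two row contributions. Either way $d(s, X) = d(s, Y)$. Since every $s \in S$ lies outside $Q$ by hypothesis, this holds for all $s \in S$, giving $\D_S(X) = \D_S(Y)$ and the desired contradiction.

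I do not anticipate a serious obstacle once the diagonal pair $\{av, bu\}$ versus $\{au, bv\}$ is chosen; the verification is a short two-case check driven entirely by the fact that a non-corner vertex of the grid misses at least one of the two rows or at least one of the two columns spanned by $Q$. The only points requiring care are confirming that $X$ and $Y$ are genuinely distinct sets of size $2$ (so that they form a legitimate counterexample to $\{2\}$-resolvability) and applying the product distance formula correctly; both are immediate given $a \neq b$ and $u \neq v$.
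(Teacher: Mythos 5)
Your proof is correct and follows essentially the same approach as the paper: both compare the two diagonals $X = \{av, bu\}$ and $Y = \{au, bv\}$ of the quadruple and show that no vertex outside $Q$ can distinguish them. The only difference is in the verification detail---the paper observes $N[X] = N[Y]$ and uses the diameter-$2$ bound, while you compute the distances explicitly via the product formula with a two-case analysis on whether $s$ shares a row or column with $Q$; both verifications are valid and equally short.
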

\begin{proof}
Let $Q = \{av,au,bv,bu\} \subseteq V(K_m \Box K_n)$ be a quadruple that does not contain any elements of $S$. Let us denote $X=\{av,bu\}$ and $Y = \{au,bv\}$. Since $N[X] = N[Y]$, we have $d(s,X) = 1$ if and only if $d(s,Y) = 1$ for all $s \in S$. Consequently, $\D_S (X) = \D_S (Y)$ and $S$ is not a $\{2\}$-resolving set of $K_m \Box K_n$, a contradiction.
\end{proof}

In the following theorem, we show that there are two types of $\{2\}$-resolving sets of $K_m \Box K_n$.

\begin{thm}\label{thm:rookconds}
Let $m,n \geq 2$. If the set $S$ is a $\{2\}$-resolving set of $K_m \Box K_n$, then  
\begin{enumerate}
\item the set $\{v\} \cup (V \setminus N(v))$ is a subset of $S$ for some $v \in V$ or
\item each row and column contains at least two elements of $S$ and each quadruple contains at least one element of $S$.
\end{enumerate}
\end{thm}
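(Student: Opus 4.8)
The plan is to first translate the distance-array equality into a purely set-theoretic condition. Since $K_m \Box K_n$ has diameter $2$ whenever $m,n \geq 2$, every entry of a distance array lies in $\{0,1,2\}$, and for a set $X$ with $|X| \leq 2$ we have $d(s,X)=0$ exactly when $s \in X$ and $d(s,X) \leq 1$ exactly when $s \in N[X]$. Hence, for two such sets, $\D_S(X)=\D_S(Y)$ \emph{if and only if} $S \cap X = S \cap Y$ and $S \cap N[X] = S \cap N[Y]$. I would use this reformulation throughout: to exhibit a forbidden collision it suffices to produce distinct $X,Y$ (each of size at most $2$) that agree on both $S \cap (\cdot)$ and $S \cap N[\cdot]$. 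To set up the dichotomy, note that by Lemma~\ref{lem:quadruple} every quadruple already meets $S$, so the quadruple clause of alternative~2 holds automatically; thus if alternative~2 fails it must be the line condition that fails, i.e.\ some row or column meets $S$ in at most one vertex. Using $K_m \Box K_n \cong K_n \Box K_m$, I may assume without loss of generality that a column $C_a = \{au \mid u \in V(K_n)\}$ satisfies $|S \cap C_a| \leq 1$, and the task becomes to prove that alternative~1 holds.

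First I would rule out $|S \cap C_a| = 0$. Suppose $C_a$ is $S$-free. If some row, say row $u_2$, contains no element of $S$, pick any $u_1 \neq u_2$; then $N[au_1]$ and $N[\{au_1,au_2\}]$ both meet $S$ in $S \cap (\text{row }u_1)$, so $\{au_1\}$ and $\{au_1,au_2\}$ collide. Otherwise every row meets $S$; choosing $au_1 \in C_a$ and any $s \in S$ lying in row $u_1$, we get $S \cap N[au_1] = S \cap (\text{row }u_1) \subseteq S \cap N[s]$, whence $\{s\}$ and $\{s,au_1\}$ collide. Either way $S$ would not be $\{2\}$-resolving, so $C_a$ contains exactly one vertex, say $au^*$; in particular every $au$ with $u \neq u^*$ lies outside $S$.

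The crux of the argument, and the step I expect to require the most care, is to show that every vertex outside $S$ lies in $C_a \cup (\text{row }u^*)$. Suppose not: there is some $bu' \notin S$ with $b \neq a$ and $u' \neq u^*$. Since $u' \neq u^*$, the vertex $au'$ is also outside $S$. I claim the distinct pairs $X = \{bu^*, au'\}$ and $Y = \{bu^*, bu'\}$ collide. Indeed $S \cap X = S \cap Y = S \cap \{bu^*\}$, because $au', bu' \notin S$; and $N[X] = (\text{col }a) \cup (\text{col }b) \cup (\text{row }u^*) \cup (\text{row }u')$ while $N[Y] = (\text{col }b) \cup (\text{row }u^*) \cup (\text{row }u')$, so $N[X] \setminus N[Y] = \{au \mid u \neq u^*,\ u \neq u'\}$, which is disjoint from $S$ since $au^*$ is the only element of $S$ in column $a$. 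Hence $S \cap N[X] = S \cap N[Y]$, contradicting the hypothesis that $S$ is $\{2\}$-resolving.

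Therefore $V \setminus S \subseteq C_a \cup (\text{row }u^*)$. Since $N(au^*) = (C_a \cup \text{row }u^*) \setminus \{au^*\}$ and $au^* \in S$, this is precisely the statement $\{au^*\} \cup (V \setminus N(au^*)) \subseteq S$, so alternative~1 holds with $v = au^*$. I would finally remark that although the intuition (for $n \geq 3$) is that the cross must be centred in the deficient column, the two collision constructions above are uniform in $m,n \geq 2$, so no separate treatment of small cases is needed; the only genuine obstacle is locating the pair-swap collision $\{bu^*,au'\}$ vs.\ $\{bu^*,bu'\}$ in the third paragraph, which is what forces all off-column exceptional vertices into the single row $u^*$.
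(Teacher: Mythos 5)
Your proof is correct and follows essentially the same route as the paper's: rule out an $S$-free column, then show that a column meeting $S$ in exactly one vertex $au^*$ forces every vertex off the cross $C_a \cup (\text{row } u^*)$ into $S$, using exactly the same colliding pair (your $\{bu^*,au'\}$ vs.\ $\{bu^*,bu'\}$ is the paper's $Y=\{bu,at\}$ vs.\ $X=\{bu,bt\}$), and finish the remaining case with Lemma~\ref{lem:quadruple}. The differences are only presentational: your diameter-2 reformulation of array equality as $S\cap X = S\cap Y$ together with $S\cap N[X] = S\cap N[Y]$, and your contrapositive phrasing, replace the paper's direct distance comparisons and its appeal to Lemma~\ref{lem:superbl} in the $S$-free column case.
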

\begin{proof}
Suppose first that for some $a \in V(K_m)$ the column $C = \{au \ | \ u \in V(K_n)\}$ does not contain elements of $S$. Let $av \in C$ and $bv \in V \setminus C$ for some $v \in V(K_n)$. Since the column $C$ does not contain any elements of $S$, we have $N(av) \cap S \subseteq N[bv]$. Consequently, $d(s,av) \geq d(s,bv)$ for all $s \in S$, and the set $S$ is not a $\{2\}$-resolving set of $K_m \Box K_n$ according to Lemma \ref{lem:superbl}. Thus, if $S$ is a $\{2\}$-resolving set of $K_m \Box K_n$, then each column (and row, by symmetry) contains at least one element of $S$.

Suppose then that $C \cap S = \{au\}$. Let $b \in V(K_m) \setminus \{a\}$ and $t \in V(K_n) \setminus \{u\}$. Consider the sets $X= \{bu,bt\}$ and $Y= \{bu,at\}$.
For some $cv \in S$, we have $d(cv,X) \neq d(cv,Y)$. As $bu$ is in both $X$ and $Y$, we have $d(cv,bt) \neq d(cv,at)$. Since $at$ and $bt$ are on the same row, $cv$ is either on the column $C$ or the column $D = \{bw \ | \ w \in V(K_n)\}$. The only element of $S$ in $C$ is $au$. However, the element $bu$ is in both $X$ and $Y$, and we have $d(au,X)=d(au,Y) = 1$. Thus, $cv$ must be in $D$. The column $D$ contains the element $bu$, and thus $d(cv,X) = d(cv,Y) = 1$ if $cv \neq bt$. Therefore, we have $cv = bt$ and $bt \in S$. Since this holds for all $b \neq a$ and $t \neq u$, we have that $w \in S$ for all $w \in V \setminus N(au)$.

In conclusion, if $S$ is a $\{2\}$-resolving set of $K_m \Box K_n$ and some column (or row) contains only one element of $S$, the set $\{v\} \cup (V \setminus N(v))$ is a subset of $S$ for some $v \in V$. If each row and column contains at least two elements of $S$, each quadruple contains at least one element of $S$ according to Lemma \ref{lem:quadruple}.
\end{proof}

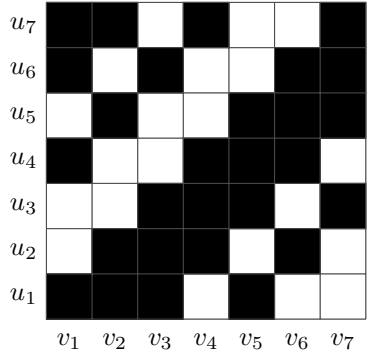
\begin{figure}
\centering
 \begin{tikzpicture}[scale=0.6]
 	\draw[black] (0,0) grid (7,7);
 	\foreach \x in {2,4,5,6} {
 	  \filldraw[fill=black, draw=black] (6,\x) rectangle (7,\x+1);
 	};
 	\foreach \x in {1,3,4,5} {
 	  \filldraw[fill=black, draw=black] (5,\x) rectangle (6,\x+1);
 	};
 	\foreach \x in {1,2,3,6} {
 	  \filldraw[fill=black, draw=black] (3,\x) rectangle (4,\x+1);
 	};
 	\foreach \x in {0,3,5,6} {
 	  \filldraw[fill=black, draw=black] (0,\x) rectangle (1,\x+1);
 	};
 	\foreach \x in {0,2,3,4} {
 	  \filldraw[fill=black, draw=black] (4,\x) rectangle (5,\x+1);
 	};
 	\foreach \x in {0,1,4,6} {
 	  \filldraw[fill=black, draw=black] (1,\x) rectangle (2,\x+1);
 	};
 	\foreach \x in {0,1,2,5} {
 	  \filldraw[fill=black, draw=black] (2,\x) rectangle (3,\x+1);
 	};
 	\draw[white,opacity=0.3] (0,0) grid (7,7);
 	\draw \foreach \y in {1,...,7} {
 	  (-.5,\y -.5) node[] {$u_{\y}$}
 	};
 	\draw \foreach \x in {1,...,7} {
 	  (\x -.5,-.5) node[] {$v_{\x}$}
 	};
 \end{tikzpicture}
 \caption{The graph $K_7 \Box K_7$, where $v_i \in V(K_7)$ and $u_i \in V(K_7)$. The black squares from a $\{2\}$-resolving set of $K_7 \Box K_7$.}\label{fig:rook}
\end{figure}

If $\{v\} \cup (V \setminus N(v))$ is a \emph{proper} subset of $S$ for some $v \in V$, then $S$ is a $\{2\}$-resolving set of $K_m \Box K_n$. The proof is straightforward but quite technical. The set $S$ contains almost all vertices of the graph. When the graph $K_m \Box K_n$ is sufficiently large, the condition 2. of Theorem \ref{thm:rookconds} has potential to produce significantly smaller $\{2\}$-resolving sets. To show that a set satisfying 2. is a $\{2\}$-resolving set of $K_m \Box K_n$, we need the following lemma.

\begin{lem}\label{lem:rookthree}
Let $m\geq n \geq 6$ and $S \subseteq V(K_m \Box K_n)$. If each quadruple contains at least one element of $S$, then there exists at most one row and one column that contain at most two elements of $S$.
\end{lem}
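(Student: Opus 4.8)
The plan is to pass to the complement and reformulate the quadruple hypothesis as a ``no combinatorial rectangle'' condition. Write $T = V \setminus S$. A quadruple is precisely the set of four corner vertices determined by a choice of two columns and two rows, so the assumption that every quadruple meets $S$ is equivalent to saying that $T$ contains no quadruple: there is no pair of columns and no pair of rows whose four common vertices all lie in $T$. I would then argue by contradiction, treating rows and columns symmetrically, so that it suffices to rule out two ``$T$-heavy'' rows, and, by the analogous argument, two such columns.

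For the row case, suppose two distinct rows $R_1$ and $R_2$ each contain at most two elements of $S$. Each row has exactly $m$ vertices (one in each column), so each of $R_1, R_2$ contains at least $m-2$ vertices of $T$. Let $A_i \subseteq V(K_m)$ be the set of columns in which $R_i$ carries a $T$-vertex; then $|A_1|, |A_2| \geq m-2$. By inclusion--exclusion inside the set of $m$ columns,
\[
|A_1 \cap A_2| \;\geq\; |A_1| + |A_2| - m \;\geq\; 2(m-2) - m \;=\; m - 4.
\]
Since $m \geq 6$, we obtain $|A_1 \cap A_2| \geq 2$, so there exist two distinct columns $a, b$ in which both $R_1$ and $R_2$ carry $T$-vertices. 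These two columns together with the two rows $R_1, R_2$ determine four vertices all lying in $T$, i.e.\ a quadruple disjoint from $S$, contradicting the hypothesis. Hence at most one row contains at most two elements of $S$.

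The column case is identical after exchanging the roles of rows and columns: two columns with at most two elements of $S$ each contain at least $n-2$ vertices of $T$ (a column has $n$ vertices, one in each row), and the same inclusion--exclusion over the $n$ rows gives an intersection of size at least $2(n-2) - n = n-4 \geq 2$ because $n \geq 6$, again producing a forbidden quadruple. This shows at most one column contains at most two elements of $S$, which together with the row case proves the lemma.

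A remark on where the content sits: the argument is a short double-counting one, and the only delicate point is bookkeeping the threshold. The bound $2(k-2) - k = k-4$ must be at least $2$, which forces $k \geq 6$ and is exactly why the hypotheses $m \geq n \geq 6$ appear (the row half needs $m \geq 6$, the column half needs $n \geq 6$). I do not anticipate any genuine obstacle beyond keeping the per-row and per-column counts of $S$- versus $T$-vertices straight, and making sure that ``at least two common $T$-columns'' (respectively rows) is correctly converted into an actual quadruple lying entirely in $T$.
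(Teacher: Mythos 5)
Your proof is correct and is essentially the paper's own argument: the paper also argues by contradiction, partitioning two deficient rows into the $m$ column-pairs and noting that at most four of these pairs can meet $S$, so at least $m-4\geq 2$ pairs are $S$-free and yield a forbidden quadruple. Your inclusion--exclusion bound $|A_1\cap A_2|\geq 2(m-2)-m=m-4$ is just a complement-side rephrasing of that same pigeonhole count, with the identical symmetric treatment of columns using $n\geq 6$.
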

\begin{proof}
Suppose to the contrary that there exist some $r,t \in V(K_n)$, $r \neq t$, such that the rows  $R = \{vr \ | \ v \in V(K_m) \}$ and $T = \{vt \ | \ v \in V(K_m) \}$ both contain at most two elements of $S$. Consider the two rows as partitioned into pairs $\{vr,vt\}$, where $v \in V(K_m)$. 
The rows $R$ and $T$ contain at most four elements of $S$ in total. However, we have $m \geq 6$ pairs, and thus there are at least two pairs, say $\{ar,at\}$ and $\{br,bt\}$, that do not contain an element of $S$. Now the quadruple $\{ar,at,br,bt\}$ does not contain an element of $S$, a contradiction. The claim holds for columns by symmetry.
\end{proof}

\begin{thm}\label{thm:rook}
Let $m \geq n \geq 6$ and $S \subseteq V(K_m \Box K_n)$. If each row and column contains at least two elements of $S$ and each quadruple contains at least one element of $S$, then the set $S$ is a $\{2\}$-resolving set of $K_m \Box K_n$.
\end{thm}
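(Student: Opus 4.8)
The plan is to combine the two reductions established earlier with the combinatorial hypotheses on $S$. First I would show that $S$ is a $1$-solid-resolving set of $K_m \Box K_n$, so that by the remark following Theorem~\ref{thm:solidrs} it only remains to separate vertex sets of cardinality exactly two. To prove the $1$-solid property I would invoke Theorem~\ref{thm:superb} with $\ell = 1$: given distinct vertices $x$ and $y$, I must find $s \in S$ with $d(s,x) < d(s,y)$. If $x \in S$ then $s = x$ works, since $d(x,x) = 0 < d(x,y)$. Otherwise I pick $s$ on the column of $x$ when $x$ and $y$ lie on different columns, and on the row of $x$ otherwise; because each line carries at least two elements of $S$ and at most one point of the chosen line is ``bad'' (the one sharing a line with $y$), a valid $s$ always exists. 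This step uses only the hypothesis that every row and column meets $S$ at least twice.

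For the main part I would first translate equality of distance arrays into an incidence statement. For a two-element set $X$, every entry of $\D_S(X)$ lies in $\{0,1,2\}$: an $s \in S$ satisfies $d(s,X)=0$ exactly when $s \in X$, and $d(s,X) \le 1$ exactly when $s$ lies on one of the (at most four) lines through the two points of $X$. Writing $L(X)$ for the union of the rows and columns meeting $X$, this shows that $\D_S(X) = \D_S(Y)$ holds if and only if $S \cap X = S \cap Y$ and $S \cap L(X) = S \cap L(Y)$. I would record the row-set $R_X$ and column-set $C_X$ of $X$ (each of size $1$ or $2$, with $|R_X|+|C_X| \ge 3$) and reduce the whole theorem to showing that these two equalities force $X = Y$.

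Suppose then that distinct two-sets $X,Y$ satisfy both equalities. The key structural claim is that $R_X = R_Y$ and $C_X = C_Y$. To see this I would fix a row $r \in R_X \setminus R_Y$: all of its (at least two) elements of $S$ must lie in $S \cap L(X) = S \cap L(Y) \subseteq L(Y)$, and since $r$ is not a row of $Y$ they must lie on the at most two columns of $C_Y$; hence $r$ carries at most two elements of $S$, that is, $r$ is one of the exceptional ``sparse'' lines. By Lemma~\ref{lem:rookthree} there is at most one sparse row and at most one sparse column, which forces $|R_X \triangle R_Y| \le 1$ and $|C_X \triangle C_Y| \le 1$. A short analysis of the remaining possibilities (the only way a symmetric difference can equal $1$ is $\{|R_X|,|R_Y|\} = \{1,2\}$, and then the mismatched sparse row is pinned to two specific cells of $C_Y$) combines with $S \cap X = S \cap Y$ to force that sparse row to contain at most one element of $S$, contradicting the hypothesis that every row meets $S$ at least twice. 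This yields $R_X = R_Y$ and, symmetrically, $C_X = C_Y$, so $L(X) = L(Y)$.

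Finally, once $L(X) = L(Y)$ the two sets live on the same at most four lines. If these lines determine a single row or a single column, the incidences force $X = Y$ immediately; the only genuinely different option is that $L(X)$ spans two rows and two columns and $X,Y$ are the two diagonals of the resulting rectangle, i.e.\ $X$ and $Y$ are complementary diagonals of a quadruple $Q$. But then $X$ and $Y$ are disjoint, so $S \cap X = S \cap Y$ forces $S \cap X = S \cap Y = \emptyset$ and hence $S \cap Q = \emptyset$, contradicting the hypothesis that every quadruple meets $S$. This contradiction completes the proof. I expect the main obstacle to be the bookkeeping in the preceding paragraph: one must carefully track how the single permitted sparse row and single permitted sparse column interact with the sizes of $R_X, R_Y, C_X, C_Y$, and verify that in every mismatched configuration the equality $S \cap X = S \cap Y$ drives some row or column below the required two elements of $S$.
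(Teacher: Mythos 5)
Your proof is correct, but its core runs along a genuinely different line from the paper's. The skeleton is shared: both arguments first establish that $S$ is a $1$-solid-resolving set (your line-picking argument and the paper's observation about the four elements of $S$ on the row and column of $x$ are the same idea), both then invoke the remark following Theorem \ref{thm:solidrs} to reduce to distinct two-element sets $X,Y$, and both lean on Lemma \ref{lem:rookthree} and the quadruple hypothesis at the decisive moment. The divergence is in how $\D_S(X)=\D_S(Y)$ is refuted. The paper works distance by distance: it takes $x\in X\setminus Y$ and $y\in Y\setminus X$ avoiding $S$, uses Lemma \ref{lem:rookthree} to place three elements of $S$ on a line through $x$, and then splits into several subcases (whether $Y$ meets that row, whether $y_1$ or $y_2$ lies in $S$, where the second point of $X$ sits), exhibiting in each a separating vertex of $S$. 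You instead exploit the diameter-$2$ structure once and for all: $\D_S(X)=\D_S(Y)$ is equivalent to the two incidence equalities $S\cap X=S\cap Y$ and $S\cap L(X)=S\cap L(Y)$, after which you show the row and column sets of $X$ and $Y$ coincide and finish by applying the quadruple condition to the two diagonals of the resulting rectangle. Your organisation is cleaner and replaces the paper's ad hoc subcases with one structural dichotomy; the price is the bookkeeping you flag yourself, and one step there should be made explicit. In the mismatched case $R_X=\{r,r'\}$, $R_Y=\{r'\}$, pinning $S\cap r$ to the two cells where row $r$ meets the columns of $C_Y$ does not by itself clash with $S\cap X=S\cap Y$: you still need that the point of $X$ lying on row $r$ occupies one of those two cells. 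This follows from the column hypothesis --- if its column were outside $C_Y$, every element of $S$ on that column would have to lie on the single row of $R_Y$, leaving that column with at most one element of $S$ --- and then that point belongs to $S$, hence to $S\cap X=S\cap Y\subseteq Y$, contradicting the fact that $Y$ avoids row $r$. With this observation inserted, your argument is complete.
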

\begin{proof}
To prove that $S$ is a 1-solid-resolving set, it suffices to check that \eqref{eq:superb} holds for any $x \in V \setminus S$. To that end, let $x \in V \setminus S$ and $y \in V$, $y \neq x$. Both the row and column that contain $x$ also contain at least two elements of $S$. The closed neighbourhood of $y$ contains all these four elements if and only if $y=x$. Thus, for any $x \in V \setminus S$ and $y \in V$ there exists $s \in S$ such that $d(s,x) < d(s,y)$. According to Theorem \ref{thm:superb}, the set $S$ is a 1-solid-resolving set of $K_m \Box K_n$.

Let us then consider distinct sets $X,Y \subseteq V(K_m \Box K_n)$ such that $|X| = |Y| = 2$. If for some $x \in X \setminus Y$ and $y \in Y \setminus X$ we have $\{x,y\} \cap S \neq \emptyset$, then clearly $\D_S (X) \neq \D_S (Y)$.

Suppose that for some $x \in X \setminus Y$ and $y \in Y \setminus X$ we have $\{x,y\} \cap S = \emptyset$. According to Lemma \ref{lem:rookthree} at least one of $x$ and $y$ has three elements of $S$ on its row or column. Assume without loss of generality that $x$ is on the row $R$ and $R$ contains at least three elements of $S$. 
If $Y \cap R = \emptyset$, then for at least one $s \in S \cap R$ we have $d(s,X) = 1 < d(s,Y)$. Thus, $\D_S (X) \neq \D_S (Y)$.

Suppose that $Y \cap R \neq \emptyset$, and let $y_1 \in Y \cap R$. Since $x \notin Y$, the vertex $y_1$ cannot be on the same column as $x$. The column $C$ that contains $x$ contains at least two elements of $S$, say $c_1,c_2 \in C \cap S$. We have $d(c_1,x) = d(c_2,x) = 1$ and $d(c_1,y_1) = d(c_2,y_1) = 2$. Let $y_2 \in Y$, $y_2 \neq y_1$. If $y_2 \notin C$, then $d(c_1,y_2) = 2$ or $d(c_2,y_2) = 2$, and thus $\D_S (X) \neq \D_S (Y)$. 

Suppose $y_2 \in C$. Only one of $y_1$ and $y_2$ can be in $S$. Suppose $y_1 \in S$. If $y_1 \notin X$, then $\D_S (X) \neq \D_S (Y)$. Suppose $y_1 \in X$. The row $T$ that contains $y_2$ also contains at least two elements of $S$, say $t_1,t_2 \in T \cap S$. Since $y_2 \notin S$, $t_1 \neq y_2$ and $t_2 \neq y_2$, and thus $t_1,t_2 \notin C$. Now $d(t_1,X) = 2$ or $d(t_2, X) = 2$ since only one of $t_1$ and $t_2$ can be on the same column as $y_1$. Thus, $\D_S (X) \neq \D_S (Y)$.
Similarly, if $y_2 \in S$, we can prove that there is  a vertex $s \in S$ in the same column as $y_1$ such that $d(s,y_1) = 1 < d(s,X)$. 

Suppose $y_1 \notin S$ and $y_2 \notin S$. If the element $x' \in X \setminus \{x\}$ is in the intersection of the column containing $y_1$ and the row containing $y_2$, the elements $x$, $x'$, $y_1$ and $y_2$ form a quadruple. According to our assumption one of these elements is in $S$, and consequently $\D_S (X) \neq \D_S (Y)$. If $x'$ is not on the same column as $y_1$ or on the same row as $y_2$ (both of which contain two elements of $S$), we clearly have $\D_S (X) \neq \D_S (Y)$.
\end{proof}

According to Theorem \ref{thm:rook}, the set illustrated as black squares in Figure \ref{fig:rook} is a $\{2\}$-resolving set of $K_7 \Box K_7$.
The following theorem can be used to obtain a lower bound on the $\{2\}$-metric dimension of $K_m \Box K_n$. Indeed, the left side of Equation~\eqref{EqThmRookLowerBound} decreases as the size of the $\{2\}$-resolving set $S$ increases. Thus, this gives a lower bound on $|S|$.
\begin{thm} \label{ThmRookL2LowerBound}
Let $m \geq n \geq 2$. If $S$ is a $\{2\}$-resolving set of $K_m \Box K_n$, and $q$ and $r$ are integers such that $|S| = qm + r$ with $0 \leq r < m$, then
\begin{equation} \label{EqThmRookLowerBound}
r\binom{n-(q+1)}{2} + (m-r)\binom{n-q}{2} \leq \binom{n}{2} \text{.}
\end{equation}
\end{thm}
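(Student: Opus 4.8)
The plan is to recast the quadruple condition as a statement about the \emph{empty cells} of the grid and then apply a convexity (smoothing) argument. View $K_m \Box K_n$ as a grid with $m$ columns (indexed by $V(K_m)$) and $n$ rows (indexed by $V(K_n)$). For each column $a \in V(K_m)$, let $E_a \subseteq V(K_n)$ be the set of rows $u$ for which $au \notin S$, and write $e_a = |E_a|$. Since the grid has $mn$ vertices, I immediately get $\sum_{a} e_a = mn - |S|$.

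First I would translate the quadruple condition. By Lemma \ref{lem:quadruple} (and the hypothesis of the theorem), every quadruple $\{av, au, bv, bu\}$ meets $S$; equivalently, no two distinct columns $a, b$ can both be empty on the same pair of rows $\{u, v\}$. In terms of the sets $E_a$, this says exactly that $|E_a \cap E_b| \leq 1$ for all distinct $a, b \in V(K_m)$. Hence each of the $\binom{n}{2}$ pairs of rows is contained in $E_a$ for at most one column $a$, and counting the incidences $(a,\{u,v\})$ with $\{u,v\} \subseteq E_a$ in two ways yields
\begin{equation*}
\sum_{a \in V(K_m)} \binom{e_a}{2} \leq \binom{n}{2}.
\end{equation*}

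Next I would identify the distribution minimising the left-hand side. Writing $|S| = qm + r$ with $0 \le r < m$ gives $\sum_a e_a = mn - |S| = m(n-q) - r$, a fixed integer sum spread over $m$ nonnegative integers. Since $x \mapsto \binom{x}{2}$ is convex, the standard smoothing step (replacing a pair $e_i \ge e_j + 2$ by $e_i - 1,\, e_j + 1$ never increases $\sum_a \binom{e_a}{2}$) shows that the sum is minimised when the $e_a$ are as equal as possible: namely when $r$ of them equal $n - q - 1$ and the remaining $m - r$ equal $n - q$ (these values are nonnegative and sum correctly, using that $r > 0$ forces $q \le n-1$). For that balanced distribution the sum equals precisely $r\binom{n-(q+1)}{2} + (m-r)\binom{n-q}{2}$, the left-hand side of \eqref{EqThmRookLowerBound}. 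Combining minimality with the double-count bound gives
\begin{equation*}
r\binom{n-(q+1)}{2} + (m-r)\binom{n-q}{2} \;\le\; \sum_{a} \binom{e_a}{2} \;\le\; \binom{n}{2},
\end{equation*}
which is the claim.

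The routine parts are the double count and the arithmetic identifying the balanced value with the left-hand side of \eqref{EqThmRookLowerBound}. The step needing the most care is the smoothing argument: I must verify that the exchange $e_i \ge e_j+2 \mapsto (e_i-1,\, e_j+1)$ genuinely does not increase the convex sum, and that the resulting extremal configuration has integer, nonnegative, and feasible (at most $n$) entries, so that the balanced value is a legitimate lower bound for $\sum_a \binom{e_a}{2}$. I would note that $m \ge n$ serves only to fix which coordinate we partition along; running the same argument with rows and columns interchanged gives the symmetric inequality.
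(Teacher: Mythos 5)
Your proposal is correct and follows essentially the same route as the paper's proof: the paper also counts, column by column, the $\binom{n-x_i}{2}$ pairs of vertices missed by $S$, uses Lemma \ref{lem:quadruple} together with the pigeonhole principle to bound the total by $\binom{n}{2}$, and identifies the balanced distribution ($r$ columns with $q+1$ elements, $m-r$ with $q$) as the minimiser via the same exchange/convexity argument. The only cosmetic differences are that you smooth the complement counts $e_a = n - x_i$ directly and phrase the pigeonhole step as a double count of incidences $|E_a \cap E_b| \le 1$, which is a slightly tidier write-up of the identical idea.
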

\begin{proof}
Assume first that $S$ is an arbitrary subset of $V(K_m \Box K_n)$ and $q$ and $r$ are integers such that $|S| = qm + r$ with $0 \leq r < m$. Denote the columns of $K_m \Box K_n$ by $C_1, \ldots, C_m$. For $i = 1, \ldots, m$, let $x_i$ be the number of elements of $S$ in the column $C_i$, i.e., $x_i = |S \cap C_i|$. Using this notation, each column $C_i$ contains $\binom{n-x_i}{2}$ pairs of vertices not belonging to $S$. Furthermore, the number of such pairs of vertices over all the columns is equal to
\begin{equation} \label{EqRookLowerBound}
\sum_{i=1}^m \binom{n-x_i}{2} \text{.}
\end{equation}

Assume that the set $S'$ gives the minimum value of the sum \eqref{EqRookLowerBound} among the sets with $|S|$ elements. Let us then show that no column of $S'$ contains less than $q$ elements. Suppose to the contrary that there is a column $C_i$ with $|S' \cap C_i| = k_1 < q$. Since $|S'| = qm + r$, there exists a column $C_j$ with $|S' \cap C_j| = k_2 \geq q + 1$. Now we have
\[
\binom{k_1}{2} + \binom{k_2}{2} = \binom{k_1}{2} + \binom{k_2-1}{2} + (k_2-1) > \binom{k_1}{2} + k_1 + \binom{k_2-1}{2} = \binom{k_1+1}{2} + \binom{k_2-1}{2} \text{.}
\]
Hence, the elements of $S'$ in the columns $C_i$ and $C_j$ can be redistributed to obtain a set with the same number of elements as $S'$ and with a smaller sum~\eqref{EqRookLowerBound} (a contradiction). Similarly, it can be shown that no column contains at least $q+2$ elements of $S'$. Indeed, if such a column, say $C_i$ with $|S' \cap C_i| = k_1 \geq q+2$, exists, then there is a column $C_j$ with $|S' \cap C_j| = k_2 \leq q$ and as above we have
\[
\binom{k_1}{2} + \binom{k_2}{2} = \binom{k_1-1}{2} + (k_1-1) + \binom{k_2}{2}  > \binom{k_1-1}{2} + \binom{k_2}{2} + k_2 = \binom{k_1-1}{2} + \binom{k_2+1}{2}
\]
leading to a contradiction. Hence, we may assume that each column contains at least $q$ and at most $q+1$ elements of $S'$. Therefore, as $|S'| = |S| = qm + r$, there exist $r$ columns containing $q+1$ elements and $m-r$ columns containing $q$ elements of $S'$. Thus, we obtain that
\[
\sum_{i=1}^m \binom{n-x_i}{2} \geq r\binom{n-(q+1)}{2} + (m-r)\binom{n-q}{2} \text{.}
\]
Observe that the right side of this inequality decreases as the number of elements of $S$ increases.

Assume then that $S$ is a $\{2\}$-resolving set of $K_m \Box K_n$ (instead of being arbitrary). Now, due to Lemma \ref{lem:quadruple}, no two columns have two same rows without elements of $S$. This implies (by the pigeon hole principle) that
\[
r\binom{n-(q+1)}{2} + (m-r)\binom{n-q}{2} \leq \binom{n}{2} \text{.}
\]
Thus, the claim follows.
\end{proof}

The conditions of Theorem \ref{thm:rook} can also be interpreted as a certain type of design as explained in the following remark. For more on combinatorial designs, see \cite{CombDes} (specifically, parts I and IV).
\begin{remark}
Let $X$ be a set with $n$ elements and $\mathcal{B}$ be a collection of $m$ subsets called \emph{blocks} of $X$ such that (i) any block has at most $n-2$ elements, (ii) each element of $X$ is included in at most $m-2$ blocks and (iii) any pair of elements of $X$ is included in at most one block of $\mathcal{B}$. Each block of $\mathcal{B}$ represents a column of $K_m \Box K_n$; more precisely, the elements of a block correspond to the elements of a column not belonging to $S$. Observe that maximizing the total number of elements in the blocks of $\mathcal{B}$ minimizes the corresponding $\{2\}$-resolving set $S$ of $K_m \Box K_n$. Although the designs satisfying (i), (ii) and (iii) have not earlier been studied, some usual designs work nicely for our purposes:
\begin{itemize}
\item Let $n=m=7$ and $X=\{1, \ldots, 7\}$. A collection $\mathcal{B}_1 = \{\{1,2,4\}, \{1,3,7\}, \{1,5,6\}, \linebreak \{2,3,5\}, \{2,6,7\}, \{3,4,6\}, \{4,5,7\}\}$ is a (balanced incomplete block) design such that each block has $3$ elements, each element is included in $3$ blocks and any pair of elements of $X$ is included in exactly one block of $\mathcal{B}_1$. When we interpret $\mathcal{B}_1$ as explained above, we obtain a $\{2\}$-resolving set of $K_7 \Box K_7$ with 28 elements (see Figure \ref{fig:rook}). Moreover, by Theorem~\ref{ThmRookL2LowerBound}, no smaller $\{2\}$-resolving set exists. Hence, we have $\beta_2(K_7 \Box K_7) = 28$.
\item Let $n = 10$, $m = 12$ and $X = \{1, \ldots, 10\}$. A collection $\mathcal{B}_2 = \{\{1,2,3,4\}, \{1,5,6,7\}, \linebreak \{1,8,9,10\}, \{2,5,8\}, \{2,6,9\}, \{2,7,10\}, \{3,5,10\}, \{3,6,8\}, \{3,7,9\}, \{4,5,9\}, \{4,6,10\}, \linebreak \{4,7,8\}\}$ is a (pairwise balanced) design such that each block has $3$ or $4$ elements, each element is included in $3$ or $4$ blocks and any pair of elements of $X$ is included in exactly one block of $\mathcal{B}_2$. Hence, we obtain a $\{2\}$-resolving set of $K_{10} \Box K_{12}$ with 81 elements. Therefore, by Theorem~\ref{ThmRookL2LowerBound}, we have $\beta_2(K_{10} \Box K_{12}) = 81$.
\end{itemize}

Analogously, any $\{2\}$-resolving set $S$ of $K_m \Box K_n$ can be interpreted as a certain type of design. Indeed, construct a design with $m$ blocks each formed by the elements of a column not belonging to $S$. By Lemma \ref{lem:quadruple}, each such design satisfies the previous condition~(iii) and some other minor constraints depending on whether 1. or 2. of Theorem \ref{thm:rookconds} holds. 
\end{remark}

\section{Flower Snarks}

Flower snarks were first introduced by Isaacs in \cite{IsaacsSnark}. Flower snarks were one the first infinite graph families of 3-regular graphs proven to have no proper 3-edge-coloring. In \cite{ImranFlower}, flower snarks were shown to have a constant $\{1\}$-metric dimension. Let us define flower snarks with the following construction.

\begin{const*}
Let $n = 2k+1$ be an odd integer, $n \geq 5$.
\begin{enumerate}[topsep=0pt,itemsep=0pt]
\item First we draw $n$ copies of the star $K_{1,3}$. We denote by $T_i = \{ a_i, b_i, c_i, d_i \}$ the vertices of the $i$th star, where the leaves of the star are $a_i$, $c_i$ and $d_i$.

\item We connect the vertices $a_i$ by drawing the cycle $a_1 a_2 \ldots a_n a_1$. 

\item We connect the remaining leaves of the stars by drawing the cycle $c_1 c_2 \ldots c_n d_1 d_2 \ldots d_n c_1$. 
\end{enumerate}
The resulting graph is the flower snark $J_n$ with $4n$ vertices.
\end{const*}

Probably the most common way to draw a flower snark is illustrated in Figure \ref{fig:J5} for $J_5$. The graph $J_5$ (and all flower snarks in general) can be drawn as in Figure \ref{fig:J5other}. From this figure it is easy to see that the graph has many automorphisms and that the vertices $c_i$ and $d_i$ do not have any essential differences.

Any shortest path from $v\in T_i$ to $u \in T_j$ can be divided into three parts; the parts inside $T_i$ and $T_j$, and the part from $T_i$ to $T_j$. 
The part from $T_i$ to $T_j$ is usually the obvious, except for $c_1$ and $c_{k+2}$ (and isomorphic cases). For example, one shortest path between $b_1$ and $b_4$ in $J_5$ is $b_1a_1a_5a_4b_4$. However, the unique shortest path between $c_1$ and $c_4$ is $c_1c_2c_3c_4$.

In \cite{ImranFlower}, it was shown that $\beta_1 (J_n) = 3$ when $n \geq 5$. However, the proof for the upper bound $\beta_1 (J_n) \leq 3$ is erroneous. The authors claim that the set $W = \{c_1,d_1,d_k\}$ is a resolving set of $J_n$ since all vertices have unique distance arrays with respect to $W$. However, we have $\D_W (a_1) = (2,2,k+1) = \D_W (b_n)$ and $\D_W (a_k) = (k+1,k+1,2) = \D_W (b_{k+1})$. Thus, the set $W$ is not a resolving set of $J_n$. Despite this, their result holds. We can replace $d_k$ with $d_{k+1}$ in $W$, after which it is straightforward to correct the proof and verify that the new set is indeed a resolving set of $J_n$.

Our goal is to determine the $\ell$-solid- and $\{\ell\}$-metric dimensions of flower snarks. To that end, we first consider the forced vertices of flower snarks. Consider any flower snark $J_n$. Since $n \geq 5$, $J_n$ is a 3-regular graph of girth at least 5. Now, for all $v\in V$ and $U \subseteq V$, $v \notin U$, if $N(v) \subseteq N[U]$, then the set $U$ has at least three elements. Thus, no vertex of $J_n$ is forced for $\{\ell_1\}$-resolving sets or $\ell_2$-solid-resolving sets where $\ell_1 \leq 3$ and $\ell_2 \leq 2$. For all other $\ell$-solid- and $\{\ell\}$-resolving sets all vertices are forced vertices; for all $v\in V$ we can choose $U=N(v)$, and we naturally have $N(v) \subseteq N[U]$. Thus, we have the following theorem.

\begin{thm}
Let $n$ be an odd integer, $n\geq 5$. We have $\beta_\ell (J_n) = 4n$ when $\ell \geq 4$ and $\beta_\ell^s (J_n) = 4n$ when $\ell \geq 3$.
\end{thm}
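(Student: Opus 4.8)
The plan is to establish the claimed values of the metric dimensions by invoking the general theory of forced vertices developed in Theorems \ref{thm:solfor} and \ref{thm:resfor}, together with the observations about the structure of $J_n$ made immediately before the statement. The key structural fact is that $J_n$ is a $3$-regular graph (each vertex $a_i, c_i, d_i$ has degree $3$ via its cycle neighbours and its star centre, and each centre $b_i$ has degree $3$ via its three leaves). Since every vertex has degree exactly $3$, the remark preceding the theorem shows that taking $U = N(v)$ always yields $N(v) \subseteq N[U]$ with $|U| = 3$, so every vertex $v$ is forced.

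First I would record precisely which $\ell$-solid- and $\{\ell\}$-resolving sets have \emph{all} vertices forced. By Theorem \ref{thm:solfor}, $v$ is forced for an $\ell$-solid-resolving set whenever there is a set $U$ with $v \notin U$, $|U| \leq \ell$ and $N(v) \subseteq N[U]$; choosing $U = N(v)$ (which has $|U| = 3$ since $\deg(v) = 3$) shows every vertex is forced as soon as $\ell \geq 3$. Analogously, by Theorem \ref{thm:resfor}, for $\{\ell\}$-resolving sets the admissible $U$ must satisfy $|U| \leq \ell - 1$, so the same choice $U = N(v)$ forces every vertex as soon as $\ell - 1 \geq 3$, that is $\ell \geq 4$. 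Once every vertex is forced, the only $\ell$-solid- (respectively $\{\ell\}$-) resolving set is $V(J_n)$ itself, giving $\beta_\ell^s(J_n) = |V(J_n)| = 4n$ for $\ell \geq 3$ and $\beta_\ell(J_n) = 4n$ for $\ell \geq 4$.

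The one point requiring care—and the main (if modest) obstacle—is to verify that these are genuinely the minimum cardinalities, i.e.\ that $V(J_n)$ itself \emph{is} an $\ell$-solid- and $\{\ell\}$-resolving set, so that the metric dimension equals $4n$ rather than merely being bounded below by the number of forced vertices. This is immediate from the discussion in the introduction: $V(G)$ is always an $\ell$-solid- and $\{\ell\}$-resolving set for any admissible $\ell$, since $\D_V$ is injective on all vertex subsets (the zero coordinates of $\D_V(X)$ pinpoint exactly the elements of $X$). Combining the lower bound from the forced-vertex count with this trivial upper bound pins the dimensions to exactly $4n$ in the stated ranges, completing the proof.
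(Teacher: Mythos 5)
Your proposal is correct and follows essentially the same route as the paper: the authors likewise observe that $J_n$ is $3$-regular, take $U = N(v)$ in Theorems \ref{thm:solfor} and \ref{thm:resfor} to conclude that every vertex is forced for $\ell$-solid-resolving sets when $\ell \geq 3$ and for $\{\ell\}$-resolving sets when $\ell \geq 4$, and combine this with the trivial fact that $V(J_n)$ itself resolves. Your explicit remark that the zero coordinates of $\D_V$ pin down the sets (so $V(J_n)$ is genuinely a resolving set, giving the matching upper bound) is a detail the paper leaves to its introduction, but the argument is the same.
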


As for the remaining metric dimensions, we begin by considering $\{3\}$-resolving sets since, quite surprisingly, the difficulty of the proofs increases as the value of $\ell$ decreases.

\begin{figure}
\centering
\begin{subfigure}[b]{0.3\linewidth}
 \centering
 \begin{tikzpicture}[scale=0.85]
  \def\da{.6};
  \def\db{\da+\da};
  \def\dc{\db+\da};
  \def\ofs{12};
  \def\loos{1.5};
  \coordinate (a1) at (90:\da);
  \coordinate (a2) at (18:\da);
  \coordinate (a3) at (306:\da);
  \coordinate (a4) at (234:\da);
  \coordinate (a5) at (162:\da);
  \coordinate (b1) at (90:\db);
  \coordinate (b2) at (18:\db);
  \coordinate (b3) at (306:\db);
  \coordinate (b4) at (234:\db);
  \coordinate (b5) at (162:\db);
  \coordinate (c1) at (90+\ofs:\dc);
  \coordinate (c2) at (18+\ofs:\dc);
  \coordinate (c3) at (306+\ofs:\dc);
  \coordinate (c4) at (234+\ofs:\dc);
  \coordinate (c5) at (162+\ofs:\dc);
  \coordinate (d1) at (90-\ofs:\dc);
  \coordinate (d2) at (18-\ofs:\dc);
  \coordinate (d3) at (306-\ofs:\dc);
  \coordinate (d4) at (234-\ofs:\dc);
  \coordinate (d5) at (162-\ofs:\dc);
  \draw \foreach \x in {1,...,5} \foreach \y in {a,c,d} {
  		(b\x) -- (\y\x) };
  \draw (a1) -- (a2) -- (a3) -- (a4) -- (a5) -- (a1);
  \draw {
  		(c1) to[out=60,in=70,looseness=\loos] (c2) to[out=348,in=358,looseness=\loos] (c3)
  		to[out=276,in=286,looseness=\loos] (c4) to[out=204,in=214,looseness=\loos] (c5)
  		to[out=125,in=120,looseness=\loos] (d1) to[out=35,in=55,looseness=\loos] (d2)
  		to[out=323,in=343,looseness=\loos] (d3) to[out=251,in=271,looseness=\loos] (d4)
  		to[out=179,in=199,looseness=\loos] (d5) -- (c1)
  };
  \draw \foreach \x in {1,...,5} \foreach \y in {a,b,c,d} {
		(\y\x) node[circle, draw, fill=white, inner sep=0pt, minimum width=4pt] {} };
  \draw {
  		(a1) node[right] {$a_1$}
  		(a2) node[below] {$\quad a_2$}
  		(a3) node[below] {$a_3 \ $}
  		(a4) node[left] {$a_4$}
  		(a5) node[above] {$a_5$}
  		(b1) node[right] {$b_1$}
  		(b2) node[below] {$\, b_2$}
  		(b3) node[above] {$\ \, b_3$}
  		(b4) node[right] {$b_4$}
  		(b5) node[above] {$\ b_5$}
  		(c1) node[above] {$c_1 \ $}
  		(c2) node[left] {$c_2$}
  		(c3) node[above] {$c_3$}
  		(c4) node[right] {$c_4$}
  		(c5) node[below] {$\, c_5$}
  		(d1) node[above] {$\ d_1$}
  		(d2) node[below] {$d_2$}
  		(d3) node[left] {$d_3$}
  		(d4) node[above] {$d_4$}
  		(d5) node[above] {$d_5$}
  };
 \end{tikzpicture}
 \caption{The graph $J_5$.}\label{fig:J5}
\end{subfigure}
\hfill
\begin{subfigure}[b]{0.22\linewidth}
 \centering
 \begin{tikzpicture}[scale=0.75]
  \def\da{1};
  \def\db{2};
  \def\dc{\db};
  \def\ofs{24};
  \def\loos{1.5};
  \coordinate (a1) at (90:\da) {};
  \coordinate (a2) at (18:\da) {};
  \coordinate (a3) at (306:\da) {};
  \coordinate (a4) at (234:\da) {};
  \coordinate (a5) at (162:\da) {};
  \coordinate (b1) at (90:\db) {};
  \coordinate (b2) at (18:\db) {};
  \coordinate (b3) at (306:\db) {};
  \coordinate (b4) at (234:\db) {};
  \coordinate (b5) at (162:\db) {};
  \coordinate (c1) at (90+\ofs:\dc) {};
  \coordinate (c2) at (18-\ofs:\dc) {};
  \coordinate (c3) at (306+\ofs:\dc) {};
  \coordinate (c4) at (234-\ofs:\dc) {};
  \coordinate (c5) at (162+\ofs:\dc) {};
  \coordinate (d1) at (90-\ofs:\dc) {};
  \coordinate (d2) at (18+\ofs:\dc) {};
  \coordinate (d3) at (306-\ofs:\dc) {};
  \coordinate (d4) at (234+\ofs:\dc) {};
  \coordinate (d5) at (162-\ofs:\dc) {};
  \draw \foreach \x in {1,...,5} \foreach \y in {a,c,d} {
  		(b\x) -- (\y\x) };
  \draw (a1) -- (a2) -- (a3) -- (a4) -- (a5) -- (a1);
  \draw (c1) -- (c2) -- (c3) -- (c4) -- (c5) -- (d1) -- (d2) -- (d3) -- (d4) -- (d5) -- (c1);
  \draw \foreach \x in {1,...,5} \foreach \y in {a,b,c,d} {
		(\y\x) node[circle, draw, fill=white, inner sep=0pt, minimum width=4pt] {} };
  \draw (0,-3) node[] {};
  \draw{
  		(90:\da-0.35) node[] {$a_1$}
  		(18:\da-0.35) node[] {$a_2$}
  		(306:\da-0.35) node[] {$a_3$}
  		(234:\da-0.35) node[] {$a_4$}
  		(162:\da-0.35) node[] {$a_5$}
  		(b1) node[above] {$b_1$}
  		(b2) node[right] {$b_2$}
  		(b3) node[below] {$b_3$}
  		(b4) node[below] {$b_4$}
  		(b5) node[left] {$b_5$}
  		(c1) node[above] {$c_1$}
  		(c2) node[right] {$c_2$}
  		(c3) node[right] {$c_3$}
  		(c4) node[left] {$c_4$}
  		(c5) node[left] {$c_5$}
  		(d1) node[above] {$d_1$}
  		(d2) node[above] {$d_2$}
  		(d3) node[below] {$d_3$}
  		(d4) node[below] {$d_4$}
  		(d5) node[above] {$d_5$}
  };
 \end{tikzpicture}
 \caption{The graph $J_5$.}\label{fig:J5other}
\end{subfigure}
\hfill
\begin{subfigure}[b]{0.40\linewidth}
 \centering
 \begin{tikzpicture}[scale=0.65]
    \def\xd{1.8};
    \def\yd{1};
    \def\cd{.5}
 	\coordinate (a1) at (-1.8,0);
 	\coordinate (a2) at ($(a1)+(\xd,0)$);
 	\coordinate (a3) at ($(a2)+(\xd,0)$);
 	\coordinate (b1) at ($(a1)+(0,\yd)$);
 	\coordinate (b2) at ($(a2)+(0,\yd)$);
 	\coordinate (b3) at ($(a3)+(0,\yd)$);
 	\coordinate (c1) at ($(b1)+(-\cd,\yd)$);
 	\coordinate (c2) at ($(b2)+(-\cd,\yd)$);
 	\coordinate (c3) at ($(b3)+(-\cd,\yd)$);
 	\coordinate (d1) at ($(b1)+(\cd,\yd)$);
 	\coordinate (d2) at ($(b2)+(\cd,\yd)$);
 	\coordinate (d3) at ($(b3)+(\cd,\yd)$);
    \draw \foreach \x in {1,...,3} \foreach \y in {a,c,d} {
    	(b\x) -- (\y\x) };
    \draw (-1.8-\cd,0) -- (\xd + \cd,0);
    \draw (-1.8-2*\cd,\cd+2*\yd) to[out=330,in=130,looseness=1.5] (c1) 
    	to[out=50,in=130,looseness=1.5] (c2) to[out=50,in=130,looseness=1.5] (c3) 
    	to[out=50,in=140,looseness=1.4] (\xd + 2*\cd,.5*\cd+2*\yd);
    \draw (-1.8-2*\cd,.5*\cd+2*\yd) to[out=40,in=130,looseness=1.4] (d1) 
    	to[out=50,in=130,looseness=1.5] (d2) to[out=50,in=130,looseness=1.5] (d3) 
    	to[out=50,in=210,looseness=1.5] (\xd + 2*\cd,\cd+2*\yd);
    \draw \foreach \x in {1,...,3} \foreach \y in {a,b,c,d} {
		(\y\x) node[circle, draw, fill=white, inner sep=0pt, minimum width=5pt] {} };
	\draw \foreach \x in {(b1),(b3)} {
		\x node[circle, draw, fill=black, inner sep=0pt, minimum width=5pt] {} };
	\draw \foreach \x in {(a2),(c2),(d2)} {
		\x node[circle, draw, fill=gray!60, inner sep=0pt, minimum width=5pt] {} };
    \draw {
    	(0,-.4) node[] {$a_i$}
    	(b1) node[right] {$\, b_{i-1}$}
    	(b2) node[right] {$\, b_{i}$}
    	(b3) node[right] {$\, b_{i+1}$}
    	(c2) node[below] {$c_i \quad $}
    	(d2) node[below] {$\quad d_i$}
    	(-1.8-\cd,0) node[left] {$\cdots$}
    	(\xd + \cd,0) node[right] {$\cdots$}
    	(-1.8-2*\cd,.75*\cd+2*\yd) node[left] {$\cdots$}
    	(\xd + 2*\cd,.75*\cd+2*\yd) node[right] {$\cdots$}
    };
 \end{tikzpicture}
 \caption{A portion of $J_n$.}\label{fig:sets}
\end{subfigure}
\caption{ }
\end{figure}

\subsection{The $\{3\}$-Metric Dimension of $J_n$}

We begin by proving two technical lemmas. In these lemmas, we consider certain sets of vertices with at most three elements. Any $\{3\}$-resolving set should be able to distinguish these sets from each other. However, as we will see, there are very few vertices able to do that. In Figure \ref{fig:sets}, we have illustrated a part of a flower snark, which will help in visualising the sets of vertices discussed in the lemmas. Notice that if $i=1$, then $b_{i-1} = b_n$, and if $i=n$, then $b_{i+1} = b_1$.

\begin{lem}\label{lem:borthree}
Let $i \in \{1,\ldots, n\}$ and 
\begin{align*}
B = \{ b_{i-1} , b_{i+1} \}, && X = B \cup \{ a_i \}, && Y = B \cup \{ c_i \} , && Z = B \cup \{ d_i \}.
\end{align*}
We have
\begin{enumerate}[topsep=3pt,itemsep=1pt]
\item[(i)] $d(s,X) \neq d(s,B)$ if and only if $s \in \{ a_i,b_i \}$,
\item[(ii)] $d(s,Y) \neq d(s,B)$ if and only if $s \in \{ c_i,b_i \}$,
\item[(iii)] $d(s,Z) \neq d(s,B)$ if and only if $s \in \{ d_i,b_i \}$.
\end{enumerate}
\end{lem}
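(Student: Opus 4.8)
The plan is to prove all three equivalences by a direct computation of the relevant distances in $J_n$, exploiting the structural symmetry noted in the text: since $c_i$ and $d_i$ play symmetric roles, parts (ii) and (iii) are essentially identical, and part (i) is the genuinely new computation. For each part the set $B = \{b_{i-1}, b_{i+1}\}$ is fixed, and the set $X$ (resp.\ $Y$, $Z$) adds a single vertex $a_i$ (resp.\ $c_i$, $d_i$) of the star $T_i$. Because $d(s,X) = \min\{d(s,B), d(s,a_i)\}$, the condition $d(s,X) \neq d(s,B)$ holds exactly when $d(s,a_i) < d(s,B)$, i.e.\ when $s$ is strictly closer to the newly added vertex than to either $b_{i-1}$ or $b_{i+1}$. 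So the whole lemma reduces to determining, for each added vertex $w \in \{a_i, c_i, d_i\}$, precisely which $s \in V(J_n)$ satisfy $d(s,w) < \min\{d(s,b_{i-1}), d(s,b_{i+1})\}$.

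First I would record the local distances around $T_i$ that make the claimed answer plausible. Each star has center $b_i$ adjacent to the three leaves $a_i, c_i, d_i$, with $b_i$ also adjacent to $b_{i-1}$ and $b_{i+1}$ being false — in fact $b_i$ is adjacent only to $a_i, c_i, d_i$, while $b_{i-1}$ and $b_{i+1}$ connect to $b_i$ through the outer cycles via $a_{i-1}a_i$, $c_{i-1}c_i$, $d_{i-1}d_i$ and so on. The key small-distance facts I would verify are: $d(b_i, a_i) = d(b_i, c_i) = d(b_i, d_i) = 1$, whereas $d(b_i, b_{i-1}) = d(b_i, b_{i+1}) = 2$ (going $b_i a_i a_{i-1} b_{i-1}$, etc.); and for the added leaf $w = a_i$, $d(a_i, b_{i-1}) = d(a_i, b_{i+1}) = 2$ while $d(a_i, a_i) = 0$, so both $a_i$ and $b_i$ are strictly closer to $a_i$ than to $B$. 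This confirms the ``if'' direction of (i), and analogous one-step computations (using the $c$- and $d$-cycles) confirm it for (ii) and (iii).

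The heart of the proof — and the main obstacle — is the ``only if'' direction: showing that \emph{no other} vertex $s$ has $d(s,w) < d(s,B)$. The plan here is to argue that for any $s \notin \{w, b_i\}$, a shortest path from $s$ to $w$ must pass through one of the ``gateway'' vertices adjacent to $B$, so that $d(s,w) \geq d(s,B)$. Concretely, every shortest path from an external vertex $s$ into the star $T_i$ enters through $a_i$ (via the $a$-cycle), through $c_i$ (via the $c$-cycle), or through $d_i$ (via the $d$-cycle). For the target $w = a_i$ in part (i), one shows that any such $s \neq a_i, b_i$ reaches $a_i$ by a path that is at least as long as a path reaching $b_{i-1}$ or $b_{i+1}$: intuitively, to get to $a_i$ one travels along the $a$-cycle and passes a vertex $a_{i\pm1}$ that is itself adjacent to $b_{i\pm1}$, giving $d(s, b_{i\pm 1}) \leq d(s, a_i)$. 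The delicate cases are the vertices $a_{i-1}, a_{i+1}$ and the vertices of the neighboring stars $T_{i-1}, T_{i+1}$, where the competing distances differ by only $1$ and one must check the inequality is not strict in the wrong direction; the peculiar shortest paths involving $c_1$ and $c_{k+2}$ flagged in the text should not interfere here since $B$ lies on the $b$-vertices, but I would double-check that no antipodal shortcut lets a distant $s$ reach $a_i$ faster than it reaches $B$.

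Finally I would organize the write-up to avoid repetition: prove (i) in full, then observe that the map fixing all $a$- and $b$-vertices while swapping the $c$-cycle with the $d$-cycle is an automorphism of $J_n$ (as noted after Figure~\ref{fig:J5other}, $c_i$ and $d_i$ have no essential difference), so (iii) follows from (ii) by symmetry. For (ii), I would repeat the ``only if'' argument with $w = c_i$, the only structural change being that entry into $T_i$ relevant to $c_i$ is through the $c$-cycle, and the gateway vertices near $B$ are again $a_{i\pm1}$ (adjacent to $b_{i\pm1}$) together with $c_{i\pm1}$; the computation that $b_i$ and $c_i$ are the unique vertices strictly closer to $c_i$ than to $B$ mirrors part (i) step for step. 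The expected difficulty is entirely in controlling these boundary distances cleanly, so I would set up a single distance table for vertices within graph-distance $3$ of $T_i$ and invoke the $3$-regular, girth-$\geq 5$ structure to dispatch all farther vertices at once.
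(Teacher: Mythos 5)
Your proposal follows essentially the same route as the paper's proof: the decisive step---that any shortest path from a vertex $s \notin T_i$ to any $u \in T_i$ must enter $T_i$ through a leaf of $T_{i-1}$ or $T_{i+1}$, which is adjacent to $b_{i-1}$ or $b_{i+1}$, so that $d(s,u) \geq d(s,B)$ and hence no external vertex can distinguish---is exactly the paper's argument, and the vertices of $T_i$ itself are then settled by a small distance table, as in the paper. One correction to your numbers: $d(b_i,b_{i-1}) = d(b_i,b_{i+1}) = 3$, not $2$ (your own witness path $b_i a_i a_{i-1} b_{i-1}$ has three edges); this slip is harmless, since $d(b_i,a_i) = 1$ is still strictly smaller than $d(b_i,B)$, so all of your conclusions stand.
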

\begin{proof}
Let $v \in V \setminus T_i$ and $u \in T_i$. Any shortest path $v-u$ goes through either $T_{i-1}$ or $T_{i+1}$. Thus, either $d(v,u) \geq d(v,b_{i-1})$ or $d(v,u) \geq d(v,b_{i+1})$, and we have $d(v, X) = d(v, Y) = d(v, Z) = \min \{d(v,b_{i-1}), d(v,b_{i+1}) \} = d(v, B)$.

Consider then the elements of $T_i$. The distances from each element $s \in T_i$ to each of the sets $B$, $X$, $Y$ and $Z$ are presented in the following table:
\begin{align*}
\begin{array}{c||c|c|c|c}
s & d(s,B) & d(s,X) & d(s,Y) & d(s,Z)  \\ \hline \hline
b_i & 3 & 1 & 1 & 1 \\ \hline
a_i & 2 & 0 & 2 & 2 \\ \hline
c_i & 2 & 2 & 0 & 2 \\ \hline
d_i & 2 & 2 & 2 & 0 \\ \hline
\end{array}
\end{align*}
\end{proof}

\begin{lem}\label{lem:3vs3}
Let $i \in \{1,\ldots, n\}$ and 
\begin{align*}
 X = \{ a_i, b_{i-1}, b_{i+1} \}, && Y = \{ c_i, b_{i-1}, b_{i+1} \} , && Z = \{ d_i, b_{i-1}, b_{i+1} \}.
\end{align*}
We have
\begin{enumerate}[topsep=3pt,itemsep=1pt]
\item[(i)] $d(s,X) \neq d(s,Y)$ if and only if $s \in \{ a_i,c_i \}$,
\item[(ii)] $d(s,X) \neq d(s,Z)$ if and only if $s \in \{ a_i,d_i \}$,
\item[(iii)] $d(s,Y) \neq d(s,Z)$ if and only if $s \in \{ c_i,d_i \}$.
\end{enumerate}
\end{lem}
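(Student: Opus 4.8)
The plan is to observe that the three sets $X$, $Y$, $Z$ appearing here are \emph{exactly} the same sets as in Lemma~\ref{lem:borthree} (there written as $B = \{b_{i-1},b_{i+1}\}$, $X = B \cup \{a_i\}$, $Y = B \cup \{c_i\}$, $Z = B \cup \{d_i\}$), so this lemma should follow as a bookkeeping corollary of the distance computations already carried out there. In particular, for any $s \in V$ we have $d(s,X) = \min\{d(s,B),d(s,a_i)\}$, and analogously for $Y$ and $Z$, so comparing $d(s,X)$ with $d(s,Y)$ and the other pairs only requires the values recorded in the proof of Lemma~\ref{lem:borthree}.

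First I would dispose of all vertices $s \notin T_i$. The proof of Lemma~\ref{lem:borthree} already establishes that for such $s$ one has $d(s,X) = d(s,Y) = d(s,Z) = d(s,B)$, the point being that every shortest path from $s$ into $T_i$ passes through $T_{i-1}$ or $T_{i+1}$ and hence is at least as long as the distance to $b_{i-1}$ or $b_{i+1}$. Consequently all three distances coincide for external $s$, which simultaneously rules out every $s \notin T_i$ as a distinguishing vertex and thereby settles the ``only if'' direction of (i), (ii) and (iii) for all vertices outside $T_i$ at once.

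It then remains to check the four vertices of $T_i$, and for this I would simply read off the table in the proof of Lemma~\ref{lem:borthree}, whose rows give $d(s,X)$, $d(s,Y)$, $d(s,Z)$ for $s \in \{b_i,a_i,c_i,d_i\}$. From that table $d(b_i,X)=d(b_i,Y)=d(b_i,Z)=1$, so $b_i$ distinguishes no pair; the row for $a_i$ reads $(0,2,2)$, so $a_i$ separates $X$ from $Y$ and $X$ from $Z$ but not $Y$ from $Z$; the row for $c_i$ reads $(2,0,2)$; and the row for $d_i$ reads $(2,2,0)$. Matching these against the three claims yields precisely $\{a_i,c_i\}$ for (i), $\{a_i,d_i\}$ for (ii) and $\{c_i,d_i\}$ for (iii).

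Since essentially all of the geometric work has already been done in Lemma~\ref{lem:borthree}, I do not expect a genuine obstacle; the statement is a direct consequence of that lemma. The only point requiring a little care is to invoke the external-vertex case explicitly so that the ``only if'' direction is covered cleanly and no vertex outside $T_i$ is mistakenly counted, after which the three equivalences are immediate from the four rows of the table.
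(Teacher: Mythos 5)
Your proposal is correct and is exactly the paper's approach: the paper's entire proof of this lemma reads ``Follows from the proof of Lemma \ref{lem:borthree}.'' You have simply made explicit the two ingredients that proof relies on --- the external-vertex argument showing $d(s,X)=d(s,Y)=d(s,Z)=d(s,B)$ for $s \notin T_i$, and the distance table for the four vertices of $T_i$ --- and your readings of the table rows are all accurate.
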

\begin{proof}
Follows from the proof of Lemma \ref{lem:borthree}.
\end{proof}

In the following theorem, the exact values of $\beta_3 (J_n)$ are determined for all $n \geq 5$.

\begin{thm}\label{thm:FS3}
Let $n \geq 5$ be an odd integer. We have $\beta_3 (J_n) = 3n$.
\end{thm}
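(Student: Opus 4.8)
The plan is to prove the matching bounds $\beta_3(J_n) \geq 3n$ and $\beta_3(J_n) \leq 3n$. For the lower bound I will show that every $\{3\}$-resolving set $S$ contains at least three vertices of each star $T_i = \{a_i, b_i, c_i, d_i\}$; since the stars partition $V(J_n)$, summing then gives $|S| \geq 3n$. For the upper bound I will exhibit the set $S_0 = V \setminus \{b_1, \ldots, b_n\}$, which has exactly $3n$ vertices, and verify that it is $\{3\}$-resolving.

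For the lower bound, fix $i$ and let $S$ be any $\{3\}$-resolving set. Applying $S$ to the three pairwise-distinct cardinality-$3$ sets of Lemma \ref{lem:3vs3} forces $S \cap \{a_i, c_i\}$, $S \cap \{a_i, d_i\}$ and $S \cap \{c_i, d_i\}$ all to be nonempty, which (a triangle needs a vertex cover of size two) is possible only if $|S \cap \{a_i, c_i, d_i\}| \geq 2$. Next, Lemma \ref{lem:borthree} applied to the pairs $(X,B)$, $(Y,B)$, $(Z,B)$ forces $S$ to meet $\{a_i, b_i\}$, $\{c_i, b_i\}$ and $\{d_i, b_i\}$. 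I then split on whether $b_i \in S$: if $b_i \in S$, the conclusion of the previous sentence already supplies two further vertices from $\{a_i,c_i,d_i\}$, so $|S \cap T_i| \geq 3$; if $b_i \notin S$, the three conditions from Lemma \ref{lem:borthree} force $a_i, c_i, d_i \in S$, again giving $|S \cap T_i| \geq 3$. In both cases $|S \cap T_i| \geq 3$, and summing over $i$ yields $\beta_3(J_n) \geq 3n$.

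For the upper bound I will invoke the observation following Theorem \ref{thm:solidrs}: it suffices to show that $S_0$ is $2$-solid-resolving and, additionally, that distinct vertex sets of cardinality exactly $3$ receive distinct distance arrays. The engine for both tasks is a single neighbourhood claim that exploits girth at least $5$: if $W \subseteq V \setminus \{b_p\}$ with $|W| \leq 2$, then $\{a_p, c_p, d_p\} \not\subseteq N[W]$, because any vertex distinct from $b_p$ is equal or adjacent to at most one of $a_p, c_p, d_p$ (being adjacent to two would close a cycle of length at most $4$ through $b_p$). Using Theorem \ref{thm:superb}, the $2$-solid-resolving property is immediate for $x \in S_0$ (take $s = x$), and for $x = b_p$ it follows by choosing $s \in \{a_p, c_p, d_p\} \setminus N[Y]$, which exists by the claim and satisfies $d(s, b_p) = 1 < d(s, Y)$.

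Finally, for two distinct cardinality-$3$ sets $X, Y$ with $\D_{S_0}(X) = \D_{S_0}(Y)$, the zero entries of the array reveal $X \cap S_0 = Y \cap S_0 =: A$, so $X$ and $Y$ can differ only among the $b$-vertices; pick $b_p \in X \setminus Y$. The only part of $Y$ that can reach $\{a_p, c_p, d_p\}$ is $A$, since each other $b$-vertex $b_r$ with $r \neq p$ has $N[b_r]$ disjoint from $\{a_p, c_p, d_p\}$, and $|A| \leq 2$ because $Y$, like $X$, contains at least one $b$-vertex. The claim then provides $s \in \{a_p, c_p, d_p\} \setminus N[A]$ with $d(s, X) = 1 < d(s, Y)$, a contradiction. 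I expect the main obstacle to be organising this last case cleanly---in particular, recognising that after reading off the zeros the only remaining ambiguity concerns the $b$-vertices, and that every $b_r$ with $r \neq p$ is irrelevant to the three neighbours of $b_p$---after which the whole argument collapses onto the single girth-based neighbourhood claim.
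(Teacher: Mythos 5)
Your proof is correct. The lower bound is essentially the paper's argument: Lemmas \ref{lem:borthree} and \ref{lem:3vs3} together force every pair of vertices of $T_i$ to contain an element of $S$, hence $|S\cap T_i|\ge 3$; your case split on whether $b_i\in S$ is just a restructuring of that same counting. For the upper bound you choose the same set $S_0=V\setminus\{b_1,\dots,b_n\}$ as the paper but verify it by a genuinely different route. The paper gives a direct decoding argument: from $\D_{S_0}(X)$ it reconstructs $X$ by reading off the zeros and then deciding, for each $i$, whether $b_i\in X$ according to whether every $s\in T_i\cap S_0$ satisfies $d(s,X)\le 1$, the key point being that $N(s)\cap N(s')=\{b_i\}$ for distinct $s,s'\in T_i\cap S_0$. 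You instead run the paper's general machinery: you check condition \eqref{eq:superb} of Theorem \ref{thm:superb} for $\ell=2$ to conclude that $S_0$ is $2$-solid-resolving, then invoke the observation following Theorem \ref{thm:solidrs} to reduce the $\{3\}$-resolving property to distinguishing pairs of cardinality-$3$ sets, and both steps collapse onto your single girth-based claim that any vertex other than $b_p$ lies in the closed neighbourhood of at most one of $a_p,c_p,d_p$. The two verifications rest on the same structural fact about flower snarks (the paper's disjoint-neighbourhoods observation is your claim in disguise), so the difference is organisational rather than substantive: your route is more modular, reuses the earlier theory, and isolates the one geometric ingredient cleanly, while the paper's decoding is self-contained, handles all cardinalities $|X|\le 3$ uniformly, and is constructive in the sense that it shows how to actually recover $X$ from its distance array.
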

\begin{proof}
$\beta_3 (J_n) \geq 3n$: Let $S$ be a $\{3\}$-resolving set of $J_n$. Any set of two vertices of $T_i$ contains an element of $S$ by Lemmas \ref{lem:borthree} and \ref{lem:3vs3}. Thus, $|S \cap T_i| \geq 3$ for all $i = 1, \ldots, n$, and the lower bound $\beta_3 (J_n) \geq 3n$ follows.

$\beta_3 (J_n) \leq 3n$: Let $S = V \setminus \{b_i \ | \ i = 1,\ldots, n \}$ (see Figure \ref{fig:FS3}) and let $X \subseteq V$ such that $|X| \leq 3$. We will prove that $S$ is a $\{3\}$-resolving set of $J_n$ by showing how to determine the elements of $X$ when we know the distance array $\D_S (X)$.

If for some $s \in S$ we have $d(s,X) = 0$, then clearly $s \in X$. Thus, if $\D_S (X)$ has three zeros, we have found all elements of $X$ since $|X| \leq 3$.

Assume that $\D_S (X)$ has at most two zeros. We need to determine whether $b_i \in X$ for any $i \in \{1,\ldots,n\}$. Consider any $T_i$. If $d(s,X) \geq 2$ for some $s \in T_i \cap S$, then clearly $b_i \notin X$. If $d(s,X) \leq 1$ for all $s \in T_i \cap S$, then $b_i \in X$. Indeed, assume to the contrary that $b_i \notin X$. There is an element of $X$ in $N(s) \setminus \{b_i\}$ for every $s \in T_i \cap S$ such that $d(s,X) =1$. However, all neighbours of $s$ other than $b_i$ are also in $S$. Since $N(s) \cap N(s') = \{ b_i \}$ for all distinct $s,s' \in T_i \cap S$, there must be at least three zeros in the distance array $\D_S (X)$, a contradiction. Therefore, when $\D_S (X)$ has at most two zeros $b_i \in X$ if and only if $d(s,X) \leq 1$ for all $s \in T_i \cap S$.
\end{proof}

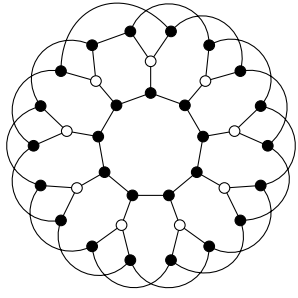
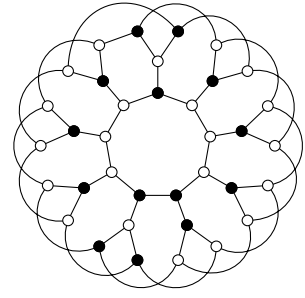
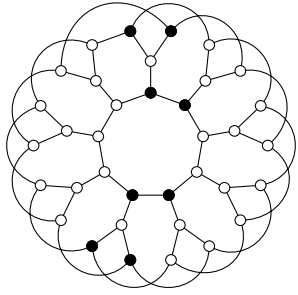
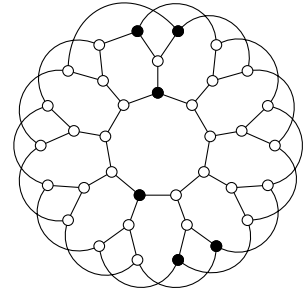
\begin{figure}
\centering
\begin{subfigure}[b]{0.45\linewidth} 
 \centering
 \begin{tikzpicture}[scale=0.7]
  \def\da{1};
  \def\db{\da+.6};
  \def\dc{\db+.6};
  \def\dg{40};
  \def\dgf{90+\dg};
  \def\ofs{10};
  \def\loos{1.5};
  \foreach \x in {1,...,9} \coordinate (a\x) at (\dgf-\x*\dg:\da);
  \foreach \x in {1,...,9} \coordinate (b\x) at (\dgf-\x*\dg:\db);
  \foreach \x in {1,...,9} \coordinate (c\x) at (\dgf-\x*\dg+\ofs:\dc);
  \foreach \x in {1,...,9} \coordinate (d\x) at (\dgf-\x*\dg-\ofs:\dc);
  \draw \foreach \x in {1,...,9} \foreach \y in {a,c,d} {
  		(b\x) -- (\y\x) };
  \draw (a1) -- (a2) -- (a3) -- (a4) -- (a5) -- (a6) -- (a7) -- (a8) -- (a9) -- (a1);
  \draw {
  		(c1) to[out=70,in=90,looseness=\loos] (c2) to[out=30,in=50,looseness=\loos] (c3)
  		to[out=350,in=10,looseness=\loos] (c4) to[out=310,in=330,looseness=\loos] (c5)
  		to[out=270,in=290,looseness=\loos] (c6) to[out=230,in=250,looseness=\loos] (c7)
  		to[out=190,in=210,looseness=\loos] (c8) to[out=150,in=170,looseness=\loos] (c9)
  		to[out=90,in=130,looseness=\loos]
  		(d1) to[out=50,in=75,looseness=\loos] (d2) to[out=10,in=40,looseness=\loos] (d3)
  		to[out=330,in=0,looseness=\loos] (d4) to[out=290,in=320,looseness=\loos] (d5)
  		to[out=250,in=275,looseness=\loos] (d6) to[out=205,in=230,looseness=\loos] (d7)
  		to[out=165,in=195,looseness=\loos] (d8) to[out=125,in=155,looseness=\loos] (d9) -- (c1)
  };
  \draw \foreach \x in {1,...,9} \foreach \y in {b} {
		(\y\x) node[circle, draw, fill=white, inner sep=0pt, minimum width=4pt] {} };
  \draw \foreach \x in {1,...,9} \foreach \y in {a,c,d} {
		(\y\x) node[circle, draw, fill=black, inner sep=0pt, minimum width=4pt] {} };
 \end{tikzpicture}
 \caption{A $\{3\}$-resolving set.}\label{fig:FS3}
\end{subfigure}
\hfill
\begin{subfigure}[b]{0.45\linewidth} 
 \centering
 \begin{tikzpicture}[scale=0.7]
  \def\da{1};
  \def\db{\da+.6};
  \def\dc{\db+.6};
  \def\dg{40};
  \def\dgf{90+\dg};
  \def\ofs{10};
  \def\loos{1.5};
  \foreach \x in {1,...,9} \coordinate (a\x) at (\dgf-\x*\dg:\da);
  \foreach \x in {1,...,9} \coordinate (b\x) at (\dgf-\x*\dg:\db);
  \foreach \x in {1,...,9} \coordinate (c\x) at (\dgf-\x*\dg+\ofs:\dc);
  \foreach \x in {1,...,9} \coordinate (d\x) at (\dgf-\x*\dg-\ofs:\dc);
  \draw \foreach \x in {1,...,9} \foreach \y in {a,c,d} {
  		(b\x) -- (\y\x) };
  \draw (a1) -- (a2) -- (a3) -- (a4) -- (a5) -- (a6) -- (a7) -- (a8) -- (a9) -- (a1);
  \draw {
  		(c1) to[out=70,in=90,looseness=\loos] (c2) to[out=30,in=50,looseness=\loos] (c3)
  		to[out=350,in=10,looseness=\loos] (c4) to[out=310,in=330,looseness=\loos] (c5)
  		to[out=270,in=290,looseness=\loos] (c6) to[out=230,in=250,looseness=\loos] (c7)
  		to[out=190,in=210,looseness=\loos] (c8) to[out=150,in=170,looseness=\loos] (c9)
  		to[out=90,in=130,looseness=\loos]
  		(d1) to[out=50,in=75,looseness=\loos] (d2) to[out=10,in=40,looseness=\loos] (d3)
  		to[out=330,in=0,looseness=\loos] (d4) to[out=290,in=320,looseness=\loos] (d5)
  		to[out=250,in=275,looseness=\loos] (d6) to[out=205,in=230,looseness=\loos] (d7)
  		to[out=165,in=195,looseness=\loos] (d8) to[out=125,in=155,looseness=\loos] (d9) -- (c1)
  };
  \draw \foreach \x in {1,...,9} \foreach \y in {a,b,c,d} {
		(\y\x) node[circle, draw, fill=white, inner sep=0pt, minimum width=4pt] {} };
  \draw \foreach \x in {(a1),(c1),(d1),(b2),(b3),(b4),(b5),(a5),(a6),(c6),(d6),(b7),(b8),(b9)} {
		\x node[circle, draw, fill=black, inner sep=0pt, minimum width=4pt] {} };
 \end{tikzpicture}
 \caption{A 2-solid-resolving set.}\label{fig:FS2s}
\end{subfigure}
\begin{subfigure}[b]{0.45\linewidth} 
 \centering
 \begin{tikzpicture}[scale=0.7]
  \def\da{1};
  \def\db{\da+.6};
  \def\dc{\db+.6};
  \def\dg{40};
  \def\dgf{90+\dg};
  \def\ofs{10};
  \def\loos{1.5};
  \foreach \x in {1,...,9} \coordinate (a\x) at (\dgf-\x*\dg:\da);
  \foreach \x in {1,...,9} \coordinate (b\x) at (\dgf-\x*\dg:\db);
  \foreach \x in {1,...,9} \coordinate (c\x) at (\dgf-\x*\dg+\ofs:\dc);
  \foreach \x in {1,...,9} \coordinate (d\x) at (\dgf-\x*\dg-\ofs:\dc);
  \draw \foreach \x in {1,...,9} \foreach \y in {a,c,d} {
  		(b\x) -- (\y\x) };
  \draw (a1) -- (a2) -- (a3) -- (a4) -- (a5) -- (a6) -- (a7) -- (a8) -- (a9) -- (a1);
  \draw {
  		(c1) to[out=70,in=90,looseness=\loos] (c2) to[out=30,in=50,looseness=\loos] (c3)
  		to[out=350,in=10,looseness=\loos] (c4) to[out=310,in=330,looseness=\loos] (c5)
  		to[out=270,in=290,looseness=\loos] (c6) to[out=230,in=250,looseness=\loos] (c7)
  		to[out=190,in=210,looseness=\loos] (c8) to[out=150,in=170,looseness=\loos] (c9)
  		to[out=90,in=130,looseness=\loos]
  		(d1) to[out=50,in=75,looseness=\loos] (d2) to[out=10,in=40,looseness=\loos] (d3)
  		to[out=330,in=0,looseness=\loos] (d4) to[out=290,in=320,looseness=\loos] (d5)
  		to[out=250,in=275,looseness=\loos] (d6) to[out=205,in=230,looseness=\loos] (d7)
  		to[out=165,in=195,looseness=\loos] (d8) to[out=125,in=155,looseness=\loos] (d9) -- (c1)
  };
  \draw \foreach \x in {1,...,9} \foreach \y in {a,b,c,d} {
		(\y\x) node[circle, draw, fill=white, inner sep=0pt, minimum width=4pt] {} };
  \draw \foreach \x in {a1,a5,a6,a2,c1,c6,d1,d6} {
		(\x) node[circle, draw, fill=black, inner sep=0pt, minimum width=4pt] {} };
 \end{tikzpicture}
 \caption{A $\{2\}$-resolving set.}\label{fig:FS2}
\end{subfigure}
\hfill
\begin{subfigure}[b]{0.45\linewidth} 
 \centering
 \begin{tikzpicture}[scale=0.7]
  \def\da{1};
  \def\db{\da+.6};
  \def\dc{\db+.6};
  \def\dg{40};
  \def\dgf{90+\dg};
  \def\ofs{10};
  \def\loos{1.5};
  \foreach \x in {1,...,9} \coordinate (a\x) at (\dgf-\x*\dg:\da);
  \foreach \x in {1,...,9} \coordinate (b\x) at (\dgf-\x*\dg:\db);
  \foreach \x in {1,...,9} \coordinate (c\x) at (\dgf-\x*\dg+\ofs:\dc);
  \foreach \x in {1,...,9} \coordinate (d\x) at (\dgf-\x*\dg-\ofs:\dc);
  \draw \foreach \x in {1,...,9} \foreach \y in {a,c,d} {
  		(b\x) -- (\y\x) };
  \draw (a1) -- (a2) -- (a3) -- (a4) -- (a5) -- (a6) -- (a7) -- (a8) -- (a9) -- (a1);
  \draw {
  		(c1) to[out=70,in=90,looseness=\loos] (c2) to[out=30,in=50,looseness=\loos] (c3)
  		to[out=350,in=10,looseness=\loos] (c4) to[out=310,in=330,looseness=\loos] (c5)
  		to[out=270,in=290,looseness=\loos] (c6) to[out=230,in=250,looseness=\loos] (c7)
  		to[out=190,in=210,looseness=\loos] (c8) to[out=150,in=170,looseness=\loos] (c9)
  		to[out=90,in=130,looseness=\loos]
  		(d1) to[out=50,in=75,looseness=\loos] (d2) to[out=10,in=40,looseness=\loos] (d3)
  		to[out=330,in=0,looseness=\loos] (d4) to[out=290,in=320,looseness=\loos] (d5)
  		to[out=250,in=275,looseness=\loos] (d6) to[out=205,in=230,looseness=\loos] (d7)
  		to[out=165,in=195,looseness=\loos] (d8) to[out=125,in=155,looseness=\loos] (d9) -- (c1)
  };
  \draw \foreach \x in {1,...,9} \foreach \y in {a,b,c,d} {
		(\y\x) node[circle, draw, fill=white, inner sep=0pt, minimum width=4pt] {} };
  \draw \foreach \x in {(a1),(c1),(d1),(c5),(d5),(a6)} {
		\x node[circle, draw, fill=black, inner sep=0pt, minimum width=4pt] {} };
 \end{tikzpicture}
 \caption{A 1-solid-resolving set.}\label{fig:FS1s}
\end{subfigure}
\caption{Optimal $\{\ell\}$- and $\ell$-solid-resolving sets of $J_9$.}
\end{figure}

\subsection{The 2-Solid-Metric Dimension of $J_n$}

Let $S$ be a 2-solid-resolving set of $J_n$. For any distinct sets $X,Y \subseteq V$ such that $|X| = 2$ and $|Y| \geq 3$, we have $\D_S (X) \neq \D_S (Y)$. In particular, Lemma \ref{lem:borthree} holds for $S$. Thus, either $b_i \in S$ or $\{a_i,c_i,d_i\} \subseteq S$. This observation gives us the obvious lower bound $\beta_2^s (J_n) \geq n$. However, as we will show in Theorem \ref{thm:FS2solid}, the 2-solid-metric dimension of $J_n$ is $n+5$. In order to obtain the lower bound $\beta_2^s (J_n) \geq n+5$, we need the following two lemmas. These lemmas tell us, how many vertices $a_i$, $c_i$ and $d_i$ a 2-solid-resolving set must contain.

Recall that we denote $n=2k+1$, where $k$ is an integer.

\begin{lem}\label{lem:Aatleast3}
Denote $A = \{ a_i \ | \ i = 1, \ldots, n \}$. If a vertex set $S$ is a $2$-solid-resolving set of $J_n$, then there can be at most $k-1$ consecutive elements of $A$ that are not elements of $S$. Consequently, $S$ must contain at least three elements of $A$.
\end{lem}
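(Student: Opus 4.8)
The plan is to prove the contrapositive of the first assertion through Theorem~\ref{thm:superb}: assuming that $k$ consecutive vertices of $A$ are missing from $S$, I exhibit a single vertex $x$ and a set $Y$ with $x\notin Y$ and $|Y|\le 2$ for which \emph{no} $s\in S$ satisfies $d(s,x)<d(s,Y)$, so that $S$ cannot be $2$-solid-resolving. Using the rotational symmetry of $J_n$, I may assume the gap is $a_1,\dots,a_k$. By the observation preceding the lemma (which is forced by Lemma~\ref{lem:borthree}), every $b_i$ with $a_i\notin S$ must lie in $S$; in particular $b_1,\dots,b_k\in S$, and these forced centres are exactly what will make the verification delicate. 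I will take
\[
x=a_k, \qquad Y=\{c_k,\,a_{k+1}\}.
\]

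The computational backbone is the distance identity $d(s,a_i)=w(s)+d_C(\pi(s),i)$, where $\pi(s)$ is the index of $s$, $w(a_\ell)=0$, $w(b_\ell)=1$, $w(c_\ell)=w(d_\ell)=2$, and $d_C$ denotes distance along the $a$-cycle (an odd cycle on $n=2k+1$ vertices). I would first justify this identity by observing that a shortest path into the $a$-cycle is cheapest when it enters through the column of $s$, so detours through the $c,d$-cycle never help. The combinatorial heart is then a statement purely about $C_{2k+1}$: the set of indices $\ell$ that are \emph{strictly} closer to $k$ than to its neighbour $k+1$, i.e.\ with $d_C(\ell,k)<d_C(\ell,k+1)$, is an arc of exactly $k$ vertices, namely $\{1,\dots,k\}$ — precisely the gap. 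This is where the threshold $k-1$ versus $k$ comes from, and where the parity of $n$ is used.

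With these two facts the verification that no $s\in S$ is a witness splits according to the column $\ell=\pi(s)$. If $\ell\notin\{1,\dots,k\}$, then $\ell$ is not strictly closer to $k$ than to $k+1$, so $d_C(\ell,k)\ge d_C(\ell,k+1)$ and hence $d(s,a_k)\ge d(s,a_{k+1})\ge d(s,Y)$; the vertex $a_{k+1}\in Y$ already blocks $s$. If $\ell\in\{1,\dots,k\}$ (the gap, where the troublesome forced centres $b_\ell$ live), I instead use $c_k\in Y$: travelling along the consecutive $c$-arc $c_\ell,\dots,c_k$ gives $d(s,c_k)\le d(s,a_k)$ for every $s\in\{b_\ell,c_\ell,d_\ell\}$, so again $s$ does not witness; the vertices of the column $T_k$ itself fall under this case. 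I expect the main obstacle to be exactly this second case: confirming that the single vertex $c_k$ simultaneously neutralises all of the forced centres $b_1,\dots,b_k$, which needs careful bookkeeping in the $c,d$-cycle rather than any slick argument.

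Finally, the ``at least three'' conclusion follows by a short counting argument on the cycle: if $|A\cap S|\le 2$, then the at least $n-2=2k-1$ missing vertices form at most two arcs on $C_n$, so one arc has length at least $\lceil (2k-1)/2\rceil=k$, contradicting the first part.
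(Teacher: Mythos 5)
Your proposal is correct and follows essentially the same route as the paper: up to the graph's rotational symmetry, the paper uses the identical witness pair (it takes the gap $a_{k+2},\dots,a_n$, the vertex $x=a_n$ and the set $X=\{c_n,a_1\}$, i.e.\ the same-column $c$-vertex together with the $a$-neighbour just outside the gap) and likewise derives the contradiction from Theorem~\ref{thm:superb}, differing only in that it organises the verification by vertex type ($b_i$, $c_i$, $d_i$, $a_j$) rather than by which side of the cycle the column $\pi(s)$ lies on. The concluding pigeonhole argument for ``at least three elements'' is also the paper's (implicit) argument.
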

\begin{proof}
Assume to the contrary that there are $k$ or more consecutive elements of $A$ that are not in $S$. Without loss of generality, we can assume that $\{ a_i \ | \ i = k+2, \ldots, n \} \cap S = \emptyset$.
We will show that the set $S$ is not a 2-solid-resolving set as it does not satisfy \eqref{eq:superb}. To that end, let us consider the vertex $a_n$ and the set $X = \{ c_n,a_1 \}$. For all $b_i$ we have $d(b_i , a_n) = d(b_i, c_n)$, and thus $d(b_i, X) \leq d(b_i, a_n)$. For all $c_i$ we have $d(c_i, c_n) \leq d(a_i,a_n) + 2 = d(c_i,a_n)$. Similarly, we have $d(d_i,c_n) \leq d(d_i,a_n)$ for all $d_i$. Since $d(c_i,X) \leq d(c_i,c_n)$ and $d(d_i,X) \leq d(d_i,d_n)$, we have $d(c_i,X) \leq d(c_i,a_n)$ and $d(d_i,X) \leq d(d_i,a_n)$ for all $i \in \{1, \ldots, n\}$. Let $a_j \in A \cap S$. Since $1 \leq j \leq k+1$, we have $d(a_j,a_1) \leq d(a_j,a_n)$, and thus $d(a_j,X) \leq d(a_j,a_n)$. Consequently, the set $S$ does not satisfy \eqref{eq:superb} for $a_n$ and $X$, and is not a 2-solid-resolving set of $J_n$ according to Theorem \ref{thm:superb}. 

Consequently, $A$ contains at least three elements of $S$.
\end{proof}

We denote the cycle $c_1 c_2 \ldots c_n d_1 d_2 \ldots d_n c_1$ by $C$.

\begin{lem}\label{lem:Catleast4}
If a vertex set $S$ is a $2$-solid-resolving set of $J_n$, then there can be at most $k$ consecutive vertices of $C$ that are not in $S$. Consequently, $S$ must contain at least four elements of $C$.
\end{lem}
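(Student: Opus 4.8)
The plan is to prove the statement in two stages, concentrating on the main claim that no $k+1$ consecutive vertices of $C$ can be missing from $S$, and then deducing the count $\beta$-style by a short pigeonhole argument. For the main claim I would argue by contradiction, exactly in the spirit of Lemma~\ref{lem:Aatleast3}: assuming an arc of $k+1$ consecutive vertices of $C$ avoids $S$, I would exhibit a vertex $x$ and a set $X$ with $|X|=2$, $x\notin X$, and $d(s,x)\ge d(s,X)$ for every $s\in S$, so that $S$ violates \eqref{eq:superb} and therefore fails to be $2$-solid-resolving by Theorem~\ref{thm:superb}. To fix the position of the arc I would use the order-$2n$ rotational automorphism $\rho$ of $J_n$ sending $a_i\mapsto a_{i+1}$, $b_i\mapsto b_{i+1}$, $c_i\mapsto c_{i+1}$, $c_n\mapsto d_1$, $d_i\mapsto d_{i+1}$, $d_n\mapsto c_1$, which one checks preserves every edge and acts on $C$ as a genuine rotation of the $2n$-cycle by one position. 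Since the property of being $2$-solid-resolving is invariant under automorphisms, I may assume without loss of generality that the missing arc is $\{c_{k+1},c_{k+2},\dots,c_n\}$, which has exactly $n-k=k+1$ vertices and lies entirely among the $c_i$.

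With the arc fixed I would take $x=c_n$ and $X=\{a_n,d_1\}$, where $d_1$ is the neighbour of $c_n$ on $C$ lying outside the arc and $a_n$ is the leaf of $T_n$ parallel to $c_n$ (both adjacent to $b_n$). The intended division of labour mirrors Lemma~\ref{lem:Aatleast3}: the sibling $a_n$ should dominate $c_n$ with respect to the interior vertices via the parallel-leaf identity $d(b_i,c_n)=d(b_i,a_n)$ and the inequality $d(a_i,c_n)\ge d(a_i,a_n)$ (the vertex $c_n$ sits one layer further out than $a_n$), while $d_1$ should dominate $c_n$ with respect to the $c$-vertices of $S$, all of which have index at most $k$ because $c_{k+1},\dots,c_n\notin S$. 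For such $c_i$ one verifies $d(c_i,d_1)\le d(c_i,c_n)$, giving $d(s,X)\le d(s,c_n)$ for $s\in\{c_i\}\cap S$; combining the three types of $s$ then yields $d(s,c_n)\ge d(s,X)$ for every $s\in S$, the desired contradiction.

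The main obstacle is the $d$-vertices together with the distance shortcuts. Because each star provides a length-$2$ shortcut $c_i\,b_i\,d_i$ and $C$ carries the junction edges $c_nd_1$ and $d_nc_1$, the graph distance along $C$ is \emph{not} the cycle distance, and neither element of $X$ dominates $c_n$ with respect to all of $\{d_i\}$ on its own. For instance $d(d_n,d_1)=3$ exceeds $d(d_n,c_n)=2$, so $d_n$ must be handled by $a_n$ through $d(d_n,a_n)=2$, whereas the $d_i$ of small index are handled by $d_1$. Thus the delicate step is to establish $\min\{d(s,d_1),d(s,a_n)\}\le d(s,c_n)$ for every $d$-vertex (and to confirm the two identities above for the $a$- and $b$-vertices) for general $n$. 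I would carry this out by writing the relevant shortest paths explicitly, comparing the direct routes along $C$ with those that use the star shortcuts and the two junctions, and tracking how the minimum defining $d(s,X)$ switches from $d_1$ to $a_n$ as the index of $s$ grows toward $n$.

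Finally, the \emph{Consequently} is a short counting argument. Once we know that at most $k$ consecutive vertices of the $2n$-cycle $C$ can be absent from $S$, set $t=|S\cap C|$ and note that the $t$ gaps between consecutive elements of $S$ on $C$ each contain at most $k$ vertices, so $2n-t\le kt$, that is $2n\le (k+1)t$. With $2n=4k+2$ this gives $t\ge (4k+2)/(k+1)>3$ for $k\ge 2$, hence $t\ge 4$, so $S$ must contain at least four elements of $C$.
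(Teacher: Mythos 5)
Your proposal is correct and takes essentially the same approach as the paper: assume $k+1$ consecutive vertices of $C$ avoid $S$, place the arc without loss of generality among the $c_i$, and defeat \eqref{eq:superb} with a two-element witness anchored at an endpoint of the arc; the distance checks you sketch (including the delicate split of the $d$-vertices between the two witnesses) do go through. The only differences are minor: the paper takes $x=c_1$ with $X=\{d_1,d_n\}$ where you take the reflected, symmetric choice $x=c_n$ with $X=\{a_n,d_1\}$, and the paper leaves both the automorphism behind its ``without loss of generality'' and the closing pigeonhole count implicit, whereas you spell them out.
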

\begin{proof}
Assume to the contrary that there are at least $k+1$ consecutive vertices of $C$ that are not in $S$. Without loss of generality, assume that $\{c_i \ | \ i = 1, \ldots, k+1\} \cap S = \emptyset$. 

Consider the vertex $c_1$ and the set $X = \{ d_1, d_n \}$. We will show that $d(s,c_1) \geq d(s,X)$ for all $s \in S$. For all $a_i$, we have $d(a_i,c_1) = d(a_i, a_1) + 2 = d(a_i,d_1)$. Consequently, $d(a_i,X) \leq d(a_i,c_1)$ for all $i \in \{1,\ldots,n\}$. Similarly, we have $d(b_i,c_1) = d(a_i,a_1) + 1 = d(b_i,d_1)$ and $d(b_i,X) \leq d(b_i,c_1)$ for all $i \in \{ 1, \ldots, n\}$. Consider then a vertex $d_j$. If $1 \leq j \leq k$, then $d(d_j,c_1) = d(d_j,d_1) + 2$. If $k+1 \leq j \leq n$, then $d(d_j,c_1) = d(d_j,d_n) + 1$. Thus, $d(d_j,X) < d(d_j,c_1)$ for all $j \in \{ 1, \ldots, n\}$. Similarly, for all $c_j$ where $j \in \{ k+3, \ldots, n \}$ we have $d(c_j,c_1) = d(c_j,d_1) + 2$. Finally, since $d(c_{k+2}, c_1) = k+1 = d(c_{k+2},d_n)$, we have $d(c_j,X) \leq d(c_j,c_1)$ for all $j \in \{k+2,\ldots,n\}$. Now the set $S$ is not a 2-solid-resolving set of $J_n$ since it does not satisfy \eqref{eq:superb} for $c_1$ and $X$.

Consequently, $C$ contains at least four elements of $S$.
\end{proof}

\begin{thm}\label{thm:FS2solid}
Let $n\geq 5$ be an odd integer. We have $\beta_2^s (J_n) = n+5$.
\end{thm}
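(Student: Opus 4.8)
The plan is to prove the two inequalities $\beta_2^s(J_n)\ge n+5$ and $\beta_2^s(J_n)\le n+5$ separately: the lower bound assembles the three lemmas already proved, and the upper bound exhibits an explicit set of size $n+5$ and checks it against the criterion of Theorem~\ref{thm:superb}.

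For the lower bound, let $S$ be any $2$-solid-resolving set. First I would record the local constraint from Lemma~\ref{lem:borthree}: since $S$ must separate the size-$2$ set $B=\{b_{i-1},b_{i+1}\}$ from each of the size-$3$ sets $X,Y,Z$, the intersections $S\cap\{a_i,b_i\}$, $S\cap\{c_i,b_i\}$, $S\cap\{d_i,b_i\}$ are all nonempty, so for every $i$ either $b_i\in S$ or $\{a_i,c_i,d_i\}\subseteq S$. Writing $\alpha=|S\cap A|$, $\gamma=|S\cap C|$, and letting $P$ be the number of indices with $b_i\notin S$, the $b$-vertices contribute $n-P$, so that $|S|=n+(\alpha+\gamma-P)$; moreover the local constraint forces $\alpha\ge P$ and $\gamma\ge 2P$, while Lemmas~\ref{lem:Aatleast3} and~\ref{lem:Catleast4} give $\alpha\ge 3$ and $\gamma\ge 4$. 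A two-line case split then finishes it: if $P\le 2$ then $\alpha+\gamma-P\ge 3+4-P\ge 5$, and if $P\ge 3$ then $\alpha+\gamma-P\ge P+2P-P=2P\ge 6$. Hence $|S|\ge n+5$, with equality only possible when $P=2$, $\alpha=3$, $\gamma=4$.

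For the upper bound I would exhibit the set
\[
S=\{b_i : i\neq 1,\,k+2\}\cup\{a_1,a_{k+1},a_{k+2},c_1,c_{k+2},d_1,d_{k+2}\},
\]
which has exactly $(n-2)+7=n+5$ elements and is the set drawn in Figure~\ref{fig:FS2s}, and prove it is $2$-solid-resolving by verifying the condition of Theorem~\ref{thm:superb}: for all $x\in V$ and all $Y\subseteq V$ with $x\notin Y$ and $|Y|\le 2$ one must exhibit $s\in S$ with $d(s,x)<d(s,Y)$. I would organise the verification by the location of $x$. If $x=b_i$ with $i\neq 1,k+2$, the landmark $s=x\in S$ works at once. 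If $x=b_1$ or $x=b_{k+2}$, the three neighbours $a_i,c_i,d_i$ all lie in $S$ at distance $1$ from $x$ and their punctured closed neighbourhoods are pairwise disjoint, so the at most two vertices of $Y$ block at most two of them, leaving a usable landmark. The remaining cases, where $x$ is a leaf $a_i$, $c_i$, or $d_i$, are the core: here I would separate $Y$ lying near $x$ using the adjacent $b$-landmarks, and fall back on the global landmarks $a_1,a_{k+1},a_{k+2}$ (when $x\in A$) and $c_1,c_{k+2},d_1,d_{k+2}$ (when $x\in C$) to handle vertices of $Y$ far from $x$.

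The main obstacle is precisely this last block of leaf cases. The difficulty is twofold: the cycle $C$ has length $2n$ with the two ``twists'' joining $c_n$ to $d_1$ and $d_n$ to $c_1$, so distances along $C$ behave exceptionally near the indices $1$ and $k+2$ (exactly the phenomenon flagged for $c_1,c_{k+2}$ in the text), and since a single landmark never resolves a cycle, one must confirm that the triple $a_1,a_{k+1},a_{k+2}$ and the two pairs $c_1,c_{k+2}$ and $d_1,d_{k+2}$ are placed asymmetrically enough to separate every leaf $x$ from every ``antipodal-looking'' pair $Y$. I expect this to reduce to a finite, if tedious, piece of distance bookkeeping rather than any genuinely new idea; its payoff is that the extremal configuration $P=2$, $\alpha=3$, $\gamma=4$ from the lower-bound count is realised exactly, yielding $\beta_2^s(J_n)=n+5$.
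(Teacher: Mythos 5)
Your lower bound is complete and correct, and it is essentially the paper's argument: the per-index dichotomy (either $b_i \in S$ or $\{a_i,c_i,d_i\} \subseteq S$) extracted from Lemma~\ref{lem:borthree}, combined with Lemmas~\ref{lem:Aatleast3} and~\ref{lem:Catleast4}; your explicit count $|S| = n + (\alpha + \gamma - P)$ with the split $P \le 2$ versus $P \ge 3$ is a clean (arguably cleaner) rendering of the paper's proof by contradiction. Your upper-bound set is exactly the paper's set from Figure~\ref{fig:FS2s}, and your treatment of $x = b_1, b_{k+2}$ (three landmarks of $S$ at distance $1$ whose punctured closed neighbourhoods are pairwise disjoint, so a blocking $Y$ would need $|Y| \ge 3$) is precisely the paper's argument.

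The genuine gap is the leaf cases $x = a_i, c_i, d_i$, which you defer as ``a finite, if tedious, piece of distance bookkeeping.'' This is where the paper does almost all of its work, and the deferral mischaracterises the task: $n$ is unbounded, so no finite check suffices; a uniform argument is required, and your sketch does not contain its key idea. That idea is that every leaf outside $S$ lies on a \emph{unique shortest path} between two landmarks of $S$, after which the triangle inequality eliminates all competitors. Concretely, for $x = a_i$ with $2 \le i \le k$: if $Y \cap T_i = \emptyset$ then $b_i$ separates; otherwise every vertex of $Y \cap T_i \subseteq \{b_i, c_i, d_i\}$ is strictly farther than $a_i$ from both $a_1$ and $a_{k+1}$, and if the remaining vertex $y \in Y \setminus T_i$ satisfied both $d(a_1,y) \le d(a_1,a_i)$ and $d(a_{k+1},y) \le d(a_{k+1},a_i)$, then $d(a_1,y) + d(y,a_{k+1}) \le d(a_1,a_{k+1})$ would place $y$ on the unique shortest path $a_1 a_2 \cdots a_{k+1}$, forcing $y = a_j$ with both $j \le i$ and $j \ge i$, i.e.\ $y = a_i \notin Y$, a contradiction; hence $a_1$ or $a_{k+1}$ gives the strict inequality required by \eqref{eq:superb}. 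The cases $x = c_i, d_i$ run identically using the four unique shortest paths $c_1$--$c_{k+2}$, $c_{k+2}$--$d_1$, $d_1$--$d_{k+2}$ and $d_{k+2}$--$c_1$ along the outer cycle, whose uniqueness must itself be verified (e.g.\ $d(c_1,c_{k+2}) = k+1$ along the cycle versus at least $k+4$ through the hub vertices $b_j$ or the inner cycle). Until this block is written out, your upper bound is a plan rather than a proof.
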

\begin{proof}
$\beta_2^s (J_n) \geq n+5$: Assume that $S$ is a 2-solid-resolving set of $J_n$ with at most $n+4$ elements. Recall that according to Lemma \ref{lem:borthree} we have either $b_i \in S$ or $\{a_i,c_i,d_i\} \subseteq S$ for all $i \in \{1,\ldots,n\}$. 
According to Lemma \ref{lem:Catleast4} the set $S$ contains at least four elements of $C$. Since $|S| \leq n+4$, the set $S$ has exactly four elements of $C$ due to Lemma \ref{lem:borthree}(i). Now, if $c_i \notin S$ or $d_i \notin S$, then $b_i \in S$ and $a_i \notin S$ since otherwise $S$ would have more than $n+4$ elements. If $c_i$ and $d_i$ are both in $S$, we have either $b_i \in S$ or $a_i \in S$. Since $S$ contains four elements of $C$, there can be at most two elements $a_i$ in $S$. Now, according to Lemma \ref{lem:Aatleast3} the set $S$ is not a 2-solid-resolving set of $J_n$.

$\beta_2^s (J_n) \leq n+5$: Let $$S = \{ a_1,c_1,d_1,a_{k+1},a_{k+2},c_{k+2},d_{k+2} \} \cup \{ b_i \ | \ i \in \{1,\ldots,n\}, i \neq 1,k+2 \}.$$ See Figure \ref{fig:FS2s} for an example of this set.
We have $|S| = 7 + n-2 = n+5$. We will show that $S$ satisfies \eqref{eq:superb} for $\ell = 2$, and is thus a 2-solid-resolving set of $J_n$. Clearly, for all $s \in S$ and $X \subseteq V$ such that $s \notin X$ we have $d(s,s) < d(s,X)$. Consider then the vertices that are not in $S$. We divide the study by the types of the vertices in $J_n$.
\begin{itemize}
 \item[$a_i:$] Assume that $2 \leq i \leq k$, the other case where $k+3 \leq i \leq n$ goes similarly. Since $a_i \notin S$, we have $b_i \in S$. Let $X \subseteq V$, $|X| \leq 2$ and $a_i \notin X$. If $X \cap T_i = \emptyset$, then $d(b_i,a_i) < d(b_i,X)$. Assume then that $X \cap T_i \neq \emptyset$. Observe that $d(a_1,a_i) < d(a_1, X \cap T_i)$ and $d(a_{k+1},a_i) < d(a_{k+1}, X \cap T_i)$. If $X \subseteq T_i$, then $d(a_1,a_i) < d(a_1,X)$ and $d(a_{k+1},a_i) < d(a_{k+1},X)$. Suppose then that $|X \cap T_i| = 1$ and $x \in X \setminus T_i$. If $d(a_1,x) \leq d(a_1,a_i)$ and $d(a_{k+1},x) \leq d(a_{k+1},a_i)$, then $d(a_1,a_{k+1}) \leq d(a_1,x) + d(x,a_{k+1}) \leq d(a_1,a_i) + d(a_i,a_{k+1})$. Since the path $a_1a_2 \ldots a_{k+1}$ is the unique shortest path between $a_1$ and $a_{k+1}$, we have $x=a_j$ for some $j \in \{1, \ldots, k+1\}$, $ j\neq i$. Consequently, either $d(a_1,a_i) < d(a_1,x)$ or $d(a_{k+1},a_i) < d(a_{k+1},x)$. Thus, either $d(a_1,a_i) < d(a_1,X)$ or $d(a_{k+1},a_i) < d(a_{k+1},X)$.
 
 \item[$b_i:$] Since $b_i \notin S$, either $i=1$ or $i=k+2$. Consider the case where $i = 1$ (the case where $i=k+2$ goes similarly). Let $X \subseteq V$, $|X| \leq 2$ and $b_1 \notin X$. If $S$ does not satisfy \eqref{eq:superb}, then $d(a_1,X)$, $d(c_1,X)$ and $d(d_1,X)$ are all at most 1. However, now each of the sets $\{a_1,a_2,a_n\}$, $\{c_1,c_2,d_n\}$ and $\{d_1,d_2,c_n\}$ must contain at least one element of $X$. Since these sets do not intersect, the set $X$ has at least three elements, a contradiction.
 
 \item[$c_i,d_i:$] Consider the vertex $c_i$ where $2 \leq i \leq k+1$ (the other cases go similarly). Let $X \subseteq V$, $|X| \leq 2$ and $c_i \notin X$. Assume that $d(s,X) \leq d(s,c_i)$ for all $s \in S$. Since $d(b_i,X) \leq d(b_i,c_i)$, we have $X \cap \{b_i,d_i,a_i\} \neq \emptyset$. However, for all $v \in \{ b_i, d_i,a_i\}$ we have $d(c_1,c_i) < d(c_1,v)$ and $d(c_{k+2},c_i) < d(c_{k+2},v)$. Thus, $X$ must have an element $x$ such that $d(c_1,x) \leq d(c_1,c_i)$ and $d(c_{k+2},x) \leq d(c_{k+2},c_i)$.
 
 The path $c_1c_2 \ldots c_{k+1}c_{k+2}$ is the unique shortest path between $c_1$ and $c_{k+2}$. Naturally, for all $c_j$, where $j\neq i$, we have either $d(c_1,c_i) < d(c_1,c_j)$ or $d(c_{k+2},c_i) < d(c_{k+2},c_j)$. For all other vertices $v \notin \{ c_1, \ldots , c_{k+2} \}$, we have $d(c_1,v) + d(v,c_{k+2}) > d(c_1, c_{k+2}) = d(c_1,c_i) + d(c_i,c_{k+2})$, and thus $d(c_1,c_i) < d(c_1,v)$ or $d(c_{k+2},c_i) < d(c_{k+2},v)$. Thus, there is no such vertex $x$ that $d(c_1,x) \leq d(c_1,c_i)$ and $d(c_{k+2},x) \leq d(c_{k+2},c_i)$.
\end{itemize}
\end{proof}

\subsection{The $\{2\}$-Metric Dimension of $J_n$}

As we have seen in the two previous sections, the $\{3\}$- and 2-solid-metric dimensions of $J_n$ are dependent on $n$. However, we will see in Theorem \ref{thm:FS2rsub} that the $\{2\}$-metric dimension is at most eight for any $J_n$.

Our computer calculations have shown that $\beta_2 (J_5) = 7$, and $S = \{ a_1,a_3,b_2,b_4,c_1,c_3,d_1 \}$, for example, is a $\{2\}$-metric basis of $J_5$. Our calculations have also shown that $\beta_2 (J_n) = 8$ when $7 \leq n \leq 19$. We will prove the upper bound $\beta_2 (J_n) \leq 8$ in the following theorem, and we conjecture that the lower bound $\beta_2 (J_n) \geq 8$ holds for all $n \geq 7$.

The proof of the following theorem is surprisingly difficult with traditional methods of comparing distance arrays. To show the upper bound $\beta_2 (J_n) \leq 8$ we will construct a $\{2\}$-resolving set of $J_n$ with eight elements. We have verified with a computer that the set we provide is indeed a $\{2\}$-resolving set of $J_n$ when $7 \leq n \leq 19$. To show the claim for $n \geq 21$ we use a reduction-like approach. We will show that if the set was not a $\{2\}$-resolving set of $J_n$ then it would not be a $\{2\}$-resolving set of $J_{n-2}$. The idea behind the proof is that if we carefully remove two stars $T_i$ from $J_n$ and add necessary edges (for example, in Figure \ref{fig:sets}, we can remove the star $T_i$ and connect the stars $T_{i-1}$ and $T_{i+1}$), we obtain $J_{n-2}$, and the distances in $J_n$ and $J_{n-2}$ are highly dependent on each other.

\begin{thm}\label{thm:FS2rsub}
Let $n = 2k+1 \geq 7$. We have $\beta_2 (J_n) \leq 8$.
\end{thm}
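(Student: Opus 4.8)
The plan is to exhibit a single eight-vertex set and prove that it $\{2\}$-resolves every $J_n$ with $n$ odd and $n\ge 7$; since $\beta_2(J_n)$ is then at most $8$, the bound follows. Guided by Figure~\ref{fig:FS2}, I take two ``clusters'' sitting at the near-antipodal indices $1$ and $k+2$, namely
\[
S=\{a_1,a_2,c_1,d_1\}\cup\{a_{k+1},a_{k+2},c_{k+2},d_{k+2}\},
\]
so that $S$ meets only the four stars $T_1,T_2,T_{k+1},T_{k+2}$ and, as a configuration, respects the reflection of $J_n$ interchanging the two poles. The verification splits according to $n$. For the finitely many small cases $7\le n\le 19$ I would rely on the stated exhaustive computer check that $S$ is $\{2\}$-resolving. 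The whole content is therefore the passage to arbitrarily large $n$, which I would handle by downward induction: assuming $S$ is $\{2\}$-resolving in $J_{n-2}$, show it is $\{2\}$-resolving in $J_n$ for every odd $n\ge 21$ (so that $J_{n-2}$ eventually lands among the verified base cases).

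For the inductive step I would argue by contraposition, following the reduction sketched around Figure~\ref{fig:sets}. Suppose $S$ fails to $\{2\}$-resolve $J_n$; then there are distinct nonempty $X,Y$ with $|X|,|Y|\le 2$ and $\D_S(X)=\D_S(Y)$. The union $X\cup Y$ meets at most four stars, and $S$ meets four stars, so at most eight of the cyclically arranged indices $1,\dots,n$ are ``occupied''. Since $n\ge 21$ leaves at least thirteen free indices distributed in at most eight gaps, the pigeonhole principle yields two cyclically consecutive free stars $T_j,T_{j+1}$, and there is enough freedom to place them so that, after deletion, the surviving stars relabel to $J_{n-2}$ with its canonical copy of $S$. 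Concretely I would delete $T_j$ and $T_{j+1}$, reconnect $a_{j-1}a_{j+2}$ on the inner cycle and $c_{j-1}c_{j+2}$, $d_{j-1}d_{j+2}$ on the cycle $C$ (iterating the single-star operation of Figure~\ref{fig:sets}), and apply the resulting automorphism carrying $S$ in $J_n$ to $S$ in $J_{n-2}$ and $X,Y$ to $X',Y'$ with $X'\ne Y'$ and $|X'|,|Y'|\le 2$.

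The heart of the proof is then a distance-transfer lemma: for vertices $u,v$ surviving the deletion, $d_{J_{n-2}}(u',v')=d_{J_n}(u,v)$ whenever some shortest $u$--$v$ path avoids the deleted region, and in general the deletion only shortens the distance. I would use this to show that the eight entries of $\D_S(X)$ and of $\D_S(Y)$ are reproduced verbatim by $X'$ and $Y'$ in $J_{n-2}$: because $T_j,T_{j+1}$ are chosen off the shortest routes joining the two clusters to the vertices of $X\cup Y$ (or, when this cannot be arranged, because the affected entries are shortened by the same amount for $X$ and $Y$, their nearest witnesses lying on the same side of the deleted arc), one obtains $\D_S(X')=\D_S(Y')$ in $J_{n-2}$. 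This contradicts the induction hypothesis, completing the step and hence the theorem.

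The main obstacle is precisely this distance-transfer lemma, and within it the wrap-around distances governed by the special vertices $c_1$ and $c_{k+2}$. Passing from $n$ to $n-2$ changes $k$ to $k-1$, so the antipodal point along $C$ shifts and the unique ``long way round'' shortest paths (of the type $c_1c_2\cdots c_{k+2}$ noted after the Construction) are reindexed; one must check that this reindexing does not silently alter a distance from an element of $S$ to an element of $X\cup Y$ in a way that breaks the equality $\D_S(X)=\D_S(Y)$. Guaranteeing simultaneously that the deleted pair can be placed off every relevant shortest path \emph{and} that the relabeling realigns $S$ with its canonical copy in $J_{n-2}$ is the delicate combinatorial bookkeeping; it is exactly here that the largeness of $n$ (hence $k\ge 10$) together with the residual computer verification for $7\le n\le 19$ carry the argument.
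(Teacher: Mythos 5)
Your overall strategy matches the paper's: the same eight-vertex set $S=\{a_1,a_2,c_1,d_1\}\cup\{a_{k+1},a_{k+2},c_{k+2},d_{k+2}\}$, computer verification for $7\le n\le 19$, and an induction whose step shows that if $S$ works in $J_{n-2}$ then it works in $J_n$ ($n\ge 21$). However, the step as you describe it contains a genuine error. Your pigeonhole argument produces two \emph{cyclically consecutive} free stars $T_j,T_{j+1}$, and since $S$ occupies the stars $T_1,T_2,T_{k+1},T_{k+2}$, any two adjacent free stars lie in the \emph{same} arc between the two clusters. Deleting them shrinks one arc by two, turning the arc-length pattern $(k-2,k-1)$ of $J_n$ into $(k-4,k-1)$ or $(k-2,k-3)$, whereas the canonical copy of $S$ in $J_{n-2}$ (where $l=k-1$) has pattern $(l-2,l-1)=(k-3,k-2)$. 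These configurations are not equivalent under any automorphism of $J_{n-2}$: the length of the arc flanked by the two ``extra'' vertices $a_2$ and $a_{k+1}$ is an invariant, and it is $k-2$ (or $k-4$) in your deleted graph but $k-3$ in the canonical one. So there is no relabeling ``carrying $S$ in $J_n$ to $S$ in $J_{n-2}$,'' and the induction hypothesis — which concerns only the canonical $S$ — cannot be invoked. The paper avoids this exactly by always deleting \emph{one star from each arc}: in its first case two stars halfway between the clusters on opposite sides, in its second case $T_3$ and $T_n$, which keeps the image of $S$ canonical since $(k-2,k-1)\mapsto(k-3,k-2)$.

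A second, smaller issue is that your distance-transfer lemma is stated but not proved precisely where the real difficulty sits. The parenthetical fallback — ``the affected entries are shortened by the same amount for $X$ and $Y$'' — is the whole problem, since deleting stars can shorten a wrap-around shortest path from one cluster to a far vertex by a different amount than to a near vertex. The paper resolves this with a concrete dichotomy driven by the distance arrays themselves: either $\D_{S_I}^n(X)$ has an entry at most $2$ (forcing $X\cap I'\neq\emptyset$, so each cluster's entries are governed by a nearby element and are untouched by a deletion halfway between the clusters), or all its entries are at least $3$ (forcing $X\cap I=\emptyset$, whereupon deleting $T_3$ and $T_n$ subtracts exactly $(1,1,1,1)$ from the $S_I$-entries and leaves the $S_J$-entries unchanged). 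Your proposal needs both repairs — delete one star per arc, and replace the hand-waved transfer claim by such a case analysis — before the inductive step goes through.
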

\begin{proof}
Denote
\begin{align*}
& I = T_n \cup T_1 \cup T_2 \cup T_3, &&
J = T_k \cup T_{k+1} \cup T_{k+2} \cup T_{k+3}, \\
& I' = I \cup T_{n-1} \cup T_4, &&
J' = J \cup T_{k-1} \cup T_{k+4}, \\
& S_I = \{ a_1,c_1,d_1,a_2 \}, &&
S_J = \{ a_{k+1},a_{k+2},c_{k+2},d_{k+2} \}.
\end{align*}
Let $S = S_I \cup S_J$ (see Figure \ref{fig:FS2}). We will show that the set $S$ is a $\{2\}$-resolving set of $J_n$. It is easy to check with a computer that the set $S$ is a $\{2\}$-resolving set when $7\leq n \leq 19$. 

Assume to the contrary that the set $S$ is not a $\{2\}$-resolving set of $J_n$, where $n\geq 21$, and that the set $S$ is a $\{2\}$-resolving set of $J_{n-2}$. We denote the distance arrays in $J_n$ by $\D_S^n$ and the distance arrays in $J_{n-2}$ by $\D_S^{n-2}$. Consider nonempty sets $X,Y \subseteq V(J_n)$ such that $|X| \leq 2$, $|Y| \leq 2$, $X \neq Y$ and $\D_S^n(X) = \D_S^n (Y)$. It is easy to see that if $\D_{S_I}^n (X)$ contains at least one distance that is at most 2, then we have $X \cap I' \neq \emptyset$. Furthermore, if all distances in $\D_{S_I}^n (X)$ are at least 3, then we have $X \cap I = \emptyset$. The same holds for $S_J$, $J$ and $J'$ by symmetry. 

If both $\D_{S_I}^n (X)$ and $\D_{S_J}^n (X)$ contain at least one distance that is at most 2, we have $X \cap I' \neq \emptyset$ and $X \cap J' \neq \emptyset$. Since $\D_S^n (X) = \D_S^n (Y)$, we have $Y \cap I' \neq \emptyset$ and $Y \cap J' \neq \emptyset$. We may think of $J_{n-2}$ as being obtained from $J_n$ by removing two stars from opposite sides of $J_n$ such that they are halfway between $I$ and $J$. Let $X',Y' \subseteq V(J_{n-2})$ consist of vertices that are in exactly the same positions as the elements of $X$ and $Y$ with respect to $S_I$ and $S_J$. Since $n-2 \geq 19$, we have $d_{J_{n-2}}(s,v) \leq 5 \leq k-4 \leq d_{J_{n-2}}(s,u)$ for all $s \in S_I$, $v \in I'$ and $u \in J'$ (sim. for $s \in S_J$, $v \in J'$ and $u \in I'$). Thus, $\D_{S_I}^{n-2} (X' \cap I') = \D_{S_I}^{n-2} (X')$ and $\D_{S_J}^{n-2} (X' \cap J') = \D_{S_J}^{n-2} (X')$, and the same also holds for $Y'$. Now we have $\D_{S}^{n-2} (X') = \D_S^n(X)= \D_S^n (Y) = \D_S^{n-2} (Y')$. However, $X\neq Y$ implies that $X' \neq Y'$, and since $S$ is a $\{2\}$-resolving set of $J_{n-2}$, we must have $\D_S^{n-2} (X') \neq \D_S^{n-2} (Y')$, a contradiction.

Assume then that all distances in $\D_{S_I}^n (X)$ are at least 3 (the case where this holds for $\D_{S_J}^n(X)$ goes similarly). Now, we have $X \cap I = \emptyset$ and $Y \cap I = \emptyset$. We may think of $J_{n-2}$ as being obtained from $J_n$ by removing the stars $T_3$ and $T_n$. Let $X',Y' \subseteq V(J_{n-2})$ consist of vertices that are in exactly the same positions as the elements of $X$ and $Y$ with respect to $S_J$. Now, we have $(X' \cup Y') \cap (T_1 \cup T_2) = \emptyset$, and thus
\begin{align*}
& \D_{S_I}^{n-2} (X') = \D_{S_I}^n (X) - (1,1,1,1), &&
\D_{S_J}^{n-2} (X') = \D_{S_J}^n (X), \\
& \D_{S_I}^{n-2} (Y') = \D_{S_I}^n (Y) - (1,1,1,1), &&
\D_{S_J}^{n-2} (Y') = \D_{S_J}^n (Y).
\end{align*}
Consequently, $\D_S^{n-2} (X') = \D_S^{n-2} (Y')$ if and only if $\D_S^{n} (X)=\D_S^{n} (Y)$. Since $S$ is a $\{2\}$-resolving set of $J_{n-2}$ and $X' \neq Y'$, we have $\D_S^{n-2} (X') \neq \D_S^{n-2} (Y')$, a contradiction.
\end{proof}

\subsection{The 1-Solid-Metric Dimension of $J_n$}

We begin the section by giving an upper bound on $\beta_1^s (J_n)$ for all $n\geq 5$.

\begin{thm}\label{thm:FS1sub}
 Let $n=2k+1 \geq 5$. We have $\beta_1^s (J_n) \leq 6$.
\end{thm}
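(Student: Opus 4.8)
The plan is to prove the bound constructively: exhibit a single six-element set and verify that it is $1$-solid-resolving for every odd $n \geq 5$. Guided by Figure~\ref{fig:FS1s}, I would take
\[
S = S_I \cup S_J, \qquad S_I = \{ a_1, c_1, d_1 \}, \qquad S_J = \{ a_{k+2}, c_{k+1}, d_{k+1} \},
\]
so that $S_I$ sits on the leaves of $T_1$ and $S_J$ sits roughly antipodally near $T_{k+1}$ and $T_{k+2}$; on the length-$2n$ cycle $c_1\cdots c_n d_1\cdots d_n c_1$ the four vertices $c_1,d_1,c_{k+1},d_{k+1}$ occupy two antipodal pairs that are almost evenly spaced, while $a_1$ and $a_{k+2}$ are near-antipodal on the length-$n$ cycle $a_1\cdots a_n$. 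By Theorem~\ref{thm:superb} with $\ell = 1$, the set $S$ is $1$-solid-resolving if and only if for every pair of distinct vertices $x,y$ there is some $s \in S$ with $d(s,x) < d(s,y)$; equivalently, there is no ordered pair of distinct vertices $x,y$ with $d(s,x) \geq d(s,y)$ for all $s \in S$. The whole task thus reduces to ruling out such a \emph{dominating} pair $(x,y)$.

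Since this is the $\ell=1$ analogue of the $\{2\}$-metric dimension problem, I would use the computer-aided reduction-like scheme of Theorem~\ref{thm:FS2rsub}. First I would verify by computer that $S$ admits no dominating pair in $J_n$ for a finite range of small odd $n$. Then, for large $n$, I would argue by contradiction: suppose $S$ has a dominating pair $(x,y)$ in $J_n$ while $S$ is already known to be $1$-solid-resolving in $J_{n-2}$. The crucial structural fact, just as in Theorem~\ref{thm:FS2rsub}, is that distances to the two clusters localize the vertices: a small coordinate in $\D_{S_I}(v)$ forces $v$ into the region $I'$ surrounding $T_1$, a small coordinate in $\D_{S_J}(v)$ forces $v$ into the region $J'$ around $T_{k+1},T_{k+2}$, and if all coordinates from a cluster are large then $v$ lies far from that cluster. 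Because $(x,y)$ is dominating, $y$ is at least as close to every witness as $x$, so $x$ and $y$ are localized into the same regime.

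This yields the same two-case split as Theorem~\ref{thm:FS2rsub}. When the pair is detected by both clusters, I would pass to $J_{n-2}$ by deleting two stars lying halfway between the clusters (one from each of the two arcs separating $T_1$ from $T_{k+1}$), placing $x',y'$ in the identical positions relative to $S_I$ and $S_J$; for $n$ beyond a computable threshold every relevant distance to $S$ is bounded by a fixed constant strictly below $k$, so each inequality $d(s,x) \geq d(s,y)$ is inherited verbatim by $x',y'$ in $J_{n-2}$. When all coordinates from one cluster are large, $x$ and $y$ avoid that cluster's stars entirely, and I would delete two stars adjacent to it: this shifts the corresponding coordinates of $\D_S$ by a common additive constant and leaves the other coordinates unchanged, again transferring the domination. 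In either case $x' \neq y'$ is a dominating pair in $J_{n-2}$, contradicting that $S$ is $1$-solid-resolving there; hence no dominating pair exists in $J_n$ and $S$ satisfies \eqref{eq:superb}.

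The main obstacle, exactly as in the $\{2\}$-case, is the bookkeeping of distances under star deletion so that the domination relation $d(s,x)\ge d(s,y)$ transfers cleanly. Two points need care. First, the threshold on $n$ and the constant bounding the ``near'' distances must be chosen large enough that the ``near one cluster'' and ``near the other cluster'' regimes are cleanly separated, so that the localization of $x$ and $y$ is unambiguous. Second, the wrap-around shortest paths on the length-$2n$ cycle near the antipodal witnesses (the special behaviour already flagged for $c_1$ and $c_{k+2}$) make the distance functions piecewise, and one must check that they do not disturb the additive shift used in the reduction. Once these are pinned down, the reduction together with the finite computer check gives $\beta_1^s(J_n) \leq 6$ for all odd $n \geq 5$.
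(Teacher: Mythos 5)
Your choice of $S$ coincides with the paper's set, and invoking Theorem~\ref{thm:superb} to reduce the claim to the nonexistence of a pair $x \neq y$ with $d(s,x) \geq d(s,y)$ for all $s \in S$ is correct. The gap lies in transplanting the case split of Theorem~\ref{thm:FS2rsub}: that argument hinges on the \emph{symmetric} hypothesis $\D_S^n(X) = \D_S^n(Y)$, which localizes $X$ and $Y$ identically, whereas your hypothesis is the asymmetric domination $d(s,x) \geq d(s,y)$. Domination propagates nearness only from $x$ to $y$ and farness only from $y$ to $x$; it does not put $x$ and $y$ ``into the same regime''. Concretely, if every coordinate of $\D_{S_I}(x)$ is large, nothing in the hypothesis prevents $y$ from lying inside $T_1$ itself, so in your second case the claim that ``$x$ and $y$ avoid that cluster's stars entirely'' is false for $y$. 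The bookkeeping then collapses: deleting the two stars flanking $T_1$ shifts $x$'s $S_I$-coordinates by $-1$ while leaving $y$'s unchanged, so any coordinate with $d(s,x) = d(s,y)$ (which domination permits) becomes $d(s,x') < d(s,y')$ in $J_{n-2}$, the dominating pair does \emph{not} transfer, and no contradiction is reached. Ruling out such configurations (e.g.\ showing that $y \in T_1$ together with $x$ far from $T_1$ is incompatible with domination at $S_J$) is genuine work that the proposal does not contain. A second, related gap: since the pair ranges over \emph{all} of $V(J_n)$, you cannot in general choose the two deleted stars to avoid both $x$ and $y$, and a vertex lying in or beside a deleted star has no ``identical position'' in $J_{n-2}$. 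This is precisely why the paper's reduction proof of the \emph{lower} bound (Theorem~\ref{thm:FS1solid}) needs the surjection $\alpha$ collapsing two stars onto one and a lengthy case analysis over the positions of the pair; repairing your argument would essentially mean reproducing that machinery.

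For comparison, the paper proves this upper bound with no computer and no reduction: it verifies \eqref{eq:superb} directly using shortest-path uniqueness. Every $c_i$ and $d_i$ lies on one of the four \emph{unique} shortest paths $c_1$--$c_{k+1}$, $c_{k+1}$--$d_1$, $d_1$--$d_{k+1}$, $d_{k+1}$--$c_1$, so no $y \neq c_i$ can be weakly closer to both endpoints; and for $a_i$ (resp.\ $b_i$), being on shortest paths from $a_1$ to both $c_{k+1}$ and $d_{k+1}$ pins any candidate $y$ down to the single vertex $b_{i-1}$ (resp.\ $a_{i+1}$), which is then separated by $a_{k+2}$ (resp.\ $c_1$). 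That one-page direct argument is what your reduction scheme would have to replace, so even a fully repaired version of your approach would be the harder road.
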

\begin{proof}
Let $S = \{ a_1,a_{k+2},c_1,d_1,c_{k+1},d_{k+1} \}$ (see Figure \ref{fig:FS1s}). We will show that the set $S$ is a 1-solid-resolving set of $J_n$ by proving that $S$ satisfies \eqref{eq:superb}. We divide the proof by the types of the vertices of $J_n$.
\begin{itemize}
 \item[$a_i$:] 
 Assume that $i \in \{2,\ldots,k+1\}$. The vertex $a_i$ is along some shortest path from $a_1$ to $c_{k+1}$. If there exists a vertex $v \in V\setminus \{a_i\}$ such that $d(a_1,v)\leq d(a_1,a_i)$ and $d(c_{k+1},v) \leq d(c_{k+1},a_i)$, then $v$ is also along a shortest path from $a_1$ to $c_{k+1}$. Moreover, we have $d(a_1,v) = d(a_1,a_i)$ and $d(c_{k+1},v) = d(c_{k+1},a_i)$, and thus $v \in \{b_{i-1},c_{i-2}\}$. Similarly, if $d(d_{k+1},v) \leq d(d_{k+1},a_i)$, then $v \in \{b_{i-1},d_{i-2}\}$. Thus, we have $v = b_{i-1}$. However, we clearly have $d(a_{k+2},a_i) < d(a_{k+2},b_{i-1})$. Thus, \eqref{eq:superb} is satisfied for all $a_i$ where $i \in \{2,\ldots , k+1\}$. The case where $i \in \{k+3, \ldots, n\}$ goes similarly (look at the shortest paths from $a_{k+2}$ to $c_1$ and $d_1$).

 \item[$b_i$:] 
 Assume that $i \in \{1,\ldots , k+1\}$. By the argument above, the only vertex $v \in V \setminus \{b_i\}$ that is at the same distance from $a_1$, $c_{k+1}$ and $d_{k+1}$ as $b_i$ is $a_{i+1}$. However, since $d(c_1,b_i) < d(c_1,a_{i+1})$ for all $i \in \{1, \ldots , k+1\}$, the condition \eqref{eq:superb} is satisfied for all $b_i$ where $i \in \{1, \ldots , k+1\}$. Similarly, we can prove that \eqref{eq:superb} holds for all $b_i$ where $i \in \{k+2, \ldots , n\}$ by looking at the shortest paths from $a_{k+2}$ to $c_1$ and $d_1$.

 \item[$c_i$, $d_i$:] Each $c_i$ and $d_i$ is along one of the four unique shortest paths: $c_1-c_{k+1}$, $c_{k+1}-d_1$, $d_1-d_{k+1}$ and $d_{k+1}-c_1$. Thus, \eqref{eq:superb} is satisfied for all $c_i$ and $d_i$.
\end{itemize}
\end{proof}

Let $P$ be a shortest path between $u$ and $v$ in $J_n$. We denote $\rho_n (u,v) = t -1$, where $t$ is the number of stars that intersect with $P$. Thus, $\rho_n (u,v)$ is the distance $P$ traverses in order to get from the star that contains $u$ to the star that contains $v$. The distance $d(u,v)$ could now be written as $d(u,v) = \rho_n (u,v) + r$, where $r$ is the distance that $P$ traverses inside the stars that contain $u$ and $v$.

To determine the exact 1-solid-metric dimension of $J_n$ we still need to prove the lower bound $\beta_1^s (J_n) \geq 6$. Computer calculations have shown this lower bound to hold for $5 \leq n \leq 39$. 
The idea behind the proof of the following theorem is to prove that if for some $J_n$ we have $\beta_1^s (J_n) \leq 5$, then we also have $\beta_1^s (J_{n-2}) \leq 5$. 
To that end, we assume that the set $S$, $|S| = 5$, is a 1-solid-resolving set of $J_n$. We then construct $J_{n-2}$ from $J_n$ by removing the stars $T_1$ and $T_{k+1}$ and adding necessary edges (see Figure \ref{fig:FSreduction}). As long as the stars close to the stars that were removed did not contain any elements of $S$ the distances from the elements of $S$ to other vertices behave well and predictably after the removal of the two stars. Then we can construct a 1-solid-resolving set of $J_{n-2}$ from $S$, and we reach a contradiction to the lower bound shown with a computer.

\begin{thm}\label{thm:FS1solid}
Let $n = 2k+1 \geq 5$. We have $\beta_1^s (J_n) = 6$.
\end{thm}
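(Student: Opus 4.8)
By Theorem~\ref{thm:FS1sub} we already have $\beta_1^s(J_n)\le 6$, so the plan is to establish the matching lower bound $\beta_1^s(J_n)\ge 6$. Since this lower bound has been verified by computer for $5\le n\le 39$, it suffices to prove the reduction step: if $n=2k+1\ge 41$ and $\beta_1^s(J_n)\le 5$, then $\beta_1^s(J_{n-2})\le 5$. Iterating this implication drives any hypothetical $1$-solid-resolving set of size $5$ down to a flower snark in the computer-checked range, producing a contradiction and forcing $\beta_1^s(J_n)\ge 6$ for every odd $n\ge 5$.

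To carry out the reduction I would assume that $S\subseteq V(J_n)$ with $|S|=5$ is a $1$-solid-resolving set of $J_n$, and first locate a clean place to cut. Since the five vertices of $S$ meet at most five of the $n\ge 41$ stars, the occupied stars split the cyclic arrangement of stars into at most five gaps of consecutive $S$-free stars, of total length at least $n-5\ge 36$; hence some gap contains at least eight consecutive $S$-free stars, leaving ample room to delete two stars together with a buffer of $S$-free stars on either side. I would remove two such buffered stars (the sketch uses the representative labels $T_1$ and $T_{k+1}$, as drawn in Figure~\ref{fig:FSreduction}) and reconnect the severed ends of the $a$-cycle and of the $c$--$d$ cycle. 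A short check shows the result is again a flower snark: each surviving rung $b_l$ still joins two antipodal vertices of the shortened $c$--$d$ cycle, because each deleted star places one of its two $c$--$d$ cycle-vertices on each of the two arcs into which $b_l$ splits that cycle. Relabelling the surviving stars identifies this graph with $J_{n-2}$ and carries $S$ to a set $S'$ with $|S'|\le 5$.

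It then remains to show that $S'$ is $1$-solid-resolving for $J_{n-2}$, for which I would use the characterisation of Theorem~\ref{thm:superb} with $\ell=1$: for every $x'\ne y'$ there must be some $s'\in S'$ with $d_{n-2}(s',x')<d_{n-2}(s',y')$. Arguing by contraposition, I would lift a hypothetical violating pair $x',y'$ back to vertices $x,y$ of $J_n$ occupying the same positions relative to the sensors, and show that $d_n(s,x)\ge d_n(s,y)$ for every $s\in S$, contradicting \eqref{eq:superb} for $S$ in $J_n$. The engine of this step is a distance-transfer lemma: writing $d(u,v)=\rho_n(u,v)+r$, where the internal contribution $r$ is untouched by the surgery, I would show that because the deleted stars sit inside an $S$-free buffer, the star-traversal distances $\rho_n(s,\cdot)$ from each sensor transform in a uniform, predictable way as two stars are reinserted (either preserved, or shifted by a common constant), so that each comparison of $d(s,x)$ against $d(s,y)$ survives the passage between $J_n$ and $J_{n-2}$.

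The main obstacle is precisely this distance bookkeeping. Shortening the $c$--$d$ cycle can decrease a given sensor-to-vertex distance by $0$, $1$, or $2$ depending on how the corresponding shortest path runs relative to the excised region, and one must rule out the dangerous case in which $d(s,x)$ and $d(s,y)$ change by different amounts so that a strict inequality degenerates into an equality. This is exactly what the $S$-free buffer is designed to prevent: with every sensor kept several stars away from the cut, the shortest paths a sensor uses to compare $x$ and $y$ either both avoid the excised region or both traverse it in the same way. The delicate part is handling the twisted shortest paths near $c_1$ and $c_{k+2}$, the only place where $\rho_n$ is not the naive cyclic distance between stars; it is for the resulting finite collection of boundary configurations that the computer verification of the base cases $5\le n\le 39$ is invoked to close the argument.
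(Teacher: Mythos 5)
Your overall architecture coincides with the paper's: the upper bound from Theorem~\ref{thm:FS1sub}, computer verification of the lower bound for $5\le n\le 39$, and a reduction showing that a $5$-element $1$-solid-resolving set of $J_n$ ($n\ge 41$) would yield one of $J_{n-2}$. However, the heart of the argument --- your ``distance-transfer lemma'' --- is false as stated, and your choice of cut is what breaks it. If the two deleted stars lie inside a \emph{single} run of $S$-free stars, then for a fixed sensor $s$ the difference $d_n(s,v)-d_{n-2}(s',v')$ (where $s',v'$ denote the images after the surgery) is not a constant in $v$: it equals $0$, $1$ or $2$ according to whether the shortest $s$--$v$ route avoids the excised region, is tied with the route through it, or traverses it. Concretely, take $x$ whose shortest path from $s$ avoids the cut (distance unchanged) and $y$ just beyond the cut whose shortest path traverses it (distance drops by $2$); if $d_n(s,x)=d_n(s,y)-1$, then after the surgery $d_{n-2}(s',x')>d_{n-2}(s',y')$, so a sensor witnessing \eqref{eq:superb} for this pair in $J_n$ ceases to witness it in $J_{n-2}$, which is exactly the failure your lemma is supposed to exclude. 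The $S$-free buffer does not prevent this: it constrains where the sensors sit, not where $x$ and $y$ sit nor how their shortest paths route around the cycle. There is also an internal inconsistency: your pigeonhole produces eight \emph{consecutive} $S$-free stars in one gap, yet you then invoke the antipodal pair $T_1,T_{k+1}$ of Figure~\ref{fig:FSreduction}; an $S$-free buffer around each of two antipodal stars cannot be extracted from your gap count, and the paper needs a different count (over the $n$ rotations of an $8$-star pattern, each sensor blocking $8$ of them, whence $n>40$) to obtain it.

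The antipodal choice is not cosmetic: removing one star from each half makes every distance change by $0$ or $1$ depending only on which half the target lies in, which is what renders the bookkeeping tractable. Even then, a strict inequality can still collapse to an equality when $x$ and $y$ lie in different halves, and ruling this out is the bulk of the paper's proof --- the case analysis over which stars contain $x$ and $y$, culminating in the construction of the auxiliary vertex $z$ and the counting estimates $d_m(r,x)+d_m(u,x)\le l$ versus $d_m(r,y)+d_m(u,y)\ge l+5$. Your plan outsources precisely this part to ``boundary configurations'' settled by the computer verification; that is a category error, since the computation for $n\le 39$ is the base of the induction and cannot assist in the inductive step for $n\ge 41$, where infinitely many such configurations arise. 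So both the key transfer lemma and any treatment of its hard cases are missing from the proposal.
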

\begin{proof}
Due to Theorem \ref{thm:FS1sub}, it suffices to show the lower bound $\beta_1^s (J_n) \geq 6$. We showed this lower bound for $n\leq 39$ by an exhaustive search with a computer. To prove the claim for all $n\geq 41$ we will show that if for some $n\geq 41$ we have $\beta_1^s (J_n) \leq 5$, then we also have $\beta_1^s (J_{n-2})\leq 5$.

Let $n = 2k+1 \geq 41$ and let $S$ be a 1-solid-resolving set of $J_n$ such that $|S| = 5$. Throughout the proof, we will refer to Figure \ref{fig:FSreduction}, where the flower snarks are smaller than what the proof requires for technical reasons. Consider the set $\{T_i \ | \ i \in \{1,2,k-1,k,k+1,k+2,n-1,n\}\}$ (illustrated with a gray background in Figure \ref{fig:FSreduction} for $J_{21}$) and its isomorphic images. There are $n$ such sets and each $s \in S$ is in eight of these sets. Since $|S| = 5$, at least one of these sets does not contain any elements of $S$ if $n > 8 \cdot 5 = 40$. Since $n\geq 41$, we can assume that the stars $T_i$ where $i \in \{ 1,2,k-1,k,k+1,k+2,n-1,n \}$ do not contain elements of $S$. Let $m = n-2 = 2l+1$. We denote by $T_i^n$ a star in $J_n$ and by $T_i^m$ a star in $J_m$.

Let $\alpha : V(J_n) \rightarrow V(J_m)$ be a surjection such that 
\begin{align*}
\alpha (x_i) = \left\lbrace \begin{array}{ll}
x_1, & \text{if } i = n, \\
x_i, & \text{if } i \in \{ 1, \ldots, k \}, \\
x_{i-1}, & \text{otherwise,}
\end{array} \right.
\end{align*}
where $x_i \in \{ a_i,b_i,c_i,d_i \}$. The image of $x_i$ is the same type as $x_i$, that is, if $x_i = a_i$, then $\alpha (x_i) = a_j$ for some $j \in \{ 1, \ldots, m \}$ and similarly for $x_i = b_i,c_i,d_i$. The preimages of $x_1$ and $x_{l+1}$ are $\alpha^{-1} (x_1) = \{ x_1,x_n \}$ and $\alpha^{-1} (x_{l+1}) = \{ x_k,x_{k+1} \}$ (illustrated as black vertices in Figure \ref{fig:FSreduction}). For all other $x_i \in V(J_m)$ the preimages are unique.

\begin{figure}
\centering
 \begin{tikzpicture}[scale=1]
  \def\nn{21}; 
  \def\da{1.85}; 
  \def\dg{17.14}; 
  \def\dgf{90+\dg}; 
  \def\db{\da+.4}; 
  \def\dc{\db+.4}; 
  \def\ofs{4.5}; 
  \def\loos{1.3};
  \foreach \x in {1,...,\nn} \coordinate (a\x) at (\dgf-\x*\dg:\da);
  \foreach \x in {1,...,\nn} \coordinate (b\x) at (\dgf-\x*\dg:\db);
  \foreach \x in {1,...,\nn} \coordinate (c\x) at (\dgf-\x*\dg+\ofs:\dc);
  \foreach \x in {1,...,\nn} \coordinate (d\x) at (\dgf-\x*\dg-\ofs:\dc);
  \fill[fill=gray!30] (132:\dc+0.45) to[out=38,in=163,looseness=1] (65:\dc+0.45) 
  					-- (65:\da-0.2) to[out=163,in=38,looseness=1] (132:\da-0.2);
  \fill[fill=gray!30] (254:\dc+0.45) to[out=345,in=230,looseness=1] (322:\dc+0.45) 
  					-- (322:\da-0.2) to[out=230,in=345,looseness=1] (254:\da-0.2);
  \draw \foreach \x in {1,...,\nn} \foreach \y in {a,c,d} {
  		(b\x) -- (\y\x) };
  \draw (a1) -- (a2) -- (a3) -- (a4) -- (a5) -- (a6) -- (a7) -- (a8) -- (a9) -- (a10) -- (a11)
  			 -- (a12) -- (a13) -- (a14) -- (a15)  -- (a16) -- (a17) -- (a18) -- (a19) -- (a20) -- (a21) -- (a1);
  \draw {
  		(c1) to[out=70,in=90,looseness=\loos] (c2) to[out=73,in=73,looseness=\loos] (c3)
  		to[out=56,in=56,looseness=\loos] (c4) to[out=39,in=39,looseness=\loos] (c5)
  		to[out=21,in=21,looseness=\loos] (c6) to[out=4,in=4,looseness=\loos] (c7)
  		to[out=347,in=347,looseness=\loos] (c8) to[out=330,in=330,looseness=\loos] (c9)
  		to[out=313,in=313,looseness=\loos] (c10) to[out=295,in=295,looseness=\loos] (c11)
  		to[out=278,in=278,looseness=\loos] (c12) to[out=261,in=261,looseness=\loos] (c13)
  		to[out=244,in=244,looseness=\loos] (c14) to[out=227,in=227,looseness=\loos] (c15)
  		to[out=210,in=210,looseness=\loos] (c16) to[out=192,in=192,looseness=\loos] (c17)
  		to[out=175,in=175,looseness=\loos] (c18) to[out=158,in=158,looseness=\loos] (c19)
  		to[out=141,in=141,looseness=\loos] (c20) to[out=124,in=124,looseness=\loos] (c21)
  		to[out=70,in=130,looseness=\loos]
  		(d1) to[out=70,in=90,looseness=\loos] (d2) to[out=73,in=73,looseness=\loos] (d3)
  		to[out=56,in=56,looseness=\loos] (d4) to[out=39,in=39,looseness=\loos] (d5)
  		to[out=21,in=21,looseness=\loos] (d6) to[out=4,in=4,looseness=\loos] (d7)
  		to[out=347,in=347,looseness=\loos] (d8) to[out=330,in=330,looseness=\loos] (d9)
  		to[out=313,in=313,looseness=\loos] (d10) to[out=295,in=295,looseness=\loos] (d11)
  		to[out=278,in=278,looseness=\loos] (d12) to[out=261,in=261,looseness=\loos] (d13)
  		to[out=244,in=244,looseness=\loos] (d14) to[out=227,in=227,looseness=\loos] (d15)
  		to[out=210,in=210,looseness=\loos] (d16) to[out=192,in=192,looseness=\loos] (d17)
  		to[out=175,in=175,looseness=\loos] (d18) to[out=158,in=158,looseness=\loos] (d19)
  		to[out=141,in=141,looseness=\loos] (d20) to[out=124,in=124,looseness=\loos] (d21)
  		-- (c1)
  };
  \draw \foreach \x in {1,...,\nn} \foreach \y in {a,b,c,d} {
		(\y\x) node[circle, draw, fill=white, inner sep=0pt, minimum width=4pt] {} };
  \draw \foreach \x in {1,21,10,11} \foreach \y in {a,b,c,d} {
		(\y\x) node[circle, draw, fill=black, inner sep=0pt, minimum width=4pt] {} };
  \draw \foreach \x in {(c3),(a6),(d8),(d18),(d19)}{
		\x node[circle, draw, fill=gray!80, inner sep=0pt, minimum width=4pt] {} };
  \draw {
  		(60:\dc+0.5) node[] {$s$}
  		(7:\da-0.3) node[] {$x'$}
  		(328:\dc+0.5) node[] {$t$}
  		(137:\dc+0.5) node[] {$z$}
  		(154:\dc+0.5) node[] {$y'$}
  };
 \draw (140:\dc+1.5) node[] {$J_{21}:$};
 \draw[->] (0:\dc+1) -- (0:\dc+1.8);
 \draw (3:\dc+1.4) node[] {$\alpha$};
 \draw {
 		(90:\dc+0.6) node[] {$T_1$}
 		(107:\dc+0.65) node[] {$T_n$}
 		(282:\dc+0.7) node[] {$T_{k+1}$}
 		(297:\dc+0.7) node[] {$T_k$}
 };
\begin{scope}[shift={(8,0)}]
  \def\nn{19}; 
  \def\da{1.8}; 
  \def\dg{18.95}; 
  \def\dgf{90+\dg}; 
  \def\db{\da+.4}; 
  \def\dc{\db+.4}; 
  \def\ofs{5}; 
  \def\loos{1.3};
  \foreach \x in {1,...,\nn} \coordinate (a\x) at (\dgf-\x*\dg:\da);
  \foreach \x in {1,...,\nn} \coordinate (b\x) at (\dgf-\x*\dg:\db);
  \foreach \x in {1,...,\nn} \coordinate (c\x) at (\dgf-\x*\dg+\ofs:\dc);
  \foreach \x in {1,...,\nn} \coordinate (d\x) at (\dgf-\x*\dg-\ofs:\dc);
  \fill[fill=gray!30] (117:\dc+0.45) to[out=26,in=158,looseness=1] (63:\dc+0.45) 
  					-- (63:\da-0.2) to[out=158,in=26,looseness=1] (117:\da-0.2);
  \fill[fill=gray!30] (250:\dc+0.45) to[out=345,in=220,looseness=1] (308:\dc+0.45) 
  					-- (308:\da-0.2) to[out=220,in=345,looseness=1] (250:\da-0.2);
  \draw[dashed] (80.5:\dc+0.8) -- (80.5:\da-0.4) -- (289:\da-0.4) -- (289:\dc+0.8); 
  \draw[dashed] (99.5:\dc+0.8) -- (99.5:\da-0.4) -- (270:\da-0.4) -- (270:\dc+0.8); 
  \draw \foreach \x in {1,...,\nn} \foreach \y in {a,c,d} {
  		(b\x) -- (\y\x) };
  \draw (a1) -- (a2) -- (a3) -- (a4) -- (a5) -- (a6) -- (a7) -- (a8) -- (a9) -- (a10) -- (a11)
  			 -- (a12) -- (a13) -- (a14) -- (a15)  -- (a16) -- (a17) -- (a18) -- (a19) -- (a1);
  \draw {
  		(c1) to[out=70,in=90,looseness=\loos] (c2) to[out=71,in=71,looseness=\loos] (c3)
  		to[out=52,in=52,looseness=\loos] (c4) to[out=33,in=33,looseness=\loos] (c5)
  		to[out=14,in=14,looseness=\loos] (c6) to[out=355,in=355,looseness=\loos] (c7)
  		to[out=336,in=336,looseness=\loos] (c8) to[out=317,in=317,looseness=\loos] (c9)
  		to[out=298,in=298,looseness=\loos] (c10) to[out=279,in=279,looseness=\loos] (c11)
  		to[out=260,in=260,looseness=\loos] (c12) to[out=241,in=241,looseness=\loos] (c13)
  		to[out=222,in=222,looseness=\loos] (c14) to[out=203,in=203,looseness=\loos] (c15)
  		to[out=184,in=184,looseness=\loos] (c16) to[out=165,in=165,looseness=\loos] (c17)
  		to[out=146,in=146,looseness=\loos] (c18) to[out=127,in=127,looseness=\loos] (c19)
  		to[out=65,in=135,looseness=\loos]
  		(d1) to[out=70,in=90,looseness=\loos] (d2) to[out=71,in=71,looseness=\loos] (d3)
  		to[out=52,in=52,looseness=\loos] (d4) to[out=33,in=33,looseness=\loos] (d5)
  		to[out=14,in=14,looseness=\loos] (d6) to[out=355,in=355,looseness=\loos] (d7)
  		to[out=336,in=336,looseness=\loos] (d8) to[out=317,in=317,looseness=\loos] (d9)
  		to[out=298,in=298,looseness=\loos] (d10) to[out=279,in=279,looseness=\loos] (d11)
  		to[out=260,in=260,looseness=\loos] (d12) to[out=241,in=241,looseness=\loos] (d13)
  		to[out=222,in=222,looseness=\loos] (d14) to[out=203,in=203,looseness=\loos] (d15)
  		to[out=184,in=184,looseness=\loos] (d16) to[out=165,in=165,looseness=\loos] (d17)
  		to[out=146,in=146,looseness=\loos] (d18) to[out=127,in=127,looseness=\loos] (d19)
  		-- (c1)
  };
  \draw \foreach \x in {1,...,\nn} \foreach \y in {a,b,c,d} {
		(\y\x) node[circle, draw, fill=white, inner sep=0pt, minimum width=4pt] {} };
  \draw \foreach \x in {1,10} \foreach \y in {a,b,c,d} {
		(\y\x) node[circle, draw, fill=black, inner sep=0pt, minimum width=4pt] {} };
  \draw \foreach \x in {(c3),(a6),(d8),(d17)}{
		\x node[circle, draw, fill=gray!80, inner sep=0pt, minimum width=4pt] {} };
  \draw {
  		(57:\dc+0.5) node[] {$r$}
  		(355:\da-0.3) node[] {$x$}
  		(315:\dc+0.5) node[] {$u$}
  		(143:\dc+0.5) node[] {$y$}
  };
  \draw (140:\dc+1.5) node[] {$J_{19}:$};
  \draw (0:0.8) node[] {$I$};
  \draw (0:-0.5) node[] {$J$};
  \draw {
  		(90:\dc+0.65) node[] {$T_1$}
  		(280:\dc+0.7) node[] {$T_{l+1}$}
  };
 \end{scope}
 \end{tikzpicture}
\caption{An example of the last case of the proof of Theorem \ref{thm:FS1solid} where $n=21$ and $m=19$.}\label{fig:FSreduction}
\end{figure}
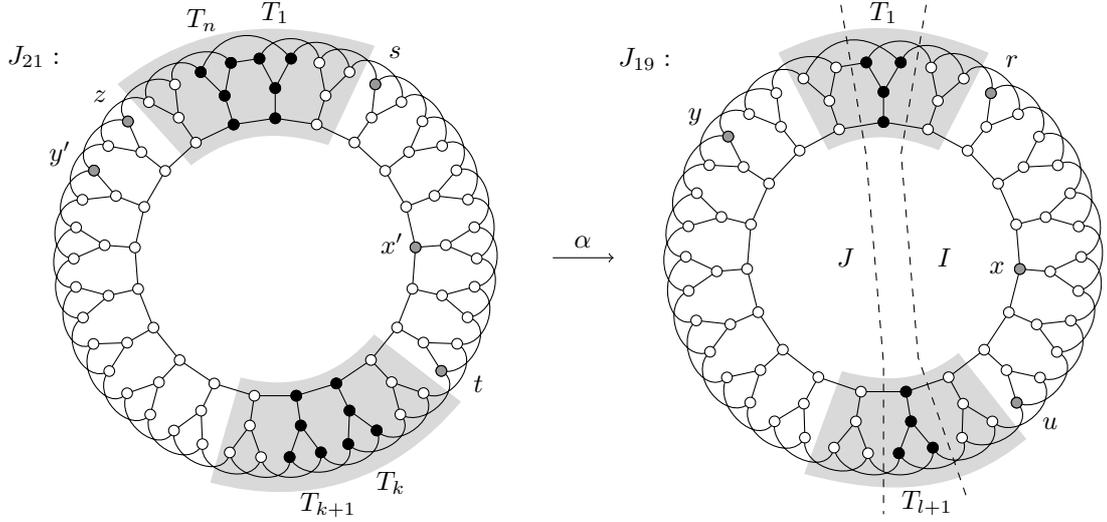

Let $R = \{ \alpha (s) \ | \ s \in S \}$. Since the stars $T_i^n$, where $i \in \{ 1,2,k-1,k,k+1,k+2,n-1,n \}$, do not contain elements of $S$, the vertices $\alpha(T_i^n)$ in $J_m$ are not in $R$, and $|R| = |S|$. In other words, the stars $T_j^m$, where $j \in \{ 1,2,l,l+1,l+2,m \}$, do not contain elements of $R$. We denote 
\begin{align*}
I = \bigcup\limits_{i=2}^{l} T_i^m \qquad \text{and} \qquad J = \bigcup\limits_{i=l+2}^{m} T_i^m
\end{align*}
(see Figure \ref{fig:FSreduction}). Let $r \in R$ and $v \in V(J_m)$. Let $s=\alpha^{-1} (r)$ and $v'$ be a preimage of $v$. Since the stars $T_i^m$ where $i \in \{ 1,2,l,l+1,l+2,m \}$ do not contain elements of $R$, the shortest paths from $r$ to $v$ are closely related to the shortest paths from $s$ to $v'$. 

Let us denote by $d_n$ and $d_m$ the distances in $J_n$ and $J_m$, respectively. If $r,v \in I$ or $r,v \in J$, then we have $d_m(r,v) = d_n(s,v')$. If $r \in I$ and $v \in J$, or $r \in J$ and $v \in I$, then $d_m(r,v) = d_n(s,v')-1$. If $v \in T_1^m$ or $v \in T_{l+1}^m$, then $d_m(r,v) = d_n(s,v')$ or $d_m(r,v) = d_n(s,v')-1$ depending on which of the preimages of $v$ the vertex $v'$ is. Indeed, let $\alpha^{-1} (v) = \{x_1, x_n\}$. If $r \in I$, then $d_m(r,v) = d_n(s,x_1) = d_n(s,x_n) - 1$. If $r \in J$, then $d_m(r,v) = d_n(s,x_n) = d_n(s,x_1) - 1$.


Let $x,y \in V(J_m)$ be distinct. In what follows, we will show that the set $R$ satisfies \eqref{eq:superb}. Suppose first that $x,y \in T_1^m$ or $x,y \in T_{l+1}^m$. Due to symmetry, it suffices to show that when $x,y \in T_1^m$ there exists an element $r \in R$ such that $d_m(r,x) < d_m(r,y)$. Let $x'$ and $y'$ be the preimages of $x$ and $y$ that are in $T_1^n$. Since $S$ is a 1-solid-resolving set of $J_n$, there exists some $s \in S$ such that $d_n(s,x') < d_n(s,y')$. Now, we have $d_m(\alpha (s), x) = d_n(s,x')$ if and only if $d_m(\alpha (s), y) = d_n(s,y')$. Consequently, $d_m(\alpha (s), x) < d_m(\alpha (s), y)$.


Suppose then that $x \in T_1^m$ and $y \in T_{l+1}^m$ (the case where $x \in T_{l+1}^m$ and $y \in T_1^m$ goes similarly). Assume to the contrary that there does not exist any $r \in R$ such that $d_m(r,x) < d_m(r,y)$. We have $d_m(r,x) \geq d_m(r,y)$ for all $r \in R$. Let $x_1$ and $x_n$ be the preimages of $x$ that are in the stars $T_1^n$ and $T_n^n$, respectively. Similarly, let $y_k$ and $y_{k+1}$ be the preimages of $y$ in the stars $T_k^n$ and $T_{k+1}^n$, respectively. Let $s \in S$. If $\alpha (s) \in I$, then we have $d_n(s,x_1) = d_m(\alpha (s),x) \geq d_m(\alpha (s),y) = d_n(s,y_k)$. Since $d_n(s, x_n) = d_n(s,x_1) + 1$ and $d_n(s,y_{k+1}) = d_n(s,y_k) + 1$, we have $d_n(s,x_n) \geq d_n(s,y_{k+1})$. If $\alpha (s) \in J$, then $d_n(s,x_n) = d_m(\alpha (s), x) \geq d_m(\alpha (s),y) = d_n(s,y_{k+1})$. Thus, we have $d_n(s,x_n) \geq d_n(s,y_{k+1})$ for all $s\in S$, a contradiction. Therefore, there must exist some $r \in R$ such that $d_m(r,x) < d_m(r,y)$. 


Suppose that $x \in T_1^m \cup T_{l+1}^m$ and $y \notin T_1^m \cup T_{l+1}^m$. Assume that $x \in T_1^m$ (the case where $x \in T_{l+1}^m$ follows by symmetry). We denote $y' = \alpha^{-1} (y)$ and $\alpha^{-1} (x) = \{x_1,x_n\}$, where $x_1 \in T_1^n$ and $x_n \in T_n^n$. Suppose that $y \in I$ (the case where $y \in J$ goes similarly). Assume to the contrary that $d_m(v,x) \geq d_m(v,y)$ for all $v \in R$. Let $r,u \in R$ be such that $r \in I$ and $u \in J$. We denote $s = \alpha^{-1} (r)$ and $t = \alpha^{-1} (u)$. Now we have $d_n(s,y') = d_m(r,y)$, $d_n(t,y') = d_m(u,y) + 1$, $d_m(r,x) = d_n(s,x_1)$ and $d_m(u,x) = d_n(t,x_1)-1$. Since $d_m(v,x) \geq d_m(v,y)$ for all $v \in R$, we have $d_n(s,x_1) = d_m(r,x) \geq d_m(r,y) = d_n(s,y')$ and $d_n(t,x_1) = d_m(u,x) + 1 \geq d_m(u,y) + 1 = d_n(t,y')$. Thus, for all $v' \in S$ we have $d_n(v',x_1) \geq d_n(v',y')$ and the set $S$ does not satisfy \eqref{eq:superb}, a contradiction. Thus, for some $v \in R$ we have $d_m(v,x) < d_m(v,y)$. Similarly, if $d_m(v,y) \geq d_m(v,x)$ for all $v \in R$, then $d_n(s,y') = d_m(r,y) \geq d_m(r,x) = d_n(s,x_1)$ and $d_n(t,y') = d_m(u,y) + 1 \geq d_m(u,x) + 1 = d_n(t,x_1)$. Consequently, for all $v' \in S$ we have $d_n(v',y') \geq d_n(v',x_1)$ and the set $S$ does not satisfy \eqref{eq:superb}, a contradiction. Thus, we also have $d_m(v,y) < d_m(v,x)$ for some $v \in R$.


Finally, assume that $x,y \notin T_1^m \cup T_{l+1}^m$. Let us denote $x' = \alpha^{-1} (x)$ and $y' = \alpha^{-1} (y)$. Assume that $x,y \in I$. Let $s \in S$ be such that $d_n(s,x') < d_n(s,y')$. Denote $r = \alpha (s)$. If $r \in I$, then $d_m(r,x) = d_n(s,x')$ and $d_m(r,y) = d_n(s,y')$. If $r \in J$, then $d_m(r,x) = d_n(s,x') - 1$ and $d_m(r,y) = d_n(s,y') - 1$. In both cases we have $d_m(r,x) < d_m(r,y)$. 
Thus, the set $R$ satisfies \eqref{eq:superb} for any $x,y \in I$. The case where $x,y \in J$ goes similarly.


Suppose that $x \in I$ and $y \in J$. There is at least one star between the stars that contain $x$ and $y$. We have the following two cases

\begin{enumerate}
\item There is exactly one star between $x$ and $y$:

Since $x \in I$ and $y \in J$, the star between $x$ and $y$ is either $T_1^m$ or $T_{l+1}^m$. Thus, there are two stars between $x'$ and $y'$. Suppose that $T_1^m$ is the star between $x$ and $y$, and $x \in T_2^m$ and $y \in T_m^m$. We have $x' \in T_2^n$ and $y' \in T_{n-1}^n$.
Let $x_1 \in T_1^n$ and $y_n \in T_n^n$ be such that they are the same type as $x'$ and $y'$, respectively. By 'same type' we mean that if $x'=c_2$, for example, then $x_1 = c_1$. 
Since the stars $T_i^n$ where $i \in \{ 1,2,k-1,k,k+1,k+2,n-1,n \}$ do not contain any elements of $S$, the vertex $s\in S$ is on the same side as $x'$ (that is, $\alpha (s) \in I$) if and only if we have $d_n(s,x') < d_n(s,y')$ Similarly, $s$ is on the same side as $y'$ if and only if $d_n(s,y') < d_n(s,x')$.

Assume that for all $v \in R$ we have $d_m(v,x) \geq d_m(v,y)$. Since $S$ is a 1-solid-resolving set of $J_n$, there exist vertices $s,t \in S$ such that $d_n(s,x') < d_n(s,y')$ and $d_n(t,y') < d_n(t,x')$. According to our previous observation, $s$ is on the same side as $x'$ and $t$ is on the same side as $y'$. Denote $r = \alpha (s)$. Since $d_m(r,x) \geq d_m(r,y)$, we have $d_n(s,y') - 1 \geq d_n(s,x') = d_m(r,x) \geq d_m(r,y) = d_n(s,y')-1$. Consequently, $d_m(r,x) = d_m(r,y)$ and $d_n(s,y') = d_n(s,x') + 1$. Thus, we have $d_n(s,x_1) = d_n(s,x') + 1 = d_n(s,y')$. Since the stars $T_i^n$, where $i \in \{ 1,2,k-1,k,k+1,k+2,n-1,n \}$, do not contain any elements of $S$, all shortest paths from $t$ to $x_1$ go through the star that contains $y'$. Since the star $T_n^n$ is between $T_1^n$ and the star that contains $y'$, we have $d_n(t,y') \leq d_n(t,x_1)$. Thus, the set $S$ does not satisfy \eqref{eq:superb} for $x_1$ and $y'$, a contradiction. Similarly, if $d_m(v,y) \geq d_m(v,x)$ for all $v \in R$, the set $S$ does not satisfy \eqref{eq:superb} for $y_n$ and $x'$. 

\item There are at least two stars between $x$ and $y$:

Now, there are at least three stars between $x'$ and $y'$. Let $s \in S$ be such that $d_n(s,x') < d_n(s,y')$, and denote $r = \alpha (s)$. As $d_m(r,y) \geq d_n(s,y') - 1$, we have $d_m(r,x) \leq d_m(r,y)$. If $d_m(r,x) < d_m(r,y)$, then we are done. Suppose that $d_m(r,x) = d_m(r,y)$. We have $r \in I$ since otherwise $d_m(r,x) = d_n(s,x') - 1 < d_n(s,y') - 1 = d_m(r,y)-1$. Since $d_m(r,x) = d_n(s,x')$ and $d_m(r,y) = d_n(s,y') - 1$, we have $d_n(s,x') = d_n(s,y') -1$. Clearly, there does not exist a shortest path from $r$ to $x$ that goes through the star that contains $y$. Since there are at least two stars between $x$ and $y$, there does not exist a shortest path from $r$ to $y$ that goes through the star that contains $x$. Indeed, otherwise we would have $d_m(r,x) \leq \rho_m(r,x) + 2 < \rho_m(r,y) \leq d_m(r,y)$. Thus, the shortest paths $r-x$ and $r-y$ can coincide with each other only in the star that contains $r$.

Let $z \in V(J_n)$ be the unique vertex that is the same type as $y'$ (i.e. $a_i$, $b_i$, $c_i$ or $d_i$), is in a star next to $y'$ and for which $d_n(s,z) = d_n(s,x')$ holds (see Figure \ref{fig:FSreduction}). The vertex $z$ is indeed unique since the first two conditions reduce the options to two and the third condition uniquely determines $z$ as $n$ in odd. Since $S$ is a 1-solid-resolving set of $J_n$, there exists a $t \in S$ such that $d_n(t,x') < d_n(t,z)$. If the vertex $t$ is in the same star as $y'$ or $z$, then $\rho_n(t,x') \geq 3$ since $\rho_n (y',x') \geq 4$. However, now $d_n(t,z) \leq 3 \leq \rho_n(t,x') \leq d_n(t,x')$. Thus, $t$ is not in the same star as $y'$ or $z$, and we have 
\begin{align*}
& d_n(t,y')-1 \leq d_n(t,z) \leq d_n(t,y') + 1, \\
& d_n(t,z)-1 \leq d_n(t,y') \leq d_n(t,z) + 1.
\end{align*}
If $d_n(t,y') < d_n(t,x')$, then $d_n(t,z) \leq d_n(t,y')+1 \leq d_n(t,x')$, a contradiction. Thus, we have $d_n(t,y') \geq d_n(t,x')$.

We denote $u = \alpha (t)$. If $u \in J$, then $d_m(u,y) = d_n(t,y')$ and $d_m(u,x) = d_n(t,x') -1$. Consequently, $d_m(u,x) \leq d_n(t,y') -1 < d_m(u,y)$ and \eqref{eq:superb} is satisfied for $x$ and $y$.

Suppose then that $u \in I$. Now, $d_m(u,y) = d_n(t,y') -1$ and $d_m(u,x) = d_n(t,x')$. If $d_m(u,x) < d_m(u,y)$, then \eqref{eq:superb} is again satisfied. Assume that $d_m(u,x) \geq d_m(u,y)$. Since there are at least two stars between $x$ and $y$, a shortest path $u-y$ cannot go through the star that contains $x$. Consequently, there is no shortest path $t-y'$ that goes through the star that contains $x'$. If there is a shortest path $t-y'$ that goes through the star that contains $z$, then we have $d_n(t,y') = d_n(t,z) + 1$ and
\begin{align*}
d_m(u,y) = d_n(t,y') - 1 = d_n(t,z) > d_n(t,x') = d_m(u,x).
\end{align*}
Thus, $u$ satisfies \eqref{eq:superb} for $x$ and $y$.
Suppose then that there is no shortest path $t-y'$ that goes through the star that contains $z$. The shortest paths $t-y'$ and $s-y'$ can coincide only in the star that contains $y'$. Consequently, the shortest paths $u-y$ and $r-y$ in $J_m$ can coincide only in the star that contains $y$. As we have seen before, the shortest paths $r-x$ and $r-y$ can coincide only in the star that contains $r$, and the shortest paths $u-y$ do not go through the star that contains $x$. Thus, the shortest paths $u-y$, $u-x$, $r-x$ and $r-y$ can coincide only in the stars that contain $x$, $y$, $r$ or $u$ (see Figure \ref{fig:FSreduction} for an example of this situation). Consequently, we have 
\begin{align*}
& d_m(r,x) + d_m(u,x) \leq \rho_m(r,x) + 2 + \rho_m(u,x) + 2 = \rho_m(r,u) + 4 \leq l-4 + 4 = l, \\
& d_m(r,y) + d_m(u,y) \geq \rho_m(r,y) + \rho_m(u,y) = m - \rho_m(r,u) \geq l+5.
\end{align*}
Thus, $d_m(r,x) + d_m(u,x) < d_m(r,y) + d_m(u,y)$. Since $d_m(r,x) = d_m(r,y)$, we have $d_m(u,x) < d_m(u,y)$.
Using similar arguments we can show that there exists some $u' \in R$ such that $d_m(u',y) < d_m(u',x)$.
\end{enumerate}
\end{proof}

\end{document}